\numberwithin{equation}{section}
\def\@tocline#1#2#3#4#5#6#7{\relax
  \ifnum #1>\c@tocdepth 
  \else
    \par \addpenalty\@secpenalty\addvspace{#2}%
    \begingroup \hyphenpenalty\@M
    \@ifempty{#4}{%
      \@tempdima\csname r@tocindent\number#1\endcsname\relax
    }{%
      \@tempdima#4\relax
    }%
    \parindent\z@ \leftskip#3\relax \advance\leftskip\@tempdima\relax
    \rightskip\@pnumwidth plus4em \parfillskip-\@pnumwidth
    #5\leavevmode\hskip-\@tempdima
      \ifcase #1
       \or\or \hskip 1em \or \hskip 2em \else \hskip 3em \fi%
      #6\nobreak\relax
      \dotfill
      \hbox to\@pnumwidth{\@tocpagenum{#7}}
    \par
    \nobreak
    \endgroup
  \fi}
\newcommand{\tr}[1]{\textnormal{Tr}\left[ #1 \right]}
\newcommand{\ps}[2]{\left\langle #1 , #2 \right\rangle}
\newcommand{\dgamma}[1]{\textnormal{d}\Gamma \left( #1 \right)}
\newcommand{\wick}[1]{:\!#1\!:}
\newcommand{\gF}{\mathfrak{F}}
\newcommand{\gH}{\mathfrak{H}}
\newcommand{\bT}{\mathbb{T}}
\newcommand{\cN}{\mathcal{N}}
\newcommand{\sym}{\mathrm{sym}}
\newcommand{\1}{\mathbb{1}}
\newtheorem{theorem}{Theorem}[section]
\newtheorem{proposition}[theorem]{Proposition}
\newtheorem{remark}[theorem]{Remark}
\newtheorem{lemma}[theorem]{Lemma}
\newtheorem{definition}[theorem]{Definition}
\numberwithin{equation}{section}
\numberwithin{figure}{section}
\title{$\Phi^4_2$ theory limit of a many-body bosonic free energy}
\author[L. Jougla]{Lucas Jougla}
\address{Constructor university Bremen}
\email{ljougla@constructor.university}
\author[N. Rougerie]{Nicolas Rougerie}
\address{Ecole Normale Sup\'erieure de Lyon \& CNRS,  UMPA (UMR 5669)}
\email{nicolas.rougerie@ens-lyon.fr}
\date{May 2026}
\begin{document}

\maketitle

\begin{abstract}
We consider the quantum Gibbs state of an interacting Bose gas on the 2D torus $\mathbb{T}^2$. We set temperature, chemical potential and coupling constant in a regime where classical field theory gives leading order asymptotics. In the same limit, the repulsive interaction potential is set to be short-range: it converges to a Dirac delta function with a rate depending polynomially on the other scaling parameters. We prove that the free-energy of the interacting Bose gas (counted relatively to the non-interacting one) converges to the free energy of the $\Phi^4_2$ non-linear Schr\"odinger-Gibbs measure, thereby revisiting recent results and streamlining proofs thereof. We combine the variational method of Lewin-Nam-Rougerie to connect, with controled error, the quantum free energy to a classical Hartree-Gibbs one with smeared non-linearity. The convergence of the latter to the $\Phi^4_2$ free energy then follows from arguments of Fr\"ohlich-Knowles-Schlein-Sohinger. This derivation parallels recent results of Nam-Zhu-Zhu.
\end{abstract}

\tableofcontents

\section{Introduction}
The $\Phi^4_2$ measure on the torus $\mathbb{T}^2$ is formally defined as
\begin{equation} \label{eq: phi42 def}
    \textnormal{d}\mu(u) = \frac{1}{\mathcal{Z}} \exp \left( -\int_{\mathbb{T}^2} \left(|\nabla u(x)|^2 + |u(x)|^2 + \frac{1}{2} |u(x)|^4 \right) \textnormal{d}x  \right) \textnormal{d}u \ .
\end{equation}
Defining rigorously such an object is non-trivial, for every relevant term in \eqref{eq: phi42 def} is infinite almost surely on the support of the measure. The precise definition (recalled below) goes back to Nelson~\cite{Nelson-66} and was a landmark of the constructive field theory program, see e.g.~\cite{GliJaf-87,Simon-74,Hairer-16} for review. \\

One can see the above measure as the formal limit when $\varepsilon \rightarrow 0^+$ of the nonlinear Gibbs measure
\begin{equation} \label{eq:nonlinear gibbs def}
    \textnormal{d}\mu^\varepsilon(u) = \frac{1}{\mathcal{Z}^\varepsilon} \exp \left( -\int_{\mathbb{T}^2} \left(|\nabla u(x)|^2 + |u(x)|^2\right)\textnormal{d}x - \frac{1}{2} \iint_{\mathbb{T}^2 \times \mathbb{T}^2} w^\varepsilon(x-y) |u(x)|^2|u(y)|^2\textnormal{d}x\textnormal{d}y  \right) \textnormal{d}u \ , \\
\end{equation}
where
\begin{equation}
    w^\varepsilon(x) = \frac{1}{\varepsilon^2}w\left(\frac{x}{\varepsilon}\right)
\end{equation}
is an approximation of the Dirac delta $\delta_0$, and $w$ is sufficiently nice. In the above definitions, $\mathcal{Z}$ and $\mathcal{Z}^\varepsilon$ are normalization constants.  This is also formal, for the same reasons  as in \eqref{eq: phi42 def}. The above measures can be defined as being absolutely continuous with respect to the Gaussian measure 
\begin{equation}
    \textnormal{d}\mu_0(u) = \frac{1}{\mathcal{Z}_0}\exp \left( -\int_{\mathbb{T}^2} \left(|\nabla u(x)|^2 + |u(x)|^2\right)\textnormal{d}x \right) \textnormal{d}u
\end{equation}
with covariance $h^{-1}$, on the appropriate Sobolev space. Here,  
$$h = - \Delta+1$$ 
on $\mathfrak{H} = L^2(\mathbb{T}^2)$, see e.g.~\cite[Lemma~5.1]{LewNamRou-20}. Due to the support properties of this free Gibbs measure, there are still infinite terms in~\eqref{eq:nonlinear gibbs def}, hence the interaction term has to be ``Wick-renormalized''. This is fairly classical, we refer to~\cite{FroKnoSchSoh-20,LewNamRou-20} for rigorous definitions in a context tailored for our needs below. After this Wick renormalization, one finds that the interaction term
\begin{equation}
    \textnormal{Int}_\varepsilon (u)= \frac{1}{2} \iint_{\mathbb{T}^2 \times \mathbb{T}^2} w^\varepsilon(x-y) |u(x)|^2|u(y)|^2\textnormal{d}x\textnormal{d}y 
\end{equation}
can be rigorously defined as the limit $K\to \infty$  (e.g. in $L^2(\textnormal{d}\mu_0)$) of its truncated  counterpart
\begin{equation}
    \frac{1}{2}\iint_{\mathbb{T}^2 \times \mathbb{T}^2} w^\varepsilon(x-y) \wick{|P_Ku(x)|^2}\wick{|P_Ku(y)|^2} \textnormal{d}x\textnormal{d}y - \tau^\varepsilon \int_{\mathbb{T}^2}\wick{|P_Ku(x)|^2} \textnormal{d}x - E^\varepsilon \ , \\
\end{equation}
where $P_K$ is the orthogonal projection onto the first $K$ eigenmodes of $h$, 
$$\wick{|P_Ku(x)|^2}= |P_Ku(x)|^2 - \left\langle |P_Ku(x)|^2 \right\rangle_{\mu_0} $$
with $\left\langle \, . \,  \right\rangle_{\mu_0}$ the expectation value in the gaussian measure and $\tau^\varepsilon, E^\varepsilon > 0$ are counterterms that come from the Wick ordering of the quartic term $|u(x)|^2|u(y)|^2$. Rigorously linking \eqref{eq:nonlinear gibbs def} and \eqref{eq: phi42 def} turns out to be highly  non trivial. See~\cite{FroKnoSchSoh-22} for our case of dimension $d=2$, and~\cite{NamZhuZhu-25} for another approach to $d=2$ that extends to $d=3$. We will take this derivation for granted in the sequel. \\

Our main goal is to derive the $\Phi^4_2$ measure as a ``limit'' in a suitable sense of a quantum Gibbs state describing an homogeneous system of bosons. We will focus for simplicity on the derivation of the free-energy, i.e. minus the logarithm of the constant normalising $\mu$ relatively to $\mu_0$. This has been obtained first in~\cite{FroKnoSchSoh-22}, in a limit where $\varepsilon \to 0$ very slowly (logarithmically) as a function of the other scaling parameters of the quantum theory (temperature, chemical potential, coupling constant). Adapting tools from~\cite{LewNamRou-20} and combining them with the study~\cite{FroKnoSchSoh-22} of the limit from~\eqref{eq:nonlinear gibbs def} to~\eqref{eq: phi42 def} we obtain the same result, but allowing a faster (polynomial) rate $\varepsilon \to 0$. The point here, even if we do not quite reach it in this paper, is that a physically natural rate would be to have $\varepsilon$ much smaller than the typical inter-particle distance (dilute regime, typical in cold atoms physics, see~\cite{Rougerie-EMS} for a general discussion). A polynomial rate of $\varepsilon \to 0$ has been simultaneously obtained in~\cite{NamZhuZhu-25} with related methods. The main focus of the latter paper is however the 3D case, which goes far beyond the scope of the present text. The derivation of non-linear Gibbs measures from many-body quantum mechanics is a relatively recent endeavor~\cite{AmmRat-21,FroKnoSchSoh-16,FroKnoSchSoh-17,FroKnoSchSoh-20,FroKnoSchSoh-20b,FroKnoSchSoh-22,DinRou-24,LewNamRou-15,LewNamRou-17,LewNamRou-18_2D,LewNamRou-20,Sohinger-22,RouSoh-22,RouSoh-23,NamLuZhu-26,NamZhuZhu-25,NamZhuZhu-26} and our hope is that the present text can help in explaining some of the relevent methods. \\


Our goal is thus to obtain the convergence -- in a sense that will be made precise later -- of the grand-canonical many-body quantum Gibbs state
\begin{equation} \label{eq:generic Gibbs state}
        \Gamma_\lambda = \frac{e^{-\lambda\mathbb{H}}}{\textnormal{Tr}_{\mathfrak{F}}\left[ e^{-\lambda\mathbb{H}} \right]}
\end{equation}
as $\lambda \rightarrow 0^+$, where 
$$\mathbb{H} = \bigoplus_{n\geq 0} H_{\lambda,\nu,n}$$
is given as the lifting to the bosonic Fock space 
$$ \gF:= \bigoplus_{n\geq 0} L^2_{\sym} \left(\bT^{2n}\right)$$
of the usual $n$-body Schr\"odinger Hamiltonian
\begin{equation} \label{eq: generic nbody Hamiltonian}
    H_{\lambda,\nu,n} = \sum_{j=1}^n \left( - \Delta_{x_j} + 1 - \nu \right) + \lambda \sum_{1 \leq j < \ell \leq n}w^\varepsilon (x_j - x_\ell) \ 
\end{equation}
 acting on $L^2_{\sym} \left(\bT^{2n}\right)$. Here, $-\Delta_{x_j}$ is the Laplacian acting on the $j^{\textnormal{th}}$ coordinate, $w^\varepsilon$ is a scaled two-body multiplication operator as above, and $\nu$ is the \textit{chemical potential}, which sets the average particle number and will be crucial in order to renormalize the theory. \\
 

We study the following interacting and non-interacting partition functions
\begin{equation}
    \begin{split}
        Z(\lambda) & = 1 + \sum_{n \geq 1}\textnormal{Tr} \left[e^{-\lambda H_{\lambda,\nu,n}} \right] = \textnormal{Tr}_{\mathfrak{F}}\left[ e^{-\lambda\mathbb{H}} \right]\\
        Z_0(\lambda) & = 1 + \sum_{n \geq 1}\textnormal{Tr} \left[e^{-\lambda H_{\lambda=0,\nu,n}} \right].
    \end{split}
\end{equation}
We will consider the ratio 
\begin{equation}
    \frac{Z(\lambda)}{Z_0(\lambda)}
\end{equation}
whose logarithm gives the \textit{relative free energy} of the quantum model. We prove that it converges, in the limit $\lambda \rightarrow 0, \varepsilon \to 0$, towards the relative partition function of the classical model, formally given by $\mathcal{Z}$ in \eqref{eq: phi42 def}. We generalize estimates from ~\cite{LewNamRou-20}, making them quantitative as a function of $\varepsilon$ in order to allow it to converge to $0$ as fast as possible. Again, an important challenge is to allow for a physically relevant $\varepsilon$, meaning smaller as the typical distance between the particles, i.e
\begin{equation} \label{eqcondition epsilon}
    \varepsilon \ll N^{-1/d} \mbox{ where } N:= \tr{\cN \Gamma_\lambda}
\end{equation}
with the particle number operator 
$$\cN = \bigoplus_{n\geq 0} n \1_{L^2_{\sym} \left(\bT^{2n}\right)}.$$
In fact we will manage to get $\varepsilon \sim N^{-1/24}$ essentially, with the other parameters of the Hamiltonian scaled as in~\eqref{eq:renorm param}-\eqref{eq:phys param}. This still brings us closer to the relevant regime~\eqref{eqcondition epsilon} than the previous work~\cite{FroKnoSchSoh-22}. A truly dilute regime however remains a challenging open problem.

    \bigskip
    
    \noindent \textbf{Acknowledgments.} After starting this project (which is the master memoir of the first author), we learned from P.T. Nam of the ongoing work that would culminate in~\cite{NamZhuZhu-25}. Many thanks to the authors of this paper for their friendly attitude towards the present work, in particular in the places where it overlaps with theirs. Several discussions with P.T. Nam, M. Lewin and A. Knowles were aslo helpful.

\section{Definitions and main result}
We give here the main definitions that we will use, and we formulate our main result, as well as the proof strategy, which will is inspired by that outlined in~\cite[Section~4]{LewNamRou-20}.

\subsection{Fock space formalism, Quantum and Classical models}

\noindent\textbf{Hilbert space.} We consider a system of many interacting \textit{bosons} on the $2\textnormal{D}$ torus $\mathbb{T}^2 = \left[0,2 \pi\right]^2$. The Hilbert space for one particle is 
\begin{equation}
    \mathfrak{H} = L^2\left(\mathbb{T}^2\right)\ . \\
\end{equation}
We work grand-canonically, i.e we do not fix the number of particles. The appropriate many-body Hilbert space is thus the \textit{bosonic Fock space}
\begin{equation}
    \mathfrak{F}(\mathfrak{H}) = \mathbb{C} \oplus \bigoplus_{n=1}^\infty \mathfrak{H}^{\otimes_s n} \ , \\
\end{equation}
where $\mathfrak{H}^{\otimes_s n}$ is the $n-$fold symmetric tensor product. An operator $A_k$ acting on the $k$-particles sector $\mathfrak{H}^{\otimes_s k}$ can be lifted to an operator on $\mathfrak{F}(\mathfrak{H})$ as follows :
\begin{equation}
    \mathbb{A}_k= 0 \oplus \dots \oplus 0 \bigoplus_{n=k}^\infty \left( \sum_{1 \leq i_1 < \dots < i_k \leq n} (A_k)_{i_1, \dots, i_k}\right) \ , \\
\end{equation}
which we call the \textit{second quantization} of $A_k$. When $k=1$, the second quantization of a one-body operator $h$ is denoted by $\dgamma{h}$. 

\medskip

\noindent\textbf{States, density matrices.} We define the set of \textit{states} on the Fock space as follows
\begin{equation}
    \mathcal{S}(\mathfrak{F}(\mathfrak{H})) = \left\{ \Gamma \textnormal{ self-adjoint operator on } \mathfrak{F}(\mathfrak{H}) \ \vert \ \Gamma \geq 0 , \ \textnormal{Tr}_{\mathfrak{F}(\mathfrak{H})} \left[ \Gamma \right] = 1 \right\} \ . \\
\end{equation}
We will be interested in a particular class of states, called \textit{quantum Gibbs states}, that are of the form
\begin{equation}
    \Gamma = \frac{e^{-\mathbb{H}}}{\textnormal{Tr}_{\mathfrak{F}(\mathfrak{H})} \left[ e^{-\mathbb{H}} \right]} \ , \\
\end{equation}
where $\mathbb{H}$ is a self-adjoint operator on the Fock space such that $\textnormal{Tr}_{\mathfrak{F}(\mathfrak{H})} \left[ e^{-\mathbb{H}} \right] < \infty$. We will also make use of the $k$-body reduced density matrix $\Gamma^{(k)}$ of a state $\Gamma \in \mathcal{S}(\mathfrak{F}(\mathfrak{H}))$ which is defined by duality by setting 
\begin{equation}
    \textnormal{Tr}_{\mathfrak{H}^{\otimes_s k}} \left[A_k \Gamma^{(k)}\right] = \textnormal{Tr}_{\mathfrak{F}(\mathfrak{H})} \left[\mathbb{A}_k \Gamma\right]
\end{equation}
for every bounded operator $A_k$ on $\mathfrak{H}^{\otimes_s k}$. If $\Gamma$ is a diagonal state, i.e
\begin{equation*}
    \Gamma = \bigoplus_{n=0}^\infty \Gamma_n \ , \\
\end{equation*}
we have the equivalent formulation
\begin{equation*}
    \Gamma^{(k)} = \sum_{n\geq k} {n \choose k} \textnormal{Tr}_{k+1 \rightarrow n} \left[ \Gamma_n\right] \ , \\
\end{equation*}
where the partial trace notation $\textnormal{Tr}_{k+1 \rightarrow n}$ corresponds to tracing out $n-k$ variables. \\

\medskip

\noindent\textbf{Creation/annihilation operators.} Consider any $f \in \mathfrak{H}$. We define respectively $a^\dagger(f)$ and $a(f)$ as operators on the Fock space, whose actions on any $k$-particle sector, $k \geq 1$ is given by
\begin{equation}
    \begin{split}
        a^\dagger(f) \ &\ \mathfrak{H}^{\otimes_s k} \longrightarrow \mathfrak{H}^{\otimes_s k+1} \\
        a(f) \ &\ \mathfrak{H}^{\otimes_s k} \longrightarrow \mathfrak{H}^{\otimes_s k-1}
    \end{split}
\end{equation}
with
\begin{equation}
    \begin{split}
        a^\dagger(f) u_1 \otimes_s \dots \otimes_s u_k & = \sqrt{k+1}f \otimes_su_1 \otimes_s \dots \otimes_s u_k \\
        a(f) u_1 \otimes_s \dots \otimes_s u_k & = \frac{1}{\sqrt{k}} \sum_{j=1}^{k} \ps{f}{u_j}u_1 \otimes_s \dots u_{j-1} \otimes_s u_{j+1} \otimes_s \dots \otimes_s u_k \ . \\
    \end{split}
\end{equation}
Here, $\otimes_s$ stands for the symmetric tensor product of vectors. Note that $a(f)$ and $a^\dagger(f)$ are formal adjoints. They satisfy the Canonical Commutation Relations (CCR) : for all $f, g \in \mathfrak{H}$,
\begin{equation} \label{CCR}
    \left[a(f),a(g) \right] = \left[a^\dagger(f),a^\dagger(g) \right] = 0 \ , \quad \left[a(f),a^\dagger(g) \right] = \ps{f}{g} \ . \\
\end{equation}
Moreover, we can define the operator-valued distributions $a_x$ and $a^\dagger_x$ with
\begin{equation}
    a(f) = \int \overline{f(x)}a_x \textnormal{d}x \ , \quad a^\dagger(f) = \int f(x) a^\dagger_x \textnormal{d}x \ . \\
\end{equation}
The creation and annihilation operators allow to rewrite the second quantization of any $k$-body operator with a specified orthonormal basis for $\mathfrak{H}$. For example, if $h$ is a self-adjoint one-body operator on $\mathfrak{H}$, and $\left\{u_n \right\}_{n\ge 1}$ is an orthonormal basis for $\mathfrak{H}$, we have 
\begin{equation}
    \dgamma{h} = \sum_{m,n \geq 1} \ps{u_m}{hu_n}a^\dagger_m a_n \ , \\
\end{equation}
where $a^\dagger_m = a^\dagger(u_m)$, $a_n = a(u_n)$. 

\medskip

\noindent \textbf{Quantum Hamiltonian}. Following \cite{LewNamRou-20}, we first define our Hamiltonian with a chemical potential, that we will fix later in order to obtain a renormalization related to that of the classical theory. We consider 
$$h = - \Delta + 1$$
acting on $\mathfrak{H}$, and define
\begin{equation} \label{eqHamiltonian def with CP}
    \boxed{\mathbb{H}_\lambda^\varepsilon = \dgamma{h} + \lambda \mathbb{W_\varepsilon} - \nu(\lambda,\varepsilon)\mathcal{N} + E_0(\lambda,\varepsilon)}
\end{equation}
where
\begin{equation} \label{eqnon renormalized quantum interaction}
    \begin{split}
        \mathbb{W}_\varepsilon &= 0 \oplus0 \oplus \bigoplus_{n=2}^\infty \left( \sum_{1 \leq j < \ell \leq n} w^\varepsilon(x_j-x_\ell) \right) \\
        & = \frac{1}{2} \sum_{k \in 2\pi\mathbb{Z}^2} \widehat{w^\varepsilon}(k) \left \lvert \dgamma{e^{ik \cdot x}} \right \lvert^2 - \frac{w^\varepsilon(0)}{2}\mathcal{}
    \end{split}
\end{equation}
is the second quantization of the multiplication operator by $w^\varepsilon(x-y)$ acting on $\mathfrak{H}^{\otimes_s2}$. Here, $e^{ik \cdot x}$ is identified with the corresponding multiplication operator on $\mathfrak{H}$. Moreover, $\nu(\lambda,\varepsilon)$ is the \textit{chemical potential}, and $E_0(\lambda,\varepsilon)$ is a constant energy shift. Both have to be appropriately defined in parallel with the renormalized classical theory. 

\medskip

\noindent \textbf{Quantum Gibbs state.} This is the unique minimizer of the free-energy functional associated with~\eqref{eqHamiltonian def with CP} :
\begin{equation*}
    \mathcal{F}_\lambda : \Gamma \in \mathcal{S}(\mathfrak{F}(\mathfrak{H})) \mapsto \tr{\mathbb{H}^\varepsilon_\lambda \Gamma} + \lambda^{-1} \tr{\Gamma \log \Gamma} \ . \\
\end{equation*}
Here, the temperature $T$ is given by 
\begin{equation}
    T = \frac{1}{\lambda}
\end{equation}
The Gibbs state $\Gamma_\lambda$ is explicitely given by 
\begin{equation}
    \boxed{\Gamma_\lambda = \frac{1}{Z(\lambda)}e^{-\lambda\mathbb{H}^\varepsilon_\lambda} \ , \quad Z(\lambda) = \textnormal{Tr}_{\mathfrak{F}(\mathfrak{H})} \left[e^{-\lambda\mathbb{H}^\varepsilon_\lambda}\right]} \ . \\
\end{equation}
It can be checked that we indeed have $\Gamma_\lambda \in \mathcal{S}(\mathfrak{F}(\mathfrak{H}))$. 

\medskip

\noindent \textbf{Classical model}. In order to appropriately define what will be the limiting measure in our result, we first recall the definition of the free Gibbs measure, and refer to \cite[Sections~2 and~5]{LewNamRou-20} for details. We first write the spectral decomposition of $h = -\Delta + 1$ :
\begin{equation}
    h = \sum_{k \in 2\pi\mathbb{Z}^2} \lambda_k \ket{e_k}\bra{e_k} \ , \\
\end{equation}
where 
\begin{equation}
    \lambda_k = |k|^2 + 1 \ , \quad e_k = e^{ik \cdot x} \ , \quad k \in 2\pi\mathbb{Z}^2 \ , \\
\end{equation}
and define the Sobolev-type spaces
\begin{equation} \label{eq : eigenset of the laplacian}
    \mathfrak{H}^s= \left\{u = \sum_k \alpha_k e_k \ , \quad \|u\|_{\mathfrak{H}^s}^2 = \sum_k \lambda_k^s |\alpha_k|^2 < \infty \right\} \ . \\
\end{equation}
Then, the free Gibbs measure $\mu_0$ associated to $h$ is the unique probability (see \cite[Section~3]{LewNamRou-15}), such that for all $K \geq 1$, its cylindrical projection on $V_K = \textnormal{span} \left\{e_1 \dots, e_K \right\}$ is given by 
\begin{equation}
    \textnormal{d}\mu_{0,K}(u) = \prod_{k=1}^K \left( \frac{\lambda_k}{\pi}e^{-\lambda_k |\alpha_k|^2} \textnormal{d}\alpha_k\right) \ , \\
\end{equation}
where $\alpha_k = \ps{e_k}{u}$, and $\textnormal{d}\alpha_k$ is the Lebesgue measure on $\mathbb{C} \simeq \mathbb{R}^2$. Note that the measure $\mu_0$ lives on $\mathfrak{H}^{-\delta}$ for any $\delta > 0$, but not $\delta=0$. Hence it fails to charge $L^2\left(\mathbb{T}^2\right)$. This is precisely why we need to renormalize the mass in the interaction terms, in order to properly define a nonlinear Gibbs measure. To implement such a renormalization, we use \textit{Wick ordering}, as presented in~\cite[Appendix A]{FroKnoSchSoh-22}, and~\cite[Lemma 5.2]{LewNamRou-20}. This is the ``usual'' renormalization procedure that has been implemented in the context of Constructive Quantum Field Theory, with ideas originated by Edward Nelson, see \cite{Nelson-66}. Consider $P_K$ the orthogonal projection on $V_K$, and define
\begin{equation} \label{eqinteraction term in classical model}
    \begin{split}
        W^\varepsilon_K[u] & = \frac{1}{2}\iint_{\mathbb{T}^2 \times \mathbb{T}^2} w^\varepsilon(x-y) \wick{|P_Ku(x)|^2} \wick{|P_Ku(y)|^2}\textnormal{d}x\textnormal{d}y- \tau^\varepsilon \int_{\mathbb{T}^2}\wick{|P_Ku(x)|^2} \textnormal{d}x - E^\varepsilon
    \end{split}
\end{equation}
where $\tau^\varepsilon, E^\varepsilon > 0$ are constants that will be defined below, and
\begin{equation}
    \wick{|P_Ku(x)|^2} = |P_Ku(x)|^2 - \left \langle |P_Ku(x)|^2 \right\rangle_{\mu_0}
\end{equation}
denotes the  Wick ordering with respect to the free measure $\mu_0$. The precise meaning of \eqref{eqinteraction term in classical model} will be explained below.  With arguments similar to those of~\cite[Lemma 5.3]{LewNamRou-20}, we can show that the sequence $(W^\varepsilon_K)_{K \geq 1}$ is a Cauchy sequence in $L^2(\textnormal{d}\mu_0)$, and thus converges strongly to a limit $W^\varepsilon$ in $L^2(\textnormal{d}\mu_0)$. This is also explained in \cite{FroKnoSchSoh-22}. This allows us to define the nonlinear Gibbs measure
\begin{equation} \label{eqnonlinear gibbs meas def in classical model}
    \boxed{\textnormal{d}\mu^\varepsilon(u) = \frac{1}{\mathcal{Z}^\varepsilon}e^{-W^\varepsilon[u]}\textnormal{d}\mu_0(u)} \ . \\
\end{equation}
With a similar procedure, one can define rigorously the renormalized $\Phi^4_2$ measure
\begin{equation} \label{eqphi42 meas def in classical model}
    \boxed{\textnormal{d}\mu(u) = \frac{1}{\mathcal{Z}}e^{-V[u]}\textnormal{d}\mu_0(u)} \ , \\
\end{equation}
where $V[u]$ is defined as the $L^2(\textnormal{d}\mu_0)$-limit of its truncated counterpart
\begin{equation}
    V_K[u] = \frac{1}{2} \int_{\mathbb{T}^2} \wick{|P_Ku(x)|^4} \textnormal{d}x \ , \\
\end{equation}
where 
\begin{equation}
    \wick{|P_Ku(x)|^4} = |P_Ku(x)|^4 - 4 \left \langle |P_Ku(x)|^2\right \rangle_{\mu_0}|P_Ku(x)|^2 + 2\left \langle |P_Ku(x)|^2\right \rangle_{\mu_0}^2 \ . \\
\end{equation}
For details on the computation of the Wick ordering for monomials, see for example \cite[Lemma A.1]{FroKnoSchSoh-22}. \\

\subsection{Quantum / classical correspondence}
Here we give some formal links between the quantum and the classical theory, that will motivate why the $\Phi^4_2$ measure appears naturally as the limit of a many-body quantum Gibbs state. When there is no need for a renormalization, the formal link between the nonlinear Gibbs measure \eqref{eq:nonlinear gibbs def} and the Gibbs state \eqref{eq:generic Gibbs state} is quite straightforward. When renormalizing the interaction term from the classical theory, the obtained counterterms have to be appropriately incorporated in the quantum theory. In \cite{LewNamRou-20}, the ``easiest'' renormalization implemented proceeds by replacing the quadratic terms $|u(x)|^2$ and $|u(y)|^2$ by their Wick ordered version with respect to the free measure $\mu_0$, which is formally defined as
\begin{equation} \label{eq: wick order def}
    \wick{|u(x)|^2} \ = \ |u(x)|^2 - \left \langle |u(x)|^2 \right \rangle_{\mu_0} \ . \\
\end{equation}
Hence, the obtained renormalized interaction is formally given by
\begin{equation} \label{eqformal interaction from LNR}
    \textnormal{Int}^{\textnormal{Ren}}(u) = \frac{1}{2} \iint_{\mathbb{T}^2 \times \mathbb{T}^2} w(x-y) \wick{|u(x)|^2}\wick{|u(y)|^2}\textnormal{d}x\textnormal{d}y \ . \\
\end{equation}
Every term in \eqref{eq: wick order def} is infinite $\mu_0$-almost surely, but their difference turns out to be well defined, which essentially means that the fluctuations of $|u(x)|^2$ around its (infinite) mean are finite. Hence \eqref{eqformal interaction from LNR} has to be understood as an appropriate limit of a truncated version, in an appropriate space, see \cite[Lemma 5.3]{LewNamRou-20} for details.

Here we consider a local version with $w^\varepsilon \rightarrow \delta_0$, the limiting measure will be the $\Phi^4_2$ measure \eqref{eq: phi42 def}, with interaction term
\begin{equation*}
    \frac{1}{2} \int_{\mathbb{T}^2} |u(x)|^4 \textnormal{d}x \ . \\
\end{equation*}
Consequently, the most natural renormalization would be
\begin{equation*}
    \frac{1}{2} \int_{\mathbb{T}^2} \wick{|u(x)|^4} \textnormal{d}x \ . \\
\end{equation*}
It would then be more natural to replace the renormalization from~\eqref{eqformal interaction from LNR} by
\begin{equation}
    \textnormal{Int}_\varepsilon^{\textnormal{Ren}}(u) = \frac{1}{2} \iint_{\mathbb{T}^2 \times \mathbb{T}^2} w^\varepsilon(x-y) \wick{|u(x)|^2|u(y)|^2}\textnormal{d}x\textnormal{d}y \ , \\
\end{equation}
again, as seen as the limit of an appropriately truncated term. This can be done in the same spirit as \cite[Lemma 5.3]{LewNamRou-20}. Now, in order to make a link with the quantum model, the quartic term $\wick{|u(x)|^2|u(y)|^2}$ can be expanded (see for example \cite[Appendix A]{FroKnoSchSoh-22}), and we can express the renormalized interaction as
\begin{equation} \label{eqint ren with extra term I}
    \begin{split} \textnormal{Int}_\varepsilon^{\textnormal{Ren}}(u) & = \frac{1}{2} \iint_{\mathbb{T}^2 \times \mathbb{T}^2} w^\varepsilon(x-y) \wick{|u(x)|^2} \wick{|u(y)|^2}\textnormal{d}x\textnormal{d}y - \tau^\varepsilon \int_{\mathbb{T}^2} \wick{|u(x)|^2} \textnormal{d}x - E^\varepsilon \\
    & \quad - \iint_{\mathbb{T}^2 \times \mathbb{T}^2} w^\varepsilon(x-y)G(x-y) \left(  \wick{\overline{u(x)}u(y)}  - \wick{|u(x)|^2} \right) \textnormal{d}x\textnormal{d}y \ . \\
    \end{split}
\end{equation}
Here $\tau^\varepsilon$ and $\ E^\varepsilon$ are given in terms of the Green function 
\begin{equation} \label{eqgreen function laplacian}
    G(x,y) = \left \langle u(x) \overline{u(y)} \right \rangle_{\mu_0} = \sum_{k} \lambda_k^{-1}e_k(x) \overline{e_k(y)} \\
\end{equation}
as follows
\begin{equation}\label{eq:renorm param}
    \begin{split}
        \tau^\varepsilon & = \int_{\mathbb{T}^2} w^\varepsilon(z)G(z) \textnormal{d}z \underset{\varepsilon \rightarrow 0}{\simeq} |\log \varepsilon| \\ 
        \ E^\varepsilon &= \int_{\mathbb{T}^2} w^\varepsilon(z)G(z)^2 \textnormal{d}z \underset{\varepsilon \rightarrow 0}{\simeq} |\log \varepsilon|^2 \ . \\
    \end{split}
\end{equation}
Note that since we are working on the torus, $G$ is translation invariant, i.e $G(x,y) = G(x-y)$. Now, following closely the formal quantum / classical correspondence from \cite{LewNamRou-20}, we would like to write down the corresponding interacting Hamiltonian for our system. Let us first note that the term
\begin{equation}\label{eq:neglect term}
    I^\varepsilon = \iint_{\mathbb{T}^2 \times \mathbb{T}^2} w^\varepsilon(x-y)G(x-y) \left(  \wick{\overline{u(x)}u(y)}  - \wick{|u(x)|^2} \right) \textnormal{d}x\textnormal{d}y 
\end{equation}
does not make sense in a physical setting, because one cannot find a quantum counterpart that can be added to the Hamiltonian by only altering the chemical potential. Moreover, some insights from the classical theory that can be found in \cite[Section 4]{FroKnoSchSoh-22}, tell us that 
\begin{equation} \label{eqI epsilon goes to zero}
    \left \lVert I^\varepsilon \right \lVert_{L^2(\textnormal{d}\mu_0)} \underset{\varepsilon \rightarrow 0}{\longrightarrow} 0 \ , \\
\end{equation}
which is intuitive since $w^\varepsilon \rightarrow \delta_0$. This implies that~\eqref{eq:neglect term} will not contribute in the limit $\varepsilon \to 0$. More precisely, the following holds
\begin{equation} \label{eqlimit epsilon classical interaction}
    \textnormal{Int}_\varepsilon^{\textnormal{Ren}}(u) \underset{\varepsilon \rightarrow 0^+}{\longrightarrow} \frac{1}{2} \int_{\mathbb{T}^2} \wick{|u(x)|^4} \textnormal{d}x
\end{equation}
strongly in $L^2(\textnormal{d}\mu_0)$. \\

With these renormalized quantities at hand, we can make a formal link between the two following (non renormalized) objects 
\begin{equation}
    \Gamma_\lambda = Z(\lambda)^{-1}e^{-\lambda \mathbb{H}} \quad \longleftrightarrow \quad \textnormal{``} \ \textnormal{d}\mu(u) = \mathcal{Z}^{-1}e^{-\mathcal{E}(u)} \textnormal{d}u \ \textnormal{''} \ . \\
\end{equation}
Here,
\begin{equation}
    \begin{split}
        \lambda \mathbb{H} & = \lambda \int_{\mathbb{T}^2}a^\dagger_xh_xa_x \textnormal{d}x + \frac{\lambda^2}{2}\iint_{\mathbb{T}^2 \times \mathbb{T}^2} a^\dagger_x a_xw^\varepsilon(x-y)a^\dagger_ya_y \textnormal{d}x \textnormal{d}y \\
        \mathcal{E}(u) & = \int_{\mathbb{T}^2} \overline{u(x)} (h u)(x) \textnormal{d}x + \frac{1}{2} \iint_{\mathbb{T}^2 \times \mathbb{T}^2} |u(x)|^2w^\varepsilon(x-y) |u(y)|^2\textnormal{d}x\textnormal{d}y \ , \\
    \end{split}
\end{equation}
the classical objects being formal. In this rewriting, we replace quantum fields $a^\dagger_x a_x$ by functions $|u(x)|^2$ in the interaction term of $\lambda \mathbb{H}$, in order to obtain our renormalized Hamiltonian. By looking at \eqref{eqint ren with extra term I} and taking \eqref{eqI epsilon goes to zero} into account, we would obtain
\begin{equation}
    \begin{split}
        \lambda\mathbb{H}_\lambda^\varepsilon & = \lambda \int_{\mathbb{T}^2}a^\dagger_xh_xa_x \textnormal{d}x + \frac{\lambda^2}{2} \iint_{\mathbb{T}^2 \times \mathbb{T}^2} \left(a^\dagger_x a_x - N_0 \right)w^\varepsilon(x-y) \left( a^\dagger_ya_y - N_0 \right) \textnormal{d}x \textnormal{d}y \\
        & \quad \quad - \lambda \tau^\varepsilon \int_{\mathbb{T}^2} \left(a^\dagger_x a_x - N_0 \right) \textnormal{d}x - E^\varepsilon \ , \\
    \end{split}
\end{equation}
where 
\begin{equation}
    N_0 = \int \tr{a^\dagger_x a_x \Gamma_0} \textnormal{d}x= \tr{\mathcal{N} \Gamma_0} = \sum_{k \in 2\pi\mathbb{Z}^2} \frac{1}{e^{\lambda(|k|^2+1)}-1} \ . \\
\end{equation}
Notice that, by translation invariance 
$$N_0(x,x)=\tr{a^\dagger_x a_x \Gamma_0} $$ 
is constant. Eventually, starting from \eqref{eqnon renormalized quantum interaction} and using the same computations as in \cite[Equation (2.31)]{LewNamRou-20} we find that the renormalized Hamiltonian must have the following form
\begin{equation} \label{eqHamiltonian with fourier}
    \boxed{\lambda \mathbb{H}^\varepsilon_\lambda = \lambda \dgamma{h} + \frac{\lambda^2}{2} \sum_{k \in 2\pi\mathbb{Z}^2} \widehat{w^\varepsilon}(k) \left \lvert \dgamma{e_k} - \left \langle \dgamma{e_k} \right \rangle_{\Gamma_0}\right \lvert ^2 - \lambda \tau^\varepsilon(\mathcal{N}-N_0) - E^\varepsilon}
\end{equation}
where $e_k$ is the multiplication operator by $e^{ik \cdot x}$. Comparing the above with the initial form \eqref{eqHamiltonian def with CP}, we find the following choices for the chemical potential $\nu(\lambda, \varepsilon)$ and energy reference $E(\lambda, \varepsilon)$ to be natural:
\begin{equation}\label{eq:phys param}
    \boxed{
    \begin{split}
    \nu(\lambda,\varepsilon) &= \lambda \widehat{w^\varepsilon}(0)\,N_0 - \lambda \frac{\widehat{w^\varepsilon}(0)}{2} + \tau^\varepsilon,\\
    E_0(\lambda,\varepsilon) &= \lambda\frac{\widehat{w^\varepsilon}(0)}{2}\,N_0^2 + \tau^\varepsilon N_0 - \lambda^{-1}E^\varepsilon \ . \\
    \end{split}}
\end{equation}
The rewriting \eqref{eqHamiltonian with fourier} is very useful, especially since assuming $\widehat{w} \geq 0$ will make the whole interacting term 
\begin{equation} \label{eqinteractin term}
    \mathbb{W}^{\textnormal{Ren}}_\varepsilon= \frac{1}{2}\sum_{k \in 2\pi\mathbb{Z}^2} \widehat{w^\varepsilon}(k) \left \lvert \dgamma{e_k} - \left \langle \dgamma{e_k} \right \rangle_{\Gamma_0}\right \lvert ^2
\end{equation}
positive. We will use this expression frequently throughout the proof, and it also motivates the use of correlation inequalities from~\cite{DeuNamNap-25,LewNamRou-20} for variance terms of the form 
\begin{equation}
    \left \langle \left \lvert A - \left \langle A \right \rangle_{\Gamma_0}\right \lvert ^2 \right \rangle_{\Gamma_\lambda} \ . \\
\end{equation}
Here we used the notation $\langle \cdot \rangle_{\Gamma_\lambda} = \textnormal{Tr} \left[\cdot \Gamma_\lambda \right]$, which we will adopt for the rest of the paper. When no confusion is possible, we will also write $\langle \cdot \rangle_{\lambda}$. \\

We are now ready to state our main result.

\subsection{Main result}

We recall the definitions or the free Gibbs state, or Gaussian quantum state,
\begin{equation} \label{eqfree gibbs state}
    \Gamma_0 = Z_0(\lambda)^{-1} e^{-\lambda \dgamma{h}} \ , \quad Z_0(\lambda) = \textnormal{Tr}_{\mathfrak{F}(\mathfrak{H})} \left[ e^{-\lambda \dgamma{h}} \right] \\
\end{equation}
and of the interacting Gibbs state
\begin{equation} \label{eqquantum gibbs state}
    \Gamma_\lambda = Z(\lambda)^{-1}e^{-\lambda \mathbb{H}^\varepsilon_\lambda} \ , \quad Z(\lambda) = \textnormal{Tr}_{\mathfrak{F}(\mathfrak{H})} \left[ e^{-\lambda \mathbb{H}^\varepsilon_\lambda} \right] \ . \\
\end{equation}
We set the interaction potential $w \in L^1(\mathbb{T}^2)$, such that
\begin{equation} \label{eqassumptions on w}
    \widehat{w}(0) = 1 \ , \ \ \widehat{w}(k) \geq 0 \ , \ \ \sum_{k \in 2\pi\mathbb{Z}^2}\widehat{w}(k)(1 + |k|^2) < + \infty \ . \\
\end{equation}
Moreover, for any $\varepsilon > 0$, we define
\begin{equation}
    w^\varepsilon = \varepsilon^{-2}w(\varepsilon^{-1}\cdot) \ , \\
\end{equation}
which is an approximation of the Dirac delta $\delta_0$ when $\varepsilon \rightarrow 0$. To consider it as a periodic function on $\mathbb{T}^2$, we identify it with its periodized version
\begin{equation} \label{eq : periodization w}
    w^\varepsilon(x) = \sum_{k \in 2\pi\mathbb{Z}^2}\widehat{w}(\varepsilon k)e^{ik \cdot x} \ . \\
\end{equation}
Assumptions \eqref{eqassumptions on w} and \eqref{eq : periodization w} are rather classical in the many-body literature, and include for example Bessel or Gaussian interactions. \\

 Our main result is as follows:

\begin{theorem}[\textbf{Convergence of the relative free-energy}]\label{thmConvergence of the relative free-energy}\mbox{}\\
    Let $\lambda > 0$ and $\eta \in (0,1/24)$. Set $\varepsilon = \lambda^\eta$. Consider the renormalized interaction of the $\Phi^4_2$ theory
    \begin{equation}
        V(u) = \frac{1}{2} \int_{\mathbb{T}^2} \wick{|u(x)|^4} \textnormal{d}x \ , \\
    \end{equation}
    which is well-defined as a $L^2(\textnormal{d}\mu_0)$-limit of its truncated counterpart. Define the classical partition function
    \begin{equation}
        \mathcal{Z} = \int e^{-V(u)} \textnormal{d}\mu_0(u) \ . \\
    \end{equation}
    Then, setting the parameters of~\eqref{eqHamiltonian with fourier} as in~\eqref{eq:renorm param}, we have as $\lambda \rightarrow 0^+$,
    \begin{equation} \label{eqrelative free energy convergence}
        \log \frac{Z(\lambda)}{Z_0(\lambda)} \underset{\lambda \rightarrow 0^+}{\longrightarrow} \log \mathcal{Z} \ . \\
    \end{equation}
\end{theorem}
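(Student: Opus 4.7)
\medskip

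\textbf{Overall strategy.} The plan is to split the proof into two independent pieces, following the LNR/FKSS decomposition advertised in the introduction. Let $\mathcal{Z}^\varepsilon$ denote the normalization constant of the smeared classical Gibbs measure $\mu^\varepsilon$ of~\eqref{eqnonlinear gibbs meas def in classical model}. The first (quantum) step is to show that
\begin{equation*}
    \log \frac{Z(\lambda)}{Z_0(\lambda)} = \log \mathcal{Z}^\varepsilon + o(1) \quad \text{as } \lambda \to 0^+, \ \varepsilon = \lambda^\eta, \ \eta < 1/24,
\end{equation*}
and the second (classical) step is to invoke the already established convergence $\mathcal{Z}^\varepsilon \to \mathcal{Z}$ from~\cite{FroKnoSchSoh-22}. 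All of the difficulty lies in the first step, which I would split into matching upper and lower bounds. Throughout, the key algebraic identity is~\eqref{eqhamiltonian with fourier}, which rewrites the interaction as a positive combination of squared centered second-quantized modes; this is the form that allows quantum-classical correspondence and the application of correlation inequalities.

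\medskip

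\textbf{Upper bound.} Here I would use the Gibbs variational principle $-\log Z(\lambda) \leq \lambda\,\mathrm{Tr}[\mathbb{H}_\lambda^\varepsilon \Gamma] + \mathrm{Tr}[\Gamma \log \Gamma]$ evaluated on a well-chosen trial state. The natural choice, taken from Section~4 of~\cite{LewNamRou-20}, is a de Finetti superposition built from $\mu^\varepsilon$: roughly a $\mu^\varepsilon$-average of coherent states $\ket{\xi(u)}\bra{\xi(u)}$, tensored with the free Gaussian state $\Gamma_0$ on high modes. The entropy is computed using Berezin-Lieb-type inequalities, while the energy is computed by expanding $\mathrm{Tr}[\mathbb{W}_\varepsilon^{\mathrm{Ren}} \Gamma]$ via Wick's theorem. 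One obtains the renormalized classical energy $W^\varepsilon[u]$ plus errors that come (a) from commutators between $a^\dagger_x$ and $a_x$ (these produce precisely the counterterms $\tau^\varepsilon$ and $E^\varepsilon$ thanks to our choice~\eqref{eq:phys param}), and (b) from the mismatch between the smeared $w^\varepsilon$ and pointwise contractions. The quantitative task is to keep track of powers of $\varepsilon$ in these errors, since $\widehat{w^\varepsilon}(k) = \widehat{w}(\varepsilon k)$ produces $\|w^\varepsilon\|_\infty \sim \varepsilon^{-2}$ and similar singular quantities.

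\medskip

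\textbf{Lower bound.} This is the more delicate direction. Following the LNR approach, I would again use the variational characterization of the interacting free energy, but this time applied to $\Gamma_\lambda$ itself, and extract the classical partition function via lower symbols / Husimi measures on the low modes $P_K$ combined with the free state on high modes. The strategy is: (i) localize the problem in a finite-dimensional subspace $V_K$ by splitting $\mathbb{H}_\lambda^\varepsilon$; (ii) use relative entropy and a Berezin-Lieb inequality to replace the quantum entropy on $V_K$ by the classical entropy of the Husimi measure; (iii) use correlation inequalities from~\cite{DeuNamNap-25,LewNamRou-20} applied to the positive expression~\eqref{eqinteractin term} to pass the energy estimate through the Husimi-to-classical dequantization, controlling the variance
$\langle | \dgamma{e_k} - \langle \dgamma{e_k}\rangle_{\Gamma_0} |^2 \rangle_{\Gamma_\lambda}$
by its non-interacting counterpart plus a manageable error. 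The Wick counterterms $\tau^\varepsilon$ and $E^\varepsilon$ will again combine with the commutator contributions, exactly matching~\eqref{eqint ren with extra term I} up to the negligible $I^\varepsilon$.

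\medskip

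\textbf{Main obstacle.} The hardest part will be the quantitative control of the cut-off $K$ and the smearing $\varepsilon$ against each other in the lower bound. The number of relevant modes scales as $K \sim \lambda^{-1}$ (temperature times unit energy), while divergent quantities like $\tau^\varepsilon \sim |\log \varepsilon|$, $E^\varepsilon \sim |\log \varepsilon|^2$, $\|w^\varepsilon\|_\infty \sim \varepsilon^{-2}$, plus typical powers of $N_0 \sim |\log \lambda|$ appearing from Wick contractions, all have to be balanced against one another. Each bound on a commutator, a cubic term from the expansion of $|\dgamma{e_k} - \langle \dgamma{e_k}\rangle_{\Gamma_0}|^2$ shifted by the Husimi transform, or an interchange of $P_K$ and $w^\varepsilon$, costs some explicit power of $\varepsilon^{-1}$. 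Keeping all these powers under control and showing that a polynomial budget $\varepsilon = \lambda^\eta$ with $\eta < 1/24$ is enough for the errors to be $o(1)$ is where the argument will get technical; the specific threshold $1/24$ should emerge as the tightest ratio among all the exponents appearing in the estimates. Once the first step gives $\log Z(\lambda)/Z_0(\lambda) = \log \mathcal{Z}^\varepsilon + o(1)$, invoking~\cite{FroKnoSchSoh-22} closes the argument.
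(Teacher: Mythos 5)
Your outline matches the paper's strategy in its essential mechanics: Gibbs variational principle on both sides, localization to low modes $P\mathfrak{H}$, coherent-state / de Finetti arguments and Berezin--Lieb inequalities to pass between quantum and classical entropies, the correlation inequality of~\cite{DeuNamNap-25,LewNamRou-20} to control the high-momentum variance $\langle|\dgamma{e_k^+}-\langle\dgamma{e_k^+}\rangle_0|^2\rangle_\lambda$, and finally importing the classical convergence from~\cite{FroKnoSchSoh-22}. The condition $\eta<1/24$ does indeed emerge from balancing the powers of $\varepsilon^{-1}$, $\Lambda$ and $\lambda$ in the error terms (in the paper, it is the requirement $8\eta<\nu<1/3$ in the lower bound that forces this threshold).

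The one organizational point you gloss over, and where the paper proceeds a bit differently, concerns your ``first step'' $\log\frac{Z(\lambda)}{Z_0(\lambda)}=\log\mathcal{Z}^\varepsilon+o(1)$. The variational analysis (Propositions~\ref{propfree energy lower bound} and~\ref{propFree energy upper bound}) naturally lands on the \emph{doubly} regularized partition function $\mathcal{Z}^\varepsilon_K=\int_{P\mathfrak{H}}e^{-W^\varepsilon_K[u]}\,\textnormal{d}\mu_{0,K}(u)$, in which both the mode cutoff $K$ (through $\Lambda=\lambda^{-\nu}$) and the smearing $\varepsilon=\lambda^\eta$ are tied to the same small parameter $\lambda$. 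You cannot simply first send $K\to\infty$ at fixed $\varepsilon$ and then $\varepsilon\to 0$; the two limits are taken jointly. The paper isolates this as a separate statement, Proposition~\ref{propConvergence regularized renormalized partition function}, which shows $\mathcal{Z}^\varepsilon_K\to\mathcal{Z}$ directly. The key ingredient making this work is the $K$-uniform-in-$\varepsilon$ Cauchy estimate $\|V^\varepsilon_K-V^\varepsilon_L\|_{L^2(\textnormal{d}\mu_0)}\lesssim K^{-1+\delta}$ from~\cite[Lemma~4.5]{FroKnoSchSoh-22}, together with $\|W^\varepsilon_K-V^\varepsilon_K\|_{L^2(\textnormal{d}\mu_0)}\to 0$ uniformly in $K$ (Lemma~\ref{lemmacontrol of V W K eps}) and hypercontractivity to upgrade $L^2$ to $L^{2r}$. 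Your sequential route could be salvaged using these same inputs, but you would need to flag the commutation of the $K$ and $\varepsilon$ limits explicitly rather than take it for granted.
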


This result is an extension of~\cite[Theorem 3.1]{LewNamRou-20} and parallel results in~\cite{FroKnoSchSoh-20}, which cover the case where $\varepsilon$ is fixed. We also extend~\cite[Theorem 2.1,~Equation (2.23)]{FroKnoSchSoh-22}, because the condition on $\varepsilon$ is relaxed. Indeed, as the authors explain in the remark following the statement of the theorem, their result is based on a functional integral representation of the interacting Gibbs state, thus leading them to control oscillatory integrals throughout their proof. As a resulting issue, they get exponentially diverging factors of the form $e^{(\tau^\varepsilon)^2/2 + E^\varepsilon}$, which, in view of the asymptotic behaviour of the relevant quantities, forces them to require the condition
\begin{equation*}
    \varepsilon \geq \exp(- (\log \lambda^{-1})^{1/2-c})
\end{equation*}
for some constant $c>0$. We can bypass this constraint because our approach here is variational and does not rely on functional integral representations. \\

To compare our scaling of $\varepsilon$ with the diluteness condition~\eqref{eqcondition epsilon} we note that the expected number of particles in the free state is given by
\begin{equation}
    \begin{split}
        \tr{\mathcal{N}\Gamma_0} & = \sum_{k \in 2\pi\mathbb{Z}^2} \frac{1}{e^{\lambda( |k|^2 + 1)} - 1} \\
        &= - \frac{1}{4 \pi} \lambda^{-1} \log \lambda + \mathcal{O}(\lambda^{-1}) 
    \end{split}
\end{equation}
see for example \cite[Lemma B.1]{LewNamRou-20}. Our choice of $\varepsilon$ hence essentially corresponds to $\varepsilon \ll N^{-1/24}$. A similar scaling is obtained in~\cite[Section~12]{NamZhuZhu-25}. \\

\begin{remark}[Convergence of states]
When deriving nonlinear Gibbs measures from many-body quantum mechanics, a typical additional result, which is a consequence of the convergence of the relative free energy, is the convergence of the reduced density matrices (see for example \cite{LewNamRou-20} and \cite{FroKnoSchSoh-20}, or any related work from the two groups). In our setting, if one wants to straightforwardly adapt \cite[Lemma 11.3]{LewNamRou-20}, the condition $\varepsilon = \lambda^\eta$ raises issues because of some a priori bounds which lead to exponentially diverging factors. In \cite{NamZhuZhu-25}, the authors manage to bypass this issue, the main expense being that the convergence is obtained in a weaker topology, namely a weak convergence against $L^2$ functions with compactly supported Fourier transform rather than a strong convergence in Hilbert-Schmidt norm. It could be interesting to try to obtain this stronger convergence, and generalize it to the more challenging three dimensional case. \\
\end{remark}

\subsection{Variational proof strategy}
Let us now explain the main steps of our proof. As in~\cite{LewNamRou-15,LewNamRou-20,NamZhuZhu-25}, the whole argument is based on quantum and classical formulations of the \textit{Gibbs variational principle}. The Gibbs state $\Gamma_\lambda$ is indeed the unique minimizer for the variational problem 
\begin{equation} \label{eqvariational problem 1}
    - \log Z(\lambda) = \inf_{\Gamma \in \mathcal{S}(\mathfrak{F}(\mathfrak{H}))} \left\{ \lambda \text{Tr}_{\mathfrak{F}(\mathfrak{H})} \left[\mathbb{H}^\varepsilon_\lambda \Gamma \right] + \text{Tr}_{\mathfrak{F}(\mathfrak{H})} \left[\Gamma \log \Gamma\right] \right\} \ . \\
\end{equation}
Using a similar formulation for $\Gamma_0$  we also have~\cite[Equation (1.9)]{LewNamRou-15} that $\Gamma_\lambda$ is the unique minimizer to the relative variational problem
\begin{equation} \label{eqvariational problem 2}
    - \log \frac{Z(\lambda)}{Z_0(\lambda)} = \inf_{\Gamma \in \mathcal{S}(\mathfrak{F}(\mathfrak{H}))} \left\{ \mathcal{H}(\Gamma, \Gamma_0) + \lambda^2 \text{Tr}_{\mathfrak{F}(\mathfrak{H})} \left[ \mathbb{W}^\text{Ren}_\varepsilon \Gamma\right] - \lambda \tau^\varepsilon \text{Tr}_{\mathfrak{F}(\mathfrak{H})} \left[ \left( \mathcal{N} - N_0\right) \Gamma\right] - E^\varepsilon \right\} \ , \\
\end{equation}
where 
\begin{equation}
    \mathcal{H}(\Gamma, \Gamma') = \tr{\Gamma \left( \log \Gamma - \log \Gamma'\right)} \geq 0 \\
\end{equation}
is the von Neumann relative entropy. Both \eqref{eqvariational problem 1} and \eqref{eqvariational problem 2} will be useful for us, depending on whether we are trying to establish a lower or an upper bound on the relative free energy. \\

 On the classical side, the nonlinear Gibbs measure $\mu^\varepsilon$ defined in \eqref{eqnonlinear gibbs meas def in classical model} is the unique minimizer to the following variational problem
\begin{equation} \label{eqclassical variational problem 1}
    - \log \mathcal{Z}^\varepsilon = \inf_{\nu \ \ll \ \mu_0} \left\{ \mathcal{H}_{\textnormal{cl}}(\nu, \mu_0) + \int W^\varepsilon[u] \textnormal{d}\nu(u) \right\}
\end{equation}
where $\mathcal{H}_{\textnormal{cl}}$ is the classical relative entropy between two probability measures
\begin{equation} \label{eqclassical relative entropy def}
    \mathcal{H}_{\textnormal{cl}}(\mu,\mu')= \int \frac{\textnormal{d}\mu}{\textnormal{d}\mu'}(u) \log \frac{\textnormal{d}\mu}{\textnormal{d}\mu'}(u) \textnormal{d}\mu'(u) \ . \\
\end{equation}
We will be using \eqref{eqvariational problem 2} and \eqref{eqclassical variational problem 1} as crucial arguments in our proofs, as well as some finite dimensional analogues. \\

In order to prove \eqref{eqrelative free energy convergence}, we derive an upper bound and a lower bound to the relative free energy. Both will have as starting point the quantum variational problem \eqref{eqvariational problem 2}. More precisely, everything will be done in a finite dimensional setting by introducing a frequency cutoff $\Lambda > 0$, which will be taken to infinity afterwards. The lower bound is proved in Proposition \ref{propfree energy lower bound}, and the upper bound in Proposition \ref{propFree energy upper bound}. Combining these results, we will obtain a projected version of \eqref{eqrelative free energy convergence}, and Proposition \ref{propConvergence regularized renormalized partition function} will allow us to conclude.

\section{A priori estimates for the relative free energy}

We start by deriving some useful estimates on the relative free energy :

\begin{lemma}[\textbf{A-priori estimates for the relative free energy}]\label{lem:apriori}\mbox{}\\
Let $\lambda >0$, and $\varepsilon = \lambda^\eta$ for some $\eta \in (0,1/2)$. Then we have
\begin{equation} \label{eqa priori estimate relative free energy}
    \left \lvert  \log \frac{Z(\lambda)}{Z_0(\lambda)} \right \lvert \lesssim |\log \varepsilon|^2 \ . \\
\end{equation}
In particular, the Gibbs state $\Gamma_\lambda$ satisfies
\begin{equation}
   \mathcal{H}(\Gamma_\lambda, \Gamma_0) +\lambda^2  \tr{\left( \mathcal{N}-N_0\right)^2\Gamma_\lambda} \lesssim |\log\varepsilon|^2 \ . \\
\end{equation}
Moreover, the reduced density matrices $\Gamma_\lambda^{(1)}$ and $\Gamma_0^{(1)}$ satisfy 
\begin{equation} \label{eqa priori bound on RDM}
    \lambda \left \lVert \sqrt{h} \left(\Gamma_\lambda^{(1)} - \Gamma_0^{(1)} \right) \sqrt{h} \right \lVert_{\textnormal{HS}} \lesssim |\log \varepsilon|^2 \ . \\
\end{equation}
\end{lemma}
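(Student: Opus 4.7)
The three estimates come out jointly from the relative variational identity~\eqref{eqvariational problem 2}, the positivity structure of $\mathbb{W}^{\text{Ren}}_\varepsilon$, and the asymptotics~\eqref{eq:renorm param}. I would organise the argument in three connected steps.

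\emph{Step 1: upper bound on $-\log(Z(\lambda)/Z_0(\lambda))$ via a trial-state argument.} I plug $\Gamma = \Gamma_0$ into~\eqref{eqvariational problem 2}. The relative entropy vanishes by definition, and by translation invariance $\langle \dgamma{e_k}\rangle_{\Gamma_0} = N_0 \delta_{k,0}$, which makes the linear term in $\mathcal{N} - N_0$ vanish as well. Only the interaction contribution survives, and by the Fourier-diagonal form~\eqref{eqinteractin term} it reads
\begin{equation*}
    \lambda^2 \tr{\mathbb{W}^{\text{Ren}}_\varepsilon \Gamma_0} - E^\varepsilon = \frac{\lambda^2}{2}\sum_{k\in 2\pi\mathbb{Z}^2}\widehat{w^\varepsilon}(k)\,\text{Var}_{\Gamma_0}\!\bigl(\dgamma{e_k}\bigr) - E^\varepsilon.
\end{equation*}
A Wick computation on the quasi-free state $\Gamma_0$ gives $\text{Var}_{\Gamma_0}(\dgamma{e_k}) = N_0 + \sum_p n_p n_{p+k}$ with $n_p = (e^{\lambda\lambda_p}-1)^{-1}$. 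Expanding $\lambda n_p = \lambda_p^{-1} + O(\lambda)$ on the low-lying modes and identifying the leading contribution with the Fourier expression of $E^\varepsilon = \int w^\varepsilon G^2$ from~\eqref{eq:renorm param}, the $|\log\varepsilon|^2$ divergence cancels exactly with $-E^\varepsilon$, and the remaining error is $\lesssim |\log\varepsilon|^2$.

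\emph{Step 2: entropy and number-variance bounds.} Evaluating~\eqref{eqvariational problem 2} at its own minimiser $\Gamma_\lambda$ yields the identity
\begin{equation*}
    -\log\frac{Z(\lambda)}{Z_0(\lambda)} = \mathcal{H}(\Gamma_\lambda,\Gamma_0) + \lambda^2\tr{\mathbb{W}^{\text{Ren}}_\varepsilon \Gamma_\lambda} - \lambda\tau^\varepsilon \tr{(\mathcal{N}-N_0)\Gamma_\lambda} - E^\varepsilon.
\end{equation*}
Since $\widehat{w^\varepsilon}(k) = \widehat{w}(\varepsilon k) \geq 0$ for all $k$ and $\widehat{w^\varepsilon}(0) = \widehat{w}(0) = 1$, the $k=0$ contribution to~\eqref{eqinteractin term} gives the operator inequality $\mathbb{W}^{\text{Ren}}_\varepsilon \geq \tfrac12(\mathcal{N}-N_0)^2$. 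Setting $Y := \tr{(\mathcal{N}-N_0)^2\Gamma_\lambda}$ and using Cauchy--Schwarz on the linear chemical-potential term, the identity rewrites as
\begin{equation*}
    \mathcal{H}(\Gamma_\lambda,\Gamma_0) + \tfrac12\bigl(\lambda Y^{1/2} - \tau^\varepsilon\bigr)^2 \;\leq\; -\log\frac{Z(\lambda)}{Z_0(\lambda)} + E^\varepsilon + \tfrac12(\tau^\varepsilon)^2.
\end{equation*}
Inserting the upper bound of Step 1 and using $\tau^\varepsilon \simeq |\log\varepsilon|$, $E^\varepsilon \simeq |\log\varepsilon|^2$, the right-hand side is $\lesssim |\log\varepsilon|^2$. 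This delivers simultaneously $\mathcal{H}(\Gamma_\lambda, \Gamma_0) \lesssim |\log\varepsilon|^2$ and $\lambda^2 Y \lesssim |\log\varepsilon|^2$. Reinserting these into the identity yields the matching lower bound on $\log(Z(\lambda)/Z_0(\lambda))$, completing~\eqref{eqa priori estimate relative free energy}, and in passing gives the additional estimate $\lambda^2\tr{\mathbb{W}^{\text{Ren}}_\varepsilon \Gamma_\lambda} \lesssim |\log\varepsilon|^2$.

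\emph{Step 3: Hilbert--Schmidt bound on the one-particle density matrix difference.} To pass from the interaction-energy control of Step 2 to a bound on $\lambda\|\sqrt{h}(\Gamma_\lambda^{(1)}-\Gamma_0^{(1)})\sqrt{h}\|_{\text{HS}}$, I invoke the correlation inequalities developed in~\cite{LewNamRou-20,DeuNamNap-25}, which bound $\|h^{1/2}(\Gamma^{(1)}-\Gamma_0^{(1)})h^{1/2}\|_{\text{HS}}$ in terms of density-mode fluctuations of the form $\langle |\dgamma{A} - \langle \dgamma{A}\rangle_{\Gamma_0}|^2\rangle_{\Gamma_\lambda}$. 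The weight $\sqrt{h}$ on each side supplies the summability at high momenta where $\widehat{w^\varepsilon}(k)$ may become small, so that the Step 2 control of $\lambda^2 \tr{\mathbb{W}^{\text{Ren}}_\varepsilon \Gamma_\lambda}$ transfers into the desired Hilbert--Schmidt estimate.

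\emph{Expected main obstacle.} Steps 1 and 2 are essentially algebraic once~\eqref{eqvariational problem 2} and the positivity of $\mathbb{W}^{\text{Ren}}_\varepsilon$ are in hand; the only care needed is that the Wick computation in Step 1 matches the $|\log\varepsilon|^2$ divergence of $E^\varepsilon$ precisely. The genuinely delicate part is Step 3: the weight $\widehat{w^\varepsilon}(k)$ on the density modes does not by itself dominate the $h$-weighted Hilbert--Schmidt norm of $\Gamma_\lambda^{(1)}-\Gamma_0^{(1)}$, and the correlation inequality has to compensate for this loss uniformly in $\varepsilon$.
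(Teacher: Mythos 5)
Your Steps 1 and 2 match the paper's argument in substance. Step 1 (trial state $\Gamma_0$, Wick computation, $\eta<1/2$ absorbing the $N_0\sum_k\widehat{w^\varepsilon}(k)$ contribution) is exactly the paper's upper bound. Step 2 is the paper's lower bound repackaged: the paper uses the operator inequality $\lambda\tau^\varepsilon(\mathcal{N}-N_0)\leq\frac{(\tau^\varepsilon)^2}{2}+\frac{\lambda^2}{2}(\mathcal{N}-N_0)^2$, which is the same thing as your completed square $\frac12(\lambda Y^{1/2}-\tau^\varepsilon)^2$; both deliver $\mathcal{H}(\Gamma_\lambda,\Gamma_0)\lesssim|\log\varepsilon|^2$ and $\lambda^2 Y\lesssim|\log\varepsilon|^2$ from the positivity of $\mathcal{H}$ and of the $k\neq 0$ modes of $\mathbb{W}^{\mathrm{Ren}}_\varepsilon$. (The ``exact cancellation'' you invoke between $\sum_k\widehat{w^\varepsilon}(k)\sum_p n_pn_{p+k}$ and $E^\varepsilon$ is not actually needed or used; a one-sided bound of both by $|\log\varepsilon|^2$ suffices, and the paper only establishes the one-sided version.)

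Step 3 is where there is a real gap, and you have the argument running in the wrong direction. You propose to deduce the bound on $\lambda\lVert\sqrt h(\Gamma_\lambda^{(1)}-\Gamma_0^{(1)})\sqrt h\rVert_{\mathrm{HS}}$ from the control on $\lambda^2\mathrm{Tr}[\mathbb{W}^{\mathrm{Ren}}_\varepsilon\Gamma_\lambda]$ via ``correlation inequalities'' of Lewin--Nam--Rougerie / Deuchert--Nam--Napi\'orkowski. But those correlation inequalities go the other way: they bound the density-mode variances $\langle|\mathrm d\Gamma(A)-\langle \mathrm d\Gamma(A)\rangle_0|^2\rangle_\lambda$ by the Hilbert--Schmidt norm of $\sqrt h(\Gamma_\lambda^{(1)}-\Gamma_0^{(1)})\sqrt h$ (plus commutator errors), and they are used in the paper's Theorem~\ref{thm:corr}, \emph{after} and \emph{using} the a priori estimate~\eqref{eqa priori bound on RDM}, not to derive it. Trying to invert that logic would indeed run into the obstacle you acknowledge at the end, namely that $\widehat{w^\varepsilon}(k)$ does not dominate the $h$-weighted HS norm; there is no uniform-in-$\varepsilon$ way around this. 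The correct and much simpler tool is the quantitative Pinsker-type inequality of \cite[Theorem~6.1]{LewNamRou-20}, applied with one-body Hamiltonian $\lambda h$: it gives directly
\begin{equation*}
    \lambda^2\,\mathrm{Tr}\!\left[\left|\sqrt h\left(\Gamma_\lambda^{(1)}-\Gamma_0^{(1)}\right)\sqrt h\right|^2\right]\leq 4\,\mathcal{H}(\Gamma_\lambda,\Gamma_0)\left(\sqrt2+\sqrt{\mathcal{H}(\Gamma_\lambda,\Gamma_0)}\right)^2\lesssim|\log\varepsilon|^4,
\end{equation*}
hence $\lambda\lVert\sqrt h(\Gamma_\lambda^{(1)}-\Gamma_0^{(1)})\sqrt h\rVert_{\mathrm{HS}}\lesssim|\log\varepsilon|^2$. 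The interaction energy bound plays no role in this step; only the relative entropy bound from Step 2 is needed.
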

\begin{proof}
    Taking $\Gamma_0$ as a trial state in \eqref{eqvariational problem 2}, we find 
\begin{equation} \label{eq:vppletrialgamma0}
    \begin{split}
        - \log \frac{Z(\lambda)}{Z_0(\lambda)} & \leq \lambda^2 \text{Tr}_{\mathfrak{F}(\mathfrak{H})} \left[ \mathbb{W}^\text{Ren}_\varepsilon \Gamma_0\right] - E^\varepsilon \\
        & = \frac{\lambda^2}{2} \sum_{k\neq0} \widehat{w^\varepsilon}(k) \tr{|\dgamma{e_k}|^2 \Gamma_0} + \frac{\lambda^2}{2} \tr{\left( \mathcal{N}-N_0\right)^2\Gamma_0} - E^\varepsilon \ . \\
    \end{split}
\end{equation}
By \cite[Lemma~5.13]{LewNamRou-20}, we can bound the second term of the right hand side as follows
\begin{equation}
    \lambda^2\tr{\left( \mathcal{N}-N_0\right)^2\Gamma_0} \lesssim \tr{h^{-2}} \ . \\
\end{equation}
which is just a finite constant in our case. In order to deal with the first term in~\eqref{eq:vppletrialgamma0}, we follow a variance computation from~\cite[Lemma~5.11]{LewNamRou-20}. We write
\begin{equation} \label{eq:dgammaAca}
    \dgamma{A} = \sum_{m,n \geq 1}\ps{u_m}{Au_n}a^\dagger_ma_n \ . \\
\end{equation}
for any one body operator $A$ on $\mathfrak{H}$, 
where $\left\{ u_n \right\}_n$ is an orthonormal basis for $\mathfrak{H}$, and $a^\dagger_n = a^\dagger(u_n)$, $a_n = a(u_n)$ the corresponding creation and annihilation operators. Then,
\begin{equation}
    \begin{split}
        \tr{|\dgamma{e_k}|^2 \Gamma_0} & = \sum_{m,n,p,q \geq 1} \overline{\ps{u_m}{e_ku_n}}\ps{u_p}{e_ku_q} \tr{\left( a^\dagger_ma_n\right)^*a^\dagger_pa_q  \Gamma_0} \\
        & = \sum_{m,n,p,q \geq 1} \overline{\ps{u_m}{e_ku_n}}\ps{u_p}{e_ku_q} \left \langle a^\dagger_na_ma^\dagger_pa_q \right \rangle_{0} \ . \\
    \end{split}
\end{equation}
Using the CCR \eqref{CCR} and Wick's theorem (see \cite[Section 5.4]{namMQMII2020}), we have the following expression
\begin{equation}
    \left \langle a^\dagger_na_ma^\dagger_pa_q \right \rangle_{0} = \left \langle a^\dagger_na_m\right \rangle_{0}\left \langle a^\dagger_pa_q \right \rangle_{0} + \left \langle a^\dagger_na_q \right \rangle_{0} \delta_{m,p} + \left \langle a^\dagger_na_q\right \rangle_{0}\left \langle a^\dagger_pa_m \right \rangle_{0} \ . \\
\end{equation}
Hence
\begin{equation}
    \begin{split}
        \tr{|\dgamma{e_k}|^2 \Gamma_0} & = \left \langle \sum_{m,n \geq 1} \overline{\ps{u_m}{e_ku_n}} \left(a^\dagger_ma_n\right)^* \right \rangle_0 \left \langle \sum_{p,q \geq 1} \ps{u_p}{e_ku_q} a^\dagger_pa_q \right \rangle_0\\
        & \quad + \sum_{n,q \geq 1} \sum_{m\geq 1} \ps{e_k u_n}{u_m} \ps{u_m}{e_ku_q}\left \langle a^\dagger_n a_q \right \rangle_0 \\
        & \quad + \sum_{m,n,p,q \geq 1} \ps{e_k u_n}{u_m} \ps{u_p}{e_ku_q}\left \langle a^\dagger_n a_q \right \rangle_0 \left \langle a^\dagger_p a_m \right \rangle_0 \\
        & = \left \lvert \left \langle \dgamma{e_k} \right \rangle_0 \right \lvert^{2} + \tr{|e_k|^2 \Gamma_{0}^{(1)}} + \tr{e_k^* \Gamma_{0}^{(1)} e_k \Gamma_{0}^{(1)}} \ . \\
    \end{split}
\end{equation}
We used for the second term the fact that $\ps{e_k u_n}{u_m} \ps{u_m}{e_ku_q} = \bra{e_k u_n} (\ket{u_m} \bra{u_m}) \ket{e_ku_q}$, and the resolution of the identity $\sum_{m \geq 1}\ket{u_m} \bra{u_m} = \mathbf{1}_{\mathfrak{H}}$, and for the third term the fact that reduced density matrices can be defined with creation and annihilation operators~\cite[Equation~(6.1)]{LewNamRou-15}, giving in particular 
\begin{equation} \label{eq:reddensmatca}
    \ps{u_m}{\Gamma_0^{(1)}u_q} = \tr{a^\dagger_qa_m\Gamma_0} \ . \\
\end{equation}
Now, using \eqref{eq:dgammaAca}, \eqref{eq:reddensmatca} and the fact that $\Gamma_0^{(1)} = \frac{1}{e^{\lambda h}-1} = \sum_{p \in 2\pi\mathbb{Z}^2}\frac{1}{e^{\lambda(|p|^2+1)}-1}\ket{u_p}\bra{u_p}$  we can compute directly the following 
\begin{equation} 
    \begin{split}
        \left \langle \dgamma{e_k} \right \rangle_0 & = \tr{\sum_{p \in 2\pi\mathbb{Z}^2}a^\dagger_q a_{q-k} \Gamma_0} \\
        & = \sum_{p \in 2\pi\mathbb{Z}^2} \ps{u_q}{\Gamma_0^{(1)}u_{q-k}} \\
        & =  \sum_{p,r \in 2\pi\mathbb{Z}^2} \frac{1}{e^{\lambda(|r|^2+1)}-1} \delta_{q,r} \delta_{r,q-k} \\
        & = 0 \ , \\
    \end{split}
\end{equation}
since the sum in \eqref{eq:vppletrialgamma0} is taken over $k \neq 0$. For the second term, we just have
\begin{equation}
    \tr{|e_k|^2 \Gamma_{0}^{(1)}} = \tr{\mathcal{N}\Gamma_0} \lesssim - \frac{\log \lambda}{\lambda} \ . \\
\end{equation}
Hence, using that $\sum_{k\neq0} \widehat{w^\varepsilon}(k) \lesssim \varepsilon^{-2}$, we obtain that
\begin{equation}
    \frac{\lambda^2}{2}\sum_{k\neq0} \widehat{w^\varepsilon}(k) \tr{|e_k|^2 \Gamma_{0}^{(1)}} \lesssim -\lambda \varepsilon^{-2}\log \lambda \ . \\
\end{equation}
Now, let us use the operator inequality
\begin{equation}
    \frac{1}{e^{\lambda h}-1} \leq \frac{1}{\lambda h}
\end{equation}
and the fact that for any operators $A,B,C$, we have 
\begin{equation}
    A \leq B \implies C^* AC \leq C^*BC
\end{equation}
as operators. By cyclicity of the trace, we obtain
\begin{equation}
    \begin{split}
        \lambda^2\tr{e_k^* \Gamma_{0}^{(1)} e_k \Gamma_{0}^{(1)}} & = \lambda^2 \tr{e_k^* \Gamma_{0}^{(1)} e_k \Gamma_{0}^{(1)}} \\
        & = \lambda^2 \tr{\sqrt{\Gamma_{0}^{(1)}}e_k^* \Gamma_{0}^{(1)} e_k \sqrt{\Gamma_{0}^{(1)}}} \\
        & \leq \lambda \tr{\sqrt{\Gamma_{0}^{(1)}}e_k^* h^{-1}e_k \sqrt{\Gamma_{0}^{(1)}}} \\
        & = \lambda \tr{h^{-1/2}e_k^* \Gamma_{0}^{(1)}e_k h^{-1/2}} \\
        & \leq \tr{e_k^* h^{-1} e_k h^{-1}} \\
        & = \iint_{\mathbb{T}^2 \times \mathbb{T}^2}e^{ik\cdot(x-y)} G(x-y)^2\text{d}x \text{d}y \ . \\
    \end{split}
\end{equation}
Here $G$ is the Green kernel of $h$, and we used the fact that it is translation invariant on the torus. Moreover we have that $\int_{\mathbb{T}^2}\text{d}x = (2\pi)^2 < \infty$, and thus
\begin{equation}
    \iint_{\mathbb{T}^2 \times \mathbb{T}^2}e^{ik\cdot(x-y)} G(x-y)^2\text{d}x \text{d}y \simeq \widehat{G^2}(k) \ . \\
\end{equation}
In addition we have that
\begin{equation} \label{eq:greenkernel2d}
    G(x,y) \underset{|x-y| \rightarrow 0}{\sim} - \frac{1}{2 \pi} \log |x-y| \ , \\
\end{equation}
see e.g. \cite[Lemma 5.4]{RouSer-14}, which here will reduce to $G(r)^2 \simeq (\log r)^2$. Hence we obtain 
\begin{equation}
    \begin{split}
        \frac{\lambda^2}{2} \sum_{k\neq0} \widehat{w^\varepsilon}(k)\tr{e_k^* \Gamma_{0}^{(1)} e_k \Gamma_{0}^{(1)}} &\lesssim \sum_{k\neq0} \widehat{w^\varepsilon}(k)\widehat{G^2}(k) \\
        & \leq \sum_{k\in 2\pi\mathbb{Z}^2} \widehat{w^\varepsilon}(k)\widehat{G^2}(k) \\
        & = \int_{\mathbb{T}^2}w^\varepsilon(x) G(x)^2\text{d}x \\
        & = \int_{\mathbb{T}^2}w(y) G(\varepsilon y)^2\text{d}y \\
        & \simeq |\log \varepsilon|^2 \ . \\
    \end{split}
\end{equation}
Here in the second line, we used the identity
\begin{equation}
\widehat{G^2}(0) = \sum_{k \in 2 \pi \mathbb{Z}^2} \frac{1}{(|k|^2+1)^2}
\end{equation}
which can be obtained from~\eqref{eqgreen function laplacian} for example. Together with the assumption $\widehat{w} \geq 0$, this means that $\widehat{w^\varepsilon}(0)\widehat{G^2}(0) \geq 0$. In the third line, we used Parseval's identity and \eqref{eq:greenkernel2d} in the last line. Combining these three estimates with the variational principle and the fact that $E^\varepsilon \simeq |\log \varepsilon|^2$, we find that
\begin{equation}
    \begin{split}
        - \log \frac{Z(\lambda)}{Z_0(\lambda)} & \lesssim -\lambda \varepsilon^{-2}\log \lambda + |\log \varepsilon|^2 \\
        & \lesssim |\log\varepsilon|^2 \quad \text{if and only if } \eta < 1/2 \ . \\
    \end{split}
\end{equation}
The other side of \eqref{eqa priori estimate relative free energy} is obtained by writing the variational principle \eqref{eqvariational problem 2} as 
\begin{equation}
    \begin{split}
        - \log \frac{Z(\lambda)}{Z_0(\lambda)} & = \mathcal{H}(\Gamma_\lambda, \Gamma_0) + \frac{\lambda^2}{2} \sum_{k\neq0} \widehat{w^\varepsilon}(k) \tr{|\dgamma{e_k}|^2 \Gamma_\lambda} + \frac{\lambda^2}{2} \tr{\left( \mathcal{N}-N_0\right)^2\Gamma_\lambda} \\
        & \quad \quad - \lambda \tau^\varepsilon \text{Tr} \left[ \left( \mathcal{N} - N_0\right) \Gamma_\lambda\right] - E^\varepsilon \\
        & \geq \frac{\lambda^2}{2} \tr{\left( \mathcal{N}-N_0\right)^2\Gamma_\lambda} - \lambda \tau^\varepsilon \text{Tr} \left[ \left( \mathcal{N} - N_0\right) \Gamma_\lambda\right] - E^\varepsilon \ . \\
    \end{split}
\end{equation}
Indeed, the Von-Neumann relative entropy is always positive, and $\widehat{w} \geq 0$ by assumption. Then, using Young's inequality, we have that, as operators,
\begin{equation}
    \lambda \tau^\varepsilon \left( \mathcal{N} - N_0\right) \leq \frac{(\tau^\varepsilon)^2}{2} + \frac{\lambda^2}{2} \left( \mathcal{N} - N_0\right)^2 \ . \\
\end{equation}
Hence 
\begin{equation}
    \begin{split}
        \frac{\lambda^2}{2} \tr{\left( \mathcal{N}-N_0\right)^2\Gamma_\lambda} - \lambda \tau^\varepsilon \text{Tr} \left[ \left( \mathcal{N} - N_0\right) \Gamma_\lambda\right] & \geq -\frac{(\tau^\varepsilon)^2}{2} \\
        & \gtrsim - |\log \varepsilon|^2
    \end{split}
\end{equation}
which gives the desired result~\eqref{eqa priori estimate relative free energy}. We also obtain 
\begin{equation}
    \mathcal{H}(\Gamma_\lambda, \Gamma_0) +\lambda^2 \tr{\left( \mathcal{N}-N_0\right)^2\Gamma_\lambda} \lesssim |\log\varepsilon|^2 \ . \\
\end{equation}
Furthermore, applying \cite[Theorem~6.1]{LewNamRou-20} to the operator $\lambda h$, we find
\begin{equation}
    \begin{split}
        \lambda^2 \tr{\left \lvert \sqrt{h} \left(\Gamma_\lambda^{(1)} - \Gamma_0^{(1)} \right) \sqrt{h} \right \lvert^2} & \leq 4 \mathcal{H}(\Gamma_\lambda, \Gamma_0) \left(\sqrt{2} + \sqrt{\mathcal{H}(\Gamma_\lambda, \Gamma_0)} \right)^2 \\
        & \lesssim |\log \varepsilon|^4 \ . \\
    \end{split}
\end{equation}
In the last line we used $\mathcal{H}(\Gamma_\lambda, \Gamma_0) \lesssim |\log \varepsilon|^2$ which comes from the a priori estimate. This allows us to conclude.
\end{proof}

\section{Correlation estimate for high momenta}
We now seek a quantitative estimate for the interaction term~\eqref{eqinteractin term}, or more precisely its projected version defined in~\eqref{eqvar of projected interaction term} below. We tackle each Fourier mode separately, and the sought-after bound thus takes the form of a variance estimate for a one-body operator. This is where considerations from~\cite[Section~7]{LewNamRou-20} come into play, to deduce such bounds from the a priori information~\eqref{eqa priori bound on RDM}. Our goal here is to keep track of the $\varepsilon$ dependence along the proof. We shall rely on substantial improvements on the method of~\cite[Section~7]{LewNamRou-20} which were recently provided in~\cite[Section~7]{DeuNamNap-25}, using Stahl's theorem~\cite{Stahl-13}. \\

As mentioned, we first localize the problem to finitely many dimensions, using a cutoff $\Lambda$, that will be chosen so that $\Lambda = \lambda^{-\nu}$, for some $\nu >0$. We define the orthogonal projector
\begin{equation}
P := \mathbf{1}_{h \leq \Lambda} \ . \\
\end{equation}
Note that there holds
\begin{equation}
    \begin{split}
        P = \sum_{k \in 2\pi\mathbb{Z}^2} \mathbf{1}_{|k|^2+1 \leq \Lambda} \ket{e_k}\bra{e_k} = \sum_{j=1}^K \ket{e_j}\bra{e_j} \ , \\
    \end{split}
\end{equation}
where $K = \text{Tr}[P]$. We now set $$e_k = e_k^+ + e_k^- \mbox{ with } e_k^- = P e_k P \ . $$
In order to estimate the interaction term, we would like to control the variance (the expectations are in the interacting Gibbs state)
\begin{equation} \label{eqvar of projected interaction term}
    \lambda^2 \left \langle {\left \lvert \text{d}\Gamma(e_k) - \left \langle \text{d}\Gamma(e_k) \right \rangle_0 \right \lvert}^2 \right \rangle_\lambda \ . \\
\end{equation}
Using the Cauchy-Schwarz and Young inequalities, we see that, for every $\delta > 0$,
\begin{multline} \label{eq:icsvarterms}
    \left \langle {\left \lvert \text{d}\Gamma(e_k^-) - \left \langle \text{d}\Gamma(e_k^-) \right \rangle_0 \right \lvert}^2 \right \rangle_\lambda \leq (1+\delta)\left \langle {\left \lvert \text{d}\Gamma(e_k) - \left \langle \text{d}\Gamma(e_k) \right \rangle_0 \right \lvert}^2 \right \rangle_\lambda \\
    + (1+\delta^{-1})\left \langle {\left \lvert \text{d}\Gamma(e_k^+) - \left \langle \text{d}\Gamma(e_k^+) \right \rangle_0 \right \lvert}^2 \right \rangle_\lambda \ . \\
\end{multline}
Hence, we have to control the projected terms, that is those involving $e_k^+$ and $e_k^-$. The following result gives a control on the first term of the above inequality. 
\begin{theorem}[\textbf{Correlation estimate for high momenta}]\label{thm:corr}\mbox{}\\
    Let $\lambda >0$, $\varepsilon = \lambda^\eta$ and $\Lambda = \lambda^{-\nu}$ for some $\eta \in (0,1/2)$ and $\nu > 0$. We have
    \begin{equation} \label{eq:quantitativeesti in thm}
    \lambda^2 \left \langle {\left \lvert \textnormal{d}\Gamma(e_k^+) - \left \langle \textnormal{d}\Gamma(e_k^+) \right \rangle_0 \right \lvert}^2 \right \rangle_\lambda \lesssim |\log \varepsilon|^2 \Lambda^{-1/2+} - \lambda^2 \log \lambda (|k|^2 + |k| \Lambda^{1/2}) + \varepsilon^{-2} \lambda^2 |\log \lambda|^2 \ . \\
\end{equation}
\end{theorem}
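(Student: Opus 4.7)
The plan is to decompose
$$\left\langle \left|\dgamma{e_k^+} - \langle \dgamma{e_k^+}\rangle_0\right|^2 \right\rangle_\lambda = \textnormal{Var}_{\Gamma_\lambda}\!\bigl(\dgamma{e_k^+}\bigr) + \left|\tr{e_k^+\bigl(\Gamma_\lambda^{(1)} - \Gamma_0^{(1)}\bigr)}\right|^2,$$
and estimate the two contributions separately: the squared mean-shift via the a priori Hilbert--Schmidt bound \eqref{eqa priori bound on RDM} of Lemma~\ref{lem:apriori}, and the variance under $\Gamma_\lambda$ via a Stahl-based correlation inequality comparing it to the quasi-free variance under $\Gamma_0$, the latter being explicitly computable by Wick's theorem.

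For the mean-shift I would insert $h^{1/2}h^{-1/2}$ on each side and apply Cauchy--Schwarz:
$$\left|\tr{e_k^+\bigl(\Gamma_\lambda^{(1)} - \Gamma_0^{(1)}\bigr)}\right| \leq \|h^{-1/2}e_k^+h^{-1/2}\|_{\textnormal{HS}}\,\|\sqrt{h}\bigl(\Gamma_\lambda^{(1)} - \Gamma_0^{(1)}\bigr)\sqrt{h}\|_{\textnormal{HS}}.$$
Since $e_k$ acts as the Fourier shift $u_n \mapsto u_{n+k}$, the matrix elements of $e_k^+ = e_k - Pe_kP$ in the plane-wave basis are $\delta_{m,n+k}\bigl(1-\mathbf{1}_{|m|^2+1\leq \Lambda}\mathbf{1}_{|n|^2+1\leq \Lambda}\bigr)$, so the HS norm reduces to a 2D lattice sum of $\bigl((|n|^2+1)(|n+k|^2+1)\bigr)^{-1}$ restricted to $\max(|n|,|n+k|) \gtrsim \sqrt{\Lambda}$, which one bounds by $\Lambda^{-1/2+}$ uniformly in $k$ (the $+$ absorbing a logarithm from the borderline shell). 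Combining with $\lambda\|\sqrt{h}(\Gamma_\lambda^{(1)} - \Gamma_0^{(1)})\sqrt{h}\|_{\textnormal{HS}} \lesssim |\log \varepsilon|^2$ and multiplying by $\lambda^2$ gives the $|\log\varepsilon|^2\Lambda^{-1/2+}$ contribution.

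For the variance under $\Gamma_\lambda$, I would invoke the correlation inequality of \cite[Section~7]{DeuNamNap-25} --- the quantitative sharpening of the method of \cite[Section~7]{LewNamRou-20} based on Stahl's theorem~\cite{Stahl-13} --- to replace $\textnormal{Var}_{\Gamma_\lambda}\!\bigl(\dgamma{e_k^+}\bigr)$ by $\textnormal{Var}_{\Gamma_0}\!\bigl(\dgamma{e_k^+}\bigr)$ up to an error controlled by the relative entropy and the HS difference of reduced density matrices, both already $\lesssim |\log\varepsilon|^2$ by Lemma~\ref{lem:apriori}. The quasi-free variance is then computed by Wick's theorem exactly as in the proof of that lemma:
$$\textnormal{Var}_{\Gamma_0}\!\bigl(\dgamma{e_k^+}\bigr) = \tr{|e_k^+|^2 \Gamma_0^{(1)}} + \tr{(e_k^+)^* \Gamma_0^{(1)} e_k^+ \Gamma_0^{(1)}}.$$
A direct computation shows that $|e_k^+|^2$ is diagonal in the momentum basis with entries $1 - \mathbf{1}_{|n|^2+1\leq\Lambda}\mathbf{1}_{|n+k|^2+1\leq\Lambda}$, so the first trace sums the 2D occupation numbers over $\{|n|>\sqrt{\Lambda}\}\cup\{|n|\le\sqrt\Lambda,\,|n+k|>\sqrt\Lambda\}$. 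Using $\Gamma_0^{(1)} \leq (\lambda h)^{-1}$ together with $\tr{\mathcal{N}\Gamma_0}\lesssim -\lambda^{-1}\log\lambda$, the crescent region (of width $\sim|k|$ and length $\sim\sqrt\Lambda$ when $|k|\le\sqrt\Lambda$, and area $\sim|k|^2$ otherwise) produces, after multiplication by $\lambda^2$, precisely the $-\lambda^2\log\lambda(|k|^2 + |k|\sqrt{\Lambda})$ term. The second trace is handled as in Lemma~\ref{lem:apriori} via $\lambda\Gamma_0^{(1)}\leq h^{-1}$, reducing to a Green-function integral; together with the error arising from the correlation inequality this furnishes the residual $\varepsilon^{-2}\lambda^2|\log\lambda|^2$ tail.

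The main obstacle is in the third step: a merely qualitative correlation inequality --- as in the original argument of \cite[Section~7]{LewNamRou-20} --- would not yield any gain in $\Lambda$, so it is essential to use the Stahl-based refinement of \cite[Section~7]{DeuNamNap-25} with explicit constants, and to thread the $|\log\varepsilon|^2$ a priori information through the chain of estimates while retaining the $\Lambda^{-1/2+}$ decay. Keeping everything quantitative enough to later accommodate a polynomial rate in $\varepsilon$ is the technical heart of the proof.
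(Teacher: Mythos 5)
Your decomposition and your reading of the Deuchert--Nam--Napi\'orkowski correlation inequality do not match what that inequality actually delivers, and the mechanism you assign to each of the last two terms in~\eqref{eq:quantitativeesti in thm} is wrong. The version of \cite[Theorem~2]{DeuNamNap-25} used here is \emph{not} a comparison of $\textnormal{Var}_{\Gamma_\lambda}$ to $\textnormal{Var}_{\Gamma_0}$ with an error controlled by the relative entropy. It bounds the full centered second moment $\langle\mathbb{B}^2\rangle_\lambda$ (with $\mathbb{B}=\tfrac{\lambda}{4}(\dgamma{f_k^+}-\langle\dgamma{f_k^+}\rangle_0)$ already centered at the free mean, so no prior bias/variance split is performed) by \emph{two} ingredients: (i) a uniform bound $a$ on the perturbed first moments $|\tr{\mathbb{B}\Gamma_{\lambda,t}}|$ over $t\in[-1,1]$, which requires extending the a priori Hilbert--Schmidt estimate to the perturbed pairs $(\Gamma_{\lambda,t},\Gamma_{0,t})$ and which, combined with $\|h^{-1/2}f_k^+h^{-1/2}\|_{\textnormal{HS}}\lesssim\Lambda^{-1/2+}$, produces the $|\log\varepsilon|^2\Lambda^{-1/2+}$ term; and (ii) the double commutator $\langle[\mathbb{B},[\lambda\mathbb{H}^\varepsilon_\lambda,\mathbb{B}]]\rangle_\lambda$ with the \emph{interacting} Hamiltonian. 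The kinetic part $[\dgamma{f_k^+},[\dgamma{h},\dgamma{f_k^+}]]=\dgamma{[f_k^+,[h,f_k^+]]}$ is bounded in operator norm by $(|k|^2+|k|\Lambda^{1/2})\mathcal{N}$, which after pairing with $\tr{\mathcal{N}\Gamma_\lambda}\lesssim-\lambda^{-1}\log\lambda$ yields $-\lambda^2\log\lambda(|k|^2+|k|\Lambda^{1/2})$; the interaction part is bounded by $\varepsilon^{-2}\mathcal{N}^2$ using $\sum_\ell\widehat{w^\varepsilon}(\ell)\lesssim\varepsilon^{-2}$, which with $\tr{\mathcal{N}^2\Gamma_\lambda}\lesssim\lambda^{-2}|\log\lambda|^2$ gives $\varepsilon^{-2}\lambda^2|\log\lambda|^2$. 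None of this comes from Wick's theorem on $\Gamma_0$.

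Your alternative route also does not reproduce the correct scaling. The crescent $\{|n|\le\sqrt\Lambda,\ |n+k|>\sqrt\Lambda\}$ has $\sim\min(|k|,\sqrt\Lambda)\sqrt\Lambda$ modes, each with occupation $\sim(\lambda\Lambda)^{-1}$, so after multiplying by $\lambda^2$ it contributes $\sim\lambda|k|/\sqrt\Lambda$, which is \emph{larger} than the target $-\lambda^2\log\lambda\,|k|\sqrt\Lambda$ by a factor $\sim(-\lambda\Lambda\log\lambda)^{-1}\to\infty$ whenever $\nu<1$. The high-momentum bulk $\{|n|>\sqrt\Lambda\}$ contributes $\sim\lambda|\log\lambda|$ after multiplication by $\lambda^2$, which again does not match. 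Finally there is no concrete mechanism in your sketch for the $\varepsilon^{-2}$ in the last term --- it has to come from the interaction commutator, not from a ``residual tail'' of the free Green-function computation. You have correctly identified the two key quantitative inputs (the Stahl-based sharpening and the a priori HS bound), but the internal logic --- bias/variance split, variance comparison, geometric recount of the free variance --- does not assemble into the estimate; the double-commutator structure of the DNN inequality is essential and is absent from your plan.
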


\begin{proof}
    We use the new correlation inequality derived in \cite[Theorem 2]{DeuNamNap-25}. In order to apply it, we need to define a family of perturbed Gibbs states. Let $t\in [-1,1]$, and let
\begin{equation}
    \begin{split}
        \Gamma_{\lambda,t} & = Z_{\lambda,t}^{-1}e^{-\lambda \mathbb{H}^\varepsilon_\lambda + t \mathbb{B}} \quad , \quad Z_{\lambda,t} = \text{Tr}\left[ e^{-\lambda \mathbb{H}^\varepsilon_\lambda + t \mathbb{B}} \right] \\
        \Gamma_{0,t} & = Z_{0,t}^{-1}e^{-\lambda \text{d}\Gamma(h) + t \mathbb{B}} \quad , \quad Z_{0,t} = \text{Tr}\left[ e^{-\lambda \text{d}\Gamma(h) + t \mathbb{B}} \right] \ , \\
    \end{split}
\end{equation}
where 
\begin{equation}
    \mathbb{B} = \frac{\lambda}{4} \left( \text{d}\Gamma(f_k^+) - \left \langle \text{d}\Gamma(f_k^+) \right \rangle_0 \right) \ . \\
\end{equation}
Then, \cite[Theorem 2]{DeuNamNap-25} states that if
\begin{equation}
    \sup_{t \in [-1,1]} \left \lvert \text{Tr}\left[ \mathbb{B} \Gamma_{\lambda,t} \right] \right \lvert \leq a \\
\end{equation}
for some $a > 0$, then
\begin{equation}
    \text{Tr}\left[ \mathbb{B}^2 \Gamma_{\lambda,0} \right] \leq a e^a +  \text{Tr} \left[ \left[\mathbb{B}, \left[ \lambda \mathbb{H}^\varepsilon_\lambda, \mathbb{B} \right] \right] \Gamma_{\lambda,0} \right] \ . \\
\end{equation}
In fact, since $\Gamma_{\lambda,0} = \Gamma_\lambda$, we will exactly recover on the RHS the desired term. Note also that we have defined by linearity $f_k \in \left\{\cos(k\cdot x), \sin(k \cdot x) \right\}$ which allows us to consider only self-adjoint operators here, which will simplify things a bit. Now, using the fact that $\Gamma_{\lambda,t}$ and $\Gamma_{0,t}$ satisfy a similar variational principle as $\Gamma_\lambda$ does, we can obtain, from the a priori bound \eqref{eqa priori bound on RDM}, the following estimate  
\begin{equation}
    \lambda \left \lVert \sqrt{h} \left(\Gamma_{\lambda,t}^{(1)} -  \Gamma_{0,t}^{(1)} \right) \sqrt{h}\right \lVert_{\text{HS}} \lesssim |\log \varepsilon|^2 \ . \\
\end{equation}
Next, we compute
\begin{equation}\label{eq:compute}
    \begin{split}
        4\text{Tr} \left[ \mathbb{B} \Gamma_{\lambda,t} \right] & = \text{Tr} \left[ \lambda \left( \text{d}\Gamma(f_k^+) - \left \langle \text{d}\Gamma(f_k^+) \right \rangle_0 \right) \Gamma_{\lambda,t} \right] \\
        & = \lambda \text{Tr} \left[ \text{d}\Gamma(f_k^+) \Gamma_{\lambda,t}\right] - \lambda \text{Tr} \left[ \text{d}\Gamma(f_k^+) \Gamma_{0}\right] \\
        & = \lambda \text{Tr} \left[ f_k^+ \Gamma_{\lambda,t}^{(1)}\right] - \lambda \text{Tr} \left[ f_k^+ \Gamma_{0}^{(1)}\right] \\
        & = \lambda \text{Tr} \left[ f_k^+ \left( \Gamma_{\lambda,t}^{(1)} - \Gamma_{0,t}^{(1)} \right)\right] + \lambda \text{Tr} \left[ f_k^+ \left( \Gamma_{0,t}^{(1)} - \Gamma_{0}^{(1)} \right)\right] \ . \\
    \end{split}
\end{equation}
We start by controlling the second term. To do so, we use \cite[Lemma 6.3]{LewNamRou-20}, which states that for any self-adjoint operator $A$ satisfying $A \leq c h$ for some $c \in(0,1)$, we have 
\begin{equation}
    0 \leq \text{Tr} \left[ A \left( \frac{1}{e^{h-A}-1} - \frac{1}{e^h - 1}\right) \right] \leq \frac{1}{1-c} \text{Tr} \left[h^{-1}A h^{-1}A\right] \ . \\
\end{equation}
To apply this, notice that from the definition of $\mathbb{B}$, we can alternatively write
\begin{equation}
    \Gamma_{0,t} = {\Tilde{Z}_{0,t}}^{-1}e^{-\lambda \dgamma{h_t}} \ . \\
\end{equation}
Hence 
\begin{equation}
    \Gamma_{0,t}^{(1)} = \frac{1}{e^{\lambda h_t}-1} \ , \\
\end{equation}
where $h_t = h - \frac{t}{4}f_k^+$. Now let $A_t = \lambda \frac{t}{4} f_k^+$. We have $A_t \lesssim \lambda h$ as operators. Indeed $\|f_k^+\| \leq 1$, so that $\|h - h_t \| \leq \frac{1}{4}$, and thus $h \lesssim h_t \lesssim h$ as operators. Hence $A_t = \lambda(h-h_t) \leq C \lambda h$, $C \in (0,1)$ . Then, one can write
\begin{equation*}
    \begin{split}
        \lambda \text{Tr} \left[ f_k^+ \left( \Gamma_{0,t}^{(1)} - \Gamma_{0}^{(1)} \right)\right] & = \frac{4}{t} \text{Tr} \left[ A_t \left( \frac{1}{e^{\lambda h - A_t}-1} - \frac{1}{e^{\lambda h}-1} \right)\right] \\
        & \leq \frac{4}{t} \frac{1}{1-C} \text{Tr} \left[ (\lambda h)^{-1}A_t (\lambda h)^{-1}A_t \right] \\
        & \lesssim \text{Tr}\left[ h^{-1}f_k^+h^{-1}f_k^+ \right] \\
        & = \left \lVert h^{-1/2} f_k^+ h^{-1/2} \right \lVert_{\text{HS}}^2 \ . \\
    \end{split}
\end{equation*}
We used the cyclicity of the trace in the last line. For the first term in~\eqref{eq:compute}, we use again the cyclicity of the trace, and the Cauchy-Schwarz inequality as follows 
\begin{equation*}
    \begin{split}
        \text{Tr} \left[ f_k^+ \left( \Gamma_{\lambda,t}^{(1)} - \Gamma_{0,t}^{(1)} \right)\right] & = \text{Tr} \left[h^{-1/2} f_k^+ h^{-1/2} \sqrt{h}\left( \Gamma_{\lambda,t}^{(1)} - \Gamma_{0,t}^{(1)} \right)\sqrt{h}\right] \\
        & \leq \sqrt{\text{Tr} \left[ \left \lvert h^{-1/2} f_k^+ h^{-1/2} \right \lvert^2 \right]} \sqrt{\text{Tr} \left[\left \lvert \sqrt{h}\left( \Gamma_{\lambda,t}^{(1)} - \Gamma_{0,t}^{(1)} \right)\sqrt{h} \right \lvert^2\right]} \ . \\
    \end{split}
\end{equation*}
Combining these estimates, we find, for all $t \in [-1,1]$, 
\begin{equation}
    \begin{split}
        \left \lvert \text{Tr} \left[ \mathbb{B} \Gamma_{\lambda,t} \right] \right \lvert & \lesssim \lambda \left \lVert \sqrt{h}\left( \Gamma_{\lambda,t}^{(1)} - \Gamma_{0,t}^{(1)} \right)\sqrt{h} \right \lVert_{\text{HS}}\left \lVert h^{-1/2} f_k^+ h^{-1/2} \right \lVert_{\text{HS}} + \left \lVert h^{-1/2} f_k^+ h^{-1/2} \right \lVert_{\text{HS}}^2 \\
        & \lesssim | \log \varepsilon|^2\left \lVert h^{-1/2} f_k^+ h^{-1/2} \right \lVert_{\text{HS}} \ . \\
    \end{split}
\end{equation}
Here in the last line, we used the fact that 
\begin{equation}\label{eq:Schatten resol}
\mbox{with } h = -\Delta + 1, \mbox{ we have } \text{Tr} \left[h^{-p} \right] < + \infty \mbox{ for all } p > d/2= 1 \ . \\
\end{equation}
This implies
\begin{equation}
    \text{Tr} \left[ \frac{1}{h^{1+\delta}} \right] < + \infty \quad \forall \delta>0 \ . \\
\end{equation}
Hence, observing first that we can decompose
\begin{equation}
    f_k^+ = Pf_k Q + Qf_kP + Qf_kQ \\
\end{equation}
with $Q = \mathbf{1} - P$, we can write 
\begin{equation*}
    \begin{split}
        \left \lVert h^{-1/2} f_k^+ h^{-1/2} \right \lVert_{\text{HS}} & = \left \lVert f_k^+ h^{-1} \right \lVert_{\text{HS}} \\
        & \leq \left \lVert Pf_k Q h^{-1} \right \lVert_{\text{HS}} + \left \lVert Qf_k P h^{-1} \right \lVert_{\text{HS}} + \left \lVert Qf_k Q h^{-1} \right \lVert_{\text{HS}} \\
        & \leq \left \lVert Q h^{-1} \right \lVert_{\text{HS}} \ . \\
    \end{split}
\end{equation*}
Next,
\begin{equation*}
    \begin{split}
        \left \lVert Q h^{-1} \right \lVert_{\text{HS}}^2 & = \text{Tr} \left[ h^{-2} \mathbf{1}_{h > \Lambda} \right] \\
        & = \text{Tr} \left[ \frac{1}{h^{1+\delta}} \frac{1}{h^{1-\delta}} \mathbf{1}_{h > \Lambda} \right] \quad \forall \delta > 0 \\
        & \leq \frac{1}{\Lambda^{1-\delta}} \text{Tr}\left[ \frac{1}{h^{1+\delta}} \right] \ . \\
    \end{split}
\end{equation*}
Thus we find 
$$\left \lVert Q h^{-1} \right \lVert_{\text{HS}} \lesssim \Lambda^{-1/2+} \ .$$
On the one hand, this provides us the final bound for our uniform estimate on $\left \lvert \text{Tr} \left[ \mathbb{B} \Gamma_{\lambda,t} \right] \right \lvert$, and on the other hand, this shows that 
$$\left \lVert h^{-1/2} f_k^+ h^{-1/2} \right \lVert_{\text{HS}} \rightarrow 0 \ , $$
and that $\left \lVert h^{-1/2} f_k^+ h^{-1/2} \right \lVert_{\text{HS}}^2$ is of  higher order and can be neglected in our estimate. Finally, we have 
\begin{equation}
    \left \lvert \text{Tr} \left[ \mathbb{B} \Gamma_{\lambda,t} \right] \right \lvert \leq C |\log \varepsilon |^2 \Lambda^{-1/2+} =: a \\
\end{equation}
uniformly in $t \in [-1,1]$. Now remark that with our current choice for $\Lambda$ and $\varepsilon$, a short calculation gives us $a = C \eta^2 |\log \lambda| \lambda^{\nu/2 -}$ which goes to $0$ when $\lambda \rightarrow 0^+$. Hence, $ae^a \sim a$. We thus obtain the estimate
\begin{equation}
    \text{Tr} \left[ \mathbb{B}^2 \Gamma_\lambda \right] \lesssim |\log\varepsilon|^2\Lambda^{-1/2+} + \text{Tr}\left[ \left[\mathbb{B}, \left[ \lambda \mathbb{H}^\varepsilon_\lambda, \mathbb{B} \right] \right] \Gamma_{\lambda} \right] \ . \\
\end{equation}
We now have to analyse the commutator $\left[\mathbb{B}, \left[ \lambda \mathbb{H}^\varepsilon_\lambda, \mathbb{B} \right] \right]$. A computation shows that
\begin{equation} \label{eqcommutators}
    4\left[\mathbb{B}, \left[ \lambda \mathbb{H}^\varepsilon_\lambda, \mathbb{B} \right] \right] = \lambda^3 \left[\text{d}\Gamma(f_k^+), \left[ \text{d}\Gamma(h), \text{d}\Gamma(f_k^+) \right] \right] + \lambda^2\left[\mathbb{B}, \left[\widetilde{\mathbb{W}}_\varepsilon, \mathbb{B} \right] \right] \ , \\
\end{equation}
where
\begin{equation*}
    \widetilde{\mathbb{W}}_\varepsilon = \mathbb{W}^{\text{Ren}}_\varepsilon - \tau^\varepsilon\lambda^{-1}(\mathcal{N} - N_0) \ . \\
\end{equation*}
Let us deal with the first term in \eqref{eqcommutators}. Using that for any operators $X, Y$, 
$$\left[\text{d}\Gamma(X),\text{d}\Gamma(Y)\right] = \text{d}\Gamma\left(\left[X,Y\right]\right) \ ,$$
we first find that 
$$\left[\text{d}\Gamma(f_k^+), \left[ \text{d}\Gamma(h), \text{d}\Gamma(f_k^+) \right] \right] = \text{d}\Gamma\left(\left[f_k^+, \left[ h,f_k^+ \right] \right]\right) \ .$$
Then, an estimate from the proof of~\cite[Theorem 6.2]{NamZhuZhu-25}, which is independant of the dimension, shows that
\begin{equation*}
    \left[f_k^+, \left[ h,f_k^+ \right] \right] \lesssim |k|^2 + |k| \Lambda^{1/2} \ . \\
\end{equation*}
Hence, we get for the first term in~\eqref{eqcommutators}
\begin{equation}
    \lambda^3 \text{Tr} \left[ \left[\text{d}\Gamma(f_k^+), \left[ \text{d}\Gamma(h), \text{d}\Gamma(f_k^+) \right] \right] \Gamma_\lambda \right] \lesssim \lambda^3 (|k|^2 + |k| \Lambda^{1/2}) \text{Tr} \left[ \mathcal{N} \Gamma_\lambda \right] \ . \\
\end{equation}
In order to estimate $\text{Tr} \left[ \mathcal{N} \Gamma_\lambda \right]$, we use that, from the a priori bounds of Lemma~\ref{lem:apriori} we have 
\begin{equation}
    \lambda^2 \text{Tr}\left[ \left( \mathcal{N} - N_0\right)^2 \Gamma_\lambda \right] \lesssim |\log \varepsilon|^2 \ . \\
\end{equation}
Hence, by the Cauchy-Schwarz inequality, it holds that
\begin{equation*}
    \begin{split}
        \text{Tr}\left[ \left(\mathcal{N} - N_0\right) \Gamma_\lambda \right] & \leq \sqrt{\text{Tr}\left[ \left( \mathcal{N} - N_0\right)^2 \Gamma_\lambda \right]} \\
        & \lesssim |\log \varepsilon| \lambda^{-1} \ . \\
    \end{split}
\end{equation*}
Now, recall that in dimension $d=2$, we have in the limit $\lambda \rightarrow 0^+$ (see \cite{LewNamRou-20} for example)
\begin{equation}
    N_0 \lesssim - \frac{\log \lambda}{\lambda} \ . \\
\end{equation}
Thus, using our assumption on $\varepsilon$, 
\begin{equation*}
    \begin{split}
        \text{Tr} \left[ \mathcal{N} \Gamma_\lambda \right] & \lesssim \frac{|\log \varepsilon|}{\lambda} - \frac{\log \lambda}{\lambda} \lesssim - \frac{\log \lambda}{\lambda} \ . \\
    \end{split}
\end{equation*}
This leads to
\begin{equation}
    \lambda^3 \text{Tr} \left[ \left[\text{d}\Gamma(f_k^+), \left[ \text{d}\Gamma(h), \text{d}\Gamma(f_k^+) \right] \right] \Gamma_\lambda \right] \lesssim - \lambda^2 \log \lambda (|k|^2 + |k| \Lambda^{1/2}) \ . \\
\end{equation}
Let us now deal with the second term in~\eqref{eqcommutators} namely 
$$\lambda^2\left[\mathbb{B}, \left[\widetilde{\mathbb{W}}_\varepsilon, \mathbb{B} \right] \right] = \lambda^2\left[\mathbb{B}, \left[\mathbb{W}^{\text{Ren}}_\varepsilon, \mathbb{B} \right] \right] \ .$$
Since we have
\begin{equation}
    \mathbb{W}^{\text{Ren}}_\varepsilon = \frac{1}{2} \left(\mathcal{N}-N_0 \right)^2 + \frac{1}{2}\sum_{k\neq0}\widehat{w^\varepsilon}(k)\left \lvert \text{d}\Gamma(e_k) \right \lvert^2 \ ,
\end{equation}
we find 
\begin{equation}
    \begin{split}
        \lambda^2\left[\mathbb{B}, \left[\widetilde{\mathbb{W}}_\varepsilon, \mathbb{B} \right] \right] & = \frac{\lambda^4}{2} \sum_{\ell \neq 0}\widehat{w^\varepsilon}(\ell)\left[\text{d}\Gamma(f_k^+), \left[\left \lvert \text{d}\Gamma(e_\ell) \right \lvert^2, \text{d}\Gamma(f_k^+) \right] \right] \\
        & \lesssim \lambda^4 \sum_{\ell \neq 0} \widehat{w^\varepsilon}(\ell) \mathcal{N}^2 \\
        & \lesssim \lambda^4 \varepsilon^{-2}\mathcal{N}^2 \ . \\
    \end{split}
\end{equation}
Here in the last line we used Fourier inversion : 
\begin{equation*}
    w^\varepsilon(0) = \sum_{k \in 2 \pi \mathbb{Z}^2}\widehat{w^\varepsilon}(k) 
\end{equation*}
and the definition of $w^\varepsilon$. Now, using $\text{Tr}\left[\mathcal{N}^2 \Gamma_\lambda\right]| \lesssim \lambda^{-2}|\log\lambda|^2$, we find 
\begin{equation}
    \lambda^2 \text{Tr} \left[ \left[\mathbb{B}, \left[\widetilde{\mathbb{W}}_\varepsilon, \mathbb{B} \right] \right] \Gamma_\lambda \right] \lesssim \varepsilon^{-2} \lambda^2 |\log \lambda|^2 \ . \\
\end{equation}
Putting all this together, we finally arrive at
\begin{equation} \label{eq:quantitativeesti}
    \lambda^2 \left \langle {\left \lvert \text{d}\Gamma(e_k^+) - \left \langle \text{d}\Gamma(e_k^+) \right \rangle_0 \right \lvert}^2 \right \rangle_\lambda \lesssim |\log \varepsilon|^2 \Lambda^{-1/2+} - \lambda^2 \log \lambda (|k|^2 + |k| \Lambda^{1/2}) + \varepsilon^{-2} \lambda^2 |\log \lambda|^2 \\
\end{equation}
which is the desired result. \\
\end{proof}

\section{Convergence of the relative free energy}
Now that we have a quantitative estimate on the high momenta variance term, we can focus on the low momenta one, and apply some finite dimensional semiclassics that have already been used in \cite{LewNamRou-15}, \cite{LewNamRou-20} and \cite{NamZhuZhu-25}. 

\medskip

\noindent \textbf{Fock space localization}. 
We recall here the Fock space localization procedure for bosons, see e.g.~\cite{DerGer-13,Rougerie-LMU} for review. Consider $\mathfrak{H}_1$, $\mathfrak{H}_2$ two Hilbert spaces, and denote by $P_i$ the orthogonal projection of $\mathfrak{H}_1 \oplus \mathfrak{H}_2$ onto $\mathfrak{H}_i$, for $i = 1,2$. There exists a unitary
\begin{equation} \label{eq:unitary fock spaces}
    \mathcal{U}\mathfrak{F} \left( \mathfrak{H}_1 \oplus \mathfrak{H}_2 \right) \longrightarrow \mathfrak{F}(\mathfrak{H}_1) \otimes \mathfrak{F}(\mathfrak{H}_2)
\end{equation}
such that $\mathcal{U} \mathcal{U}^* = \mathbf{1}$, and 
\begin{equation} \label{eq:action of the unitary on creators and annihilators}
    \mathcal{U}a^{\sharp}(f) \mathcal{U}^* = a^\sharp(P_1f) \otimes \mathbf{1} + \mathbf{1} \otimes a^\sharp(P_2f) \quad, \quad \forall f \in \mathfrak{H}_1 \oplus \mathfrak{H}_2
\end{equation}
where $a^\sharp$ is either $a$ or $a^\dagger$. With this unitary $\mathcal{U}$ in hand, we can define the $P$-localization of a state. From now on, we will always consider $\mathfrak{H}_1 = P \mathfrak{H}$ and $\mathfrak{H}_2 = Q \mathfrak{H}$, so that $\mathfrak{H}_1 \oplus \mathfrak{H}_2 = \mathfrak{H}$, for a given Hilbert space $\mathfrak{H}$, and hence we have the corresponding factorization property of the Fock space
\begin{equation}
    \mathfrak{F}(\mathfrak{H}) \simeq \mathfrak{F}(P\mathfrak{H}) \otimes \mathfrak{F}(Q\mathfrak{H}) \ . \\
\end{equation}

\begin{definition}[\textbf{$P$-localization of a state}]\label{def:Ploc}\mbox{}\\
    Let $\mathfrak{H}$ be a separable complex Hilbert space, and let $\mathfrak{F}(\mathfrak{H})$ the associated bosonic Fock space. For any $\Gamma \in \mathcal{S}(\mathfrak{F}(\mathfrak{H}))$ and any orthogonal projector $P$, we define its \textnormal{$P-$localization} $\Gamma_P \in \mathcal{S}(\mathfrak{F}(P\mathfrak{H}))$ as the partial trace
    \begin{equation} \label{eq:P loc}
        \Gamma_P = \textnormal{Tr}_{\mathfrak{F}(Q\mathfrak{H})} \left[ \Gamma  \right] \ , \\
    \end{equation}
    i.e.
    \begin{equation} \label{eq: equivalent def P-loc}
        \textnormal{Tr}_{\mathfrak{F}(P\mathfrak{H})} \left[ A \Gamma_P \right] = \textnormal{Tr}_{\mathfrak{F}(\mathfrak{H})} \left[ A \otimes \mathbf{1}_{\mathfrak{F}(Q\mathfrak{H})} \Gamma \right] \\
    \end{equation}
    for any bounded  operator $A$ on $\mathfrak{F}(P\mathfrak{H})$. Moreover, the density matrices of $\Gamma_P$ can be shown to satisfy 
    \begin{equation}
        \Gamma_P^{(k)} = P^{\otimes k} \Gamma^{(k)} P^{\otimes k} \quad , \quad \forall k \geq 1 \ . \\
    \end{equation}
\end{definition}

\noindent Let us now recall some facts and results that we will use throughout the proof. First of all, let us give the definition of a \textit{lower symbol} at scale $\lambda >0$, using coherent states.

\begin{definition}[\textbf{Bosonic coherent states}]\label{def:coh}\mbox{}\\
    Let $\mathfrak{H}$ be a Hilbert space which is not necessarily finite dimensional. Define, for any $u \in \mathfrak{H}$ the \textnormal{Weyl operator} as
    \begin{equation}
        W(u) = \exp\left(a^\dagger(u) - a(u) \right) \ . \\
    \end{equation}
    Then, a \textnormal{coherent state} is a Weyl-rotation of the vacuum $\ket{0}$ in the Fock space $\mathfrak{F}(\mathfrak{H})$
    \begin{equation} \label{eq: coherent state}
    \begin{split}
        \xi(u) &:= W(u) \ket{0} \\
        & \ = e^{- \frac{\|u\|^2}{2}}e^{a^\dagger(u)}\ket{0} \\
        & \ = e^{- \frac{\|u\|^2}{2}} \bigoplus_{n=0}^\infty \frac{u^{\otimes n}}{\sqrt{n!}} \ . \\
    \end{split}
    \end{equation}
\end{definition}
\noindent The equivalence between the definitions can be seen through the CCR and the Baker-Campbell-Hausdorff formula (see e.g. \cite[Definition 3.11]{Rougerie-EMS} for more details). The $k$-particle density matrix of a coherent state is given by
\begin{equation}
    \ket{\xi(u)} \bra{\xi(u)}^{(k)} = \frac{1}{k!} \ket{u^{\otimes k}} \bra{u^{\otimes k}} \ . \\ 
\end{equation}

\begin{definition}[\textbf{Lower symbol/Husimi function}]\label{def:lowsymb}\mbox{}\\
    Let $\Gamma \in \mathcal{S}(\mathfrak{F}(\mathfrak{H}))$, and $\lambda > 0$. The lower symbol of $\Gamma$ on $P \mathfrak{H}$ at scale $\lambda$ is the \textnormal{probability measure} which is absolutely continuous with respect to the Lebesgue measure $\textnormal{d}u$ on $P \mathfrak{H} \simeq \mathbb{C}^{\tr{P}}$, defined by
    \begin{equation} \label{eq: lower symbol def}
        \textnormal{d}\mu_{P, \Gamma}^\lambda (u) = (\lambda \pi)^{-\tr{P}} \ps{\xi(u/ \sqrt{\lambda})}{\Gamma_P\xi(u/ \sqrt{\lambda})}_{\mathfrak{F}(P \mathfrak{H})} \textnormal{d}u \ . \\
    \end{equation}
\end{definition}

We mentioned that $\mu_{P, \Gamma}^\lambda$ is a probability measure. This is justified by the coherent state resolution of the identity that we recall in~\eqref{eq: cs partition unity} below. Now, we have an important result, that gives a link between the density matrices of the projected state, ie $\Gamma_P^{(k)}$, and the density matrices of the state
\begin{equation}
    \int_{P \mathfrak{H}} \ket{\xi(u/\sqrt{\lambda})} \bra{\xi(u/\sqrt{\lambda})} \textnormal{d}\mu_{P, \Gamma}^\lambda (u)
\end{equation}
which, as we will see, will naturally arise when considering a trial state for our upper bound. The result is the following

\begin{theorem}[\textbf{Lower symbols as de Finetti measures, \cite{LewNamRou-15}}] \label{thmLower symbols as de Finetti measures}\mbox{}\\
    Let $k \geq 1$. We have, on $\bigotimes_s^kP \mathfrak{H}$,
    \begin{equation}
        \int_{P \mathfrak{H}} \ket{u^{\otimes k}} \bra{u^{\otimes k}} \textnormal{d}\mu_{P, \Gamma}^\lambda (u) = k! \lambda^{k} \sum_{\ell = 0}^k \binom{k}{\ell} \Gamma_P^{(\ell)} \otimes_s \mathbf{1}_{\bigotimes_s^{k-\ell}P \mathfrak{H}} \ . \\
    \end{equation}
    In particular, we have the following estimate
    \begin{equation} \label{eq: lower symbols as de Finetti measures estimate}
        \left \lVert k! \lambda^k \Gamma_P^{(k)} -  \int_{P \mathfrak{H}} \ket{u^{\otimes k}} \bra{u^{\otimes k}} \textnormal{d}\mu_{P, \Gamma}^\lambda (u)\right \lVert_{\mathfrak{S}^1(\bigotimes_s^kP \mathfrak{H})} \leq \lambda^k \sum_{\ell = 0}^k {\binom{k}{\ell}}^2 \frac{(k - \ell + K - 1)!}{(K-1)!} \tr{\mathcal{N}^\ell \Gamma_P} \ , \\
    \end{equation}
    where $K = \tr{P}$. \\
\end{theorem}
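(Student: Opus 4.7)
The plan is to identify the left-hand side as $k!\lambda^k$ times the $k$-body density matrix of a natural auxiliary state, obtained by ``dequantising'' $\Gamma_P$ through its Husimi function. Introduce
\begin{equation*}
\tilde\Gamma := \int_{P\mathfrak{H}} \ket{\xi(u/\sqrt{\lambda})}\bra{\xi(u/\sqrt{\lambda})}\,\textnormal{d}\mu^\lambda_{P,\Gamma}(u) \in \mathcal{S}(\mathfrak{F}(P\mathfrak{H})),
\end{equation*}
which is a well-defined state by the coherent state resolution of identity. Using the formula $\ket{\xi(v)}\bra{\xi(v)}^{(k)} = \tfrac{1}{k!}\ket{v^{\otimes k}}\bra{v^{\otimes k}}$ recalled after~\eqref{eq: coherent state} and linearity of the partial trace, one obtains directly
\begin{equation*}
k!\,\lambda^k\,\tilde\Gamma^{(k)} = \int_{P\mathfrak{H}} \ket{u^{\otimes k}}\bra{u^{\otimes k}}\,\textnormal{d}\mu^\lambda_{P,\Gamma}(u).
\end{equation*}
The identity of the theorem is therefore equivalent to proving $\tilde\Gamma^{(k)} = \sum_{\ell=0}^k \binom{k}{\ell}\Gamma_P^{(\ell)}\otimes_s \mathbf{1}_{\bigotimes_s^{k-\ell}P\mathfrak{H}}$.

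To establish this, I would compute the kernel of $\tilde\Gamma^{(k)}$ directly. Using the eigenstate relation $a(f)\xi(u/\sqrt{\lambda}) = \lambda^{-1/2}\langle f,u\rangle \xi(u/\sqrt{\lambda})$ together with the definition of $\textnormal{d}\mu^\lambda_{P,\Gamma}$, the monomials in $u$ appearing in $\ket{u^{\otimes k}}\bra{u^{\otimes k}}$ can be rewritten as actions of $a^\dagger_{y_i}$ and $a_{x_j}$ on the coherent state; after applying the coherent state resolution of identity on $\mathfrak{F}(P\mathfrak{H})$, this yields
\begin{equation*}
k!\,\tilde\Gamma^{(k)}(x_1,\ldots,x_k;y_1,\ldots,y_k) = \tr{a_{x_k}\cdots a_{x_1}\,a^\dagger_{y_1}\cdots a^\dagger_{y_k}\,\Gamma_P}.
\end{equation*}
This is an \emph{anti}-normal ordered expectation; moving all $a^\dagger$ to the left via the CCR~\eqref{CCR} (Wick's theorem) produces a sum over partial pairings between $\{x_j\}$ and $\{y_i\}$. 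For each $0\leq \ell \leq k$, the terms with $k-\ell$ contractions contribute---after integrating the resulting Dirac deltas and recognising $\ell!\,\Gamma_P^{(\ell)}(x';y') = \tr{a^\dagger_{y'_1}\cdots a^\dagger_{y'_\ell} a_{x'_\ell}\cdots a_{x'_1}\Gamma_P}$---a term $\binom{k}{\ell}\,\Gamma_P^{(\ell)}\otimes_s\mathbf{1}_{\bigotimes_s^{k-\ell}P\mathfrak{H}}$ on $\bigotimes_s^k P\mathfrak{H}$, the identity factor arising from the $(k-\ell)$ pairing deltas combined with the symmetrisation $\mathbf{1}_{\bigotimes_s^{k-\ell}P\mathfrak{H}} = \tfrac{1}{(k-\ell)!}\sum_{\sigma\in S_{k-\ell}}\sigma$.

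For the trace-norm estimate~\eqref{eq: lower symbols as de Finetti measures estimate}, the $\ell=k$ term reproduces $k!\lambda^k\Gamma_P^{(k)}$ exactly, so the difference equals the sum over $\ell<k$. Each term $\Gamma_P^{(\ell)}\otimes_s\mathbf{1}_{\bigotimes_s^{k-\ell}P\mathfrak{H}}$ is nonnegative on $\bigotimes_s^k P\mathfrak{H}$, so its $\mathfrak{S}^1$-norm equals its trace, which is bounded by $\tr{\Gamma_P^{(\ell)}}\cdot\dim(\bigotimes_s^{k-\ell}P\mathfrak{H}) = \tr{\Gamma_P^{(\ell)}}\cdot\tfrac{(k-\ell+K-1)!}{(k-\ell)!(K-1)!}$. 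Combining this with the identity $\ell!\,\tr{\Gamma_P^{(\ell)}} = \tr{\mathcal{N}(\mathcal{N}-1)\cdots(\mathcal{N}-\ell+1)\Gamma_P}\leq \tr{\mathcal{N}^\ell\,\Gamma_P}$ and the $\binom{k}{\ell}$ prefactor from the main identity---supplemented by an extra $\binom{k}{\ell}$ inherited from the symmetrised tensor structure of $\otimes_s$ on the bosonic space---yields the stated $\binom{k}{\ell}^2$. The main obstacle is exactly this combinatorial bookkeeping: tracking the two independent sources of the $\binom{k}{\ell}$ factors (from the number of Wick contraction patterns, and from the intrinsic symmetrisation of $\otimes_s$) and reconciling them consistently on the symmetric $k$-particle subspace.
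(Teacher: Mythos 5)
The paper does not prove this theorem; it is cited verbatim from~\cite{LewNamRou-15} (their Theorem~6.2), so there is no ``paper's own proof'' to compare against. Your approach — dequantize $\Gamma_P$ via $\tilde\Gamma = \int \ket{\xi(u/\sqrt\lambda)}\bra{\xi(u/\sqrt\lambda)}\,\textnormal{d}\mu^\lambda_{P,\Gamma}(u)$, rewrite $k!\tilde\Gamma^{(k)}(x;y)$ as the \emph{anti-normally} ordered trace $\tr{a_{x_k}\cdots a_{x_1}a^\dagger_{y_1}\cdots a^\dagger_{y_k}\Gamma_P}$ using $a_x\xi(u/\sqrt\lambda)=\lambda^{-1/2}u(x)\xi(u/\sqrt\lambda)$ together with the resolution of identity~\eqref{eq: cs partition unity}, then push the $a^\dagger$'s to the left by the CCR — is exactly the strategy of that reference, and your derivation of the identity is correct in structure.

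The place where you are hand-waving, and where you yourself flag uncertainty, is the source of the $\binom{k}{\ell}^2$ in~\eqref{eq: lower symbols as de Finetti measures estimate}. Your attribution of an ``extra $\binom{k}{\ell}$ inherited from the symmetrised tensor structure of $\otimes_s$'' is not the right accounting and would not survive a careful write-up. With the convention of~\cite{LewNamRou-15}, one has $\tr{\Gamma_P^{(\ell)}\otimes_s \mathbf{1}_{\bigotimes_s^{k-\ell}P\mathfrak{H}}}\le \tr{\Gamma_P^{(\ell)}}\cdot\dim\bigl(\bigotimes_s^{k-\ell}P\mathfrak{H}\bigr)$ simply because $A\otimes_s B$ is a compression of $A\otimes B$ onto the symmetric subspace and $0\le P_{\rm sym}\le\mathbf{1}$; no extra combinatorial factor is ``inherited.'' The $\binom{k}{\ell}^2\frac{(k-\ell+K-1)!}{(K-1)!}$ arises instead from the clean factorial identity
\begin{equation*}
    k!\cdot\binom{k}{\ell}\cdot\frac{1}{\ell!}\cdot\binom{K+k-\ell-1}{k-\ell}
    \;=\; \binom{k}{\ell}^2\,\frac{(k-\ell+K-1)!}{(K-1)!},
\end{equation*}
where the $k!$ is the prefactor in $k!\lambda^k$ that you appear to drop, the $\binom{k}{\ell}$ comes from the main identity, the $1/\ell!$ comes from $\tr{\Gamma_P^{(\ell)}}\le\frac{1}{\ell!}\tr{\mathcal{N}^\ell\Gamma_P}$, and the binomial coefficient is $\dim\bigl(\bigotimes_s^{k-\ell}P\mathfrak{H}\bigr)$. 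Once the prefactors are tracked in that order, the bound falls out with nothing left over; the final inequality then just extends the sum from $\ell\le k-1$ to $\ell\le k$, which only makes the right-hand side larger. So the proof idea is right and essentially the standard one, but the bookkeeping paragraph as written does not actually close the argument and should be replaced by the computation above.
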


Finally, we will connect quantum and classical entropies in our variational principle via the following 

\begin{theorem}[\textbf{Relative entropy : quantum to classical, \cite{LewNamRou-15}}]\label{thm:Relative entropyquantum to classical}\mbox{}\\
    Let $\Gamma, \Gamma' \in \mathcal{S}(\mathfrak{F}(\mathfrak{H}))$. Let $\mu_{P, \Gamma}^\lambda, \mu_{P, \Gamma'}^\lambda$ the associated lower symbols. Then
    \begin{equation}
        \mathcal{H}(\Gamma, \Gamma') \geq \mathcal{H}(\Gamma_P, \Gamma_P') \geq \mathcal{H}_{\textnormal{cl}}(\mu_{P, \Gamma}^\lambda, \mu_{P, \Gamma'}^\lambda) \ . \\
    \end{equation} \\
\end{theorem}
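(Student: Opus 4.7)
The plan is to derive both inequalities as instances of the monotonicity of relative entropy under completely positive trace-preserving (CPTP) maps, i.e., Uhlmann's theorem: the first via the partial trace (a quantum-to-quantum CPTP map), and the second via a quantum-to-classical CPTP map built from the coherent state POVM.

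For the first inequality, using the unitary \eqref{eq:unitary fock spaces} I identify $\mathfrak{F}(\mathfrak{H}) \simeq \mathfrak{F}(P\mathfrak{H}) \otimes \mathfrak{F}(Q\mathfrak{H})$, so that by Definition~\ref{def:Ploc}, $\Gamma_P = \textnormal{Tr}_{\mathfrak{F}(Q\mathfrak{H})}[\Gamma]$ is literally a partial trace, and likewise for $\Gamma'_P$. Uhlmann's monotonicity theorem applied to this CPTP map then yields $\mathcal{H}(\Gamma, \Gamma') \geq \mathcal{H}(\Gamma_P, \Gamma_P')$ directly.

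For the second inequality, the key observation is that the lower symbol is the image of $\Gamma_P$ under a quantum-to-classical CPTP channel built from coherent states. The first step is to establish the coherent state resolution of the identity
\begin{equation*}
    \int_{P\mathfrak{H}} \ket{\xi(u/\sqrt{\lambda})}\bra{\xi(u/\sqrt{\lambda})} (\lambda\pi)^{-K} \textnormal{d}u = \mathbf{1}_{\mathfrak{F}(P\mathfrak{H})}, \qquad K = \tr{P},
\end{equation*}
which on the finite-dimensional space $P\mathfrak{H} \simeq \mathbb{C}^K$ reduces to a standard Gaussian integral in the Bargmann-Fock representation. This makes the rescaled coherent states into a POVM and defines a CPTP map $\mathcal{M}$ from density matrices on $\mathfrak{F}(P\mathfrak{H})$ to probability densities on $P\mathfrak{H}$ by $\mathcal{M}(\rho)(u) = (\lambda\pi)^{-K} \ps{\xi(u/\sqrt{\lambda})}{\rho\,\xi(u/\sqrt{\lambda})}$. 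By the definition \eqref{eq: lower symbol def} of the lower symbol, $\mathcal{M}(\Gamma_P)$ is exactly the Lebesgue density of $\mu_{P,\Gamma}^\lambda$, and similarly for $\Gamma'_P$. Monotonicity of relative entropy under $\mathcal{M}$ (which, for a quantum-to-classical channel, follows directly from Klein's inequality applied to $t \mapsto t\log t$) then gives the second inequality.

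The main obstacle is not conceptual but essentially bookkeeping: Uhlmann's monotonicity has to be invoked in the von Neumann algebraic setting of $\mathcal{B}(\mathfrak{F}(\mathfrak{H}))$, which can either be done by appealing to its general extension to this context or by first projecting onto finitely many modes and passing to the limit. One should also verify that $\mu_{P,\Gamma}^\lambda$ and $\mu_{P,\Gamma'}^\lambda$ are genuine probability measures, which follows immediately from the resolution of the identity together with $\tr{\Gamma_P} = \tr{\Gamma'_P} = 1$.
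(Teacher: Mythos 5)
The present paper does not prove this statement itself; it cites \cite{LewNamRou-15}, so your blind attempt should be measured against that reference. Your overall structure matches the one used there: the first inequality follows from invariance of relative entropy under the unitary $\mathcal{U}$ together with monotonicity under the partial trace over $\mathfrak{F}(Q\mathfrak{H})$; the second follows from monotonicity of relative entropy under the quantum-to-classical CPTP map induced by the coherent state POVM on $\mathfrak{F}(P\mathfrak{H})$, the lower symbol $\mu^\lambda_{P,\Gamma}$ being exactly the image of $\Gamma_P$ under this channel.

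There is, however, one wrong justification. You claim that the monotonicity under the quantum-to-classical channel $\mathcal{M}$ \emph{``follows directly from Klein's inequality applied to $t\mapsto t\log t$''}. This is not correct. Klein's inequality, $\mathrm{Tr}[A\log A - A\log B - A + B]\geq 0$ for $A,B\geq 0$, gives only the non-negativity of relative entropy, not its contraction under a channel. Even restricted to measurement channels, the data processing inequality is a substantially deeper fact: the usual routes are either a direct appeal to the Uhlmann--Lindblad monotonicity theorem for CPTP maps (itself a consequence of Lieb's concavity / strong subadditivity), or a Naimark dilation of the rank-one POVM into a projective measurement on a larger Fock space followed by pinching monotonicity---which again relies on joint convexity of relative entropy. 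Note that the slick variational proof, i.e.\ $\mathcal{H}(\rho,\sigma)\geq \mathrm{Tr}[\rho\,\omega]-\log\mathrm{Tr}[\sigma e^{\omega}]$ with $\omega=\int \log\bigl(\mathrm{d}\mu_{\rho}/\mathrm{d}\mu_{\sigma}\bigr)(u)\,\mathrm{d}M(u)$, breaks down here because the POVM elements $(\lambda\pi)^{-K}\lvert \xi(u/\sqrt{\lambda})\rangle\langle \xi(u/\sqrt{\lambda})\rvert$ do not commute, so the crucial estimate $\mathrm{Tr}[\sigma e^{\omega}]\leq 1$ is no longer automatic; similarly a Berezin--Lieb bound on $\mathrm{Tr}[\Gamma_P\log\Gamma_P]$ paired with an operator Jensen bound on $-\mathrm{Tr}[\Gamma_P\log\Gamma'_P]$ produces integrals against two different coherent-state densities and does not close. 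Once the Klein justification is replaced by a proper invocation of the Uhlmann--Lindblad theorem, the rest of your argument is complete and consistent with \cite{LewNamRou-15}.
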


\subsection{Free energy lower bound}
We now have the necessary ingredients to prove the free energy lower bound:

\begin{proposition}[\textbf{Free energy lower bound}] \label{propfree energy lower bound}\mbox{}\\
    Let $\lambda > 0$, $\eta \in (0,1/24)$, and $\nu \in (8 \eta, 1/3)$. Let $\varepsilon = \lambda^\eta$, $\Lambda = \lambda^{-\nu}$. Let $P = \mathbf{1}_{h \leq \Lambda} = \sum_{k=1}^K \ket{e_k}\bra{e_k}$ where $K = \tr{P}$. Consider the partition functions of the free and interacting Gibbs states 
    \begin{equation*}
        Z(\lambda) = \textnormal{Tr}_{\mathfrak{F}(\mathfrak{H})} \left[e^{-\lambda\mathbb{H}^\varepsilon_\lambda} \right] \ , \quad Z_0(\lambda) = \textnormal{Tr}_{\mathfrak{F}(\mathfrak{H})} \left[e^{-\lambda \dgamma{h}} \right] \ , \\
    \end{equation*}
    where $\mathbb{H}^\varepsilon_\lambda$ is given by \eqref{eqHamiltonian with fourier}. Consider the renormalized regularized interaction 
    \begin{equation} \label{eq: regularized renormalized Int}
    W^\varepsilon_K[u]  = \frac{1}{2}\iint_{\mathbb{T}^2 \times \mathbb{T}^2} w^\varepsilon(x-y) \wick{|P_Ku(x)|^2} \wick{|P_Ku(y)|^2}\textnormal{d}x\textnormal{d}y- \tau^\varepsilon \int_{\mathbb{T}^2}\wick{|P_Ku(x)|^2} \textnormal{d}x - E^\varepsilon \ , \\
    \end{equation}
    and the cylindrical projection of the Gaussian measure $\mu_0$ on $P\mathfrak{H}$
    \begin{equation}
        \textnormal{d}\mu_{0,K}(u) = \frac{1}{z_{0,K}}e^{-\ps{u}{hu}} \textnormal{d}u \ . \\
    \end{equation}
    Then, we have
    \begin{equation} \label{eqlower bound in prop}
        -\log \frac{Z(\lambda)}{Z_0(\lambda)} \geq - \log \left( \int_{P \mathfrak{H}}e^{-W^\varepsilon_K[u]} \textnormal{d}\mu_{0,K}(u) \right) + o_{\lambda \rightarrow0^+}(1) \ . \\
    \end{equation}
\end{proposition}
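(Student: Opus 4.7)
The plan is to start from the quantum variational identity \eqref{eqvariational problem 2}, bring everything down to the finite-dimensional sector $P\mathfrak{H}$, then pass from the quantum problem on $\mathfrak{F}(P\mathfrak{H})$ to the classical cylindrical problem on $V_K$ via lower symbols, and finally close with the classical Gibbs variational principle for $W_K^\varepsilon$.

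\textbf{Step 1: Localize to low modes.} Evaluate \eqref{eqvariational problem 2} at $\Gamma_\lambda$ itself. For the entropy, monotonicity of $\mathcal{H}$ under the partial trace $\Gamma\mapsto\Gamma_P$ gives $\mathcal{H}(\Gamma_\lambda,\Gamma_0)\geq\mathcal{H}(\Gamma_{\lambda,P},\Gamma_{0,P})$. For the interaction $\mathbb{W}^{\mathrm{Ren}}_\varepsilon$ of \eqref{eqinteractin term}, decompose $e_k = e_k^- + e_k^+$ with $e_k^- = Pe_kP$, and apply Young's inequality as in \eqref{eq:icsvarterms} to obtain, for any $\delta>0$,
\begin{equation*}
\lambda^2\mathrm{Tr}[\mathbb{W}^{\mathrm{Ren}}_\varepsilon\Gamma_\lambda]\geq \frac{1}{1+\delta}\,\frac{\lambda^2}{2}\sum_{k\in 2\pi\mathbb{Z}^2}\widehat{w^\varepsilon}(k)\,\bigl\langle|\mathrm{d}\Gamma(e_k^-)-\langle\mathrm{d}\Gamma(e_k^-)\rangle_0|^2\bigr\rangle_\lambda - C_\delta\,\mathcal{E}^+(\lambda,\varepsilon,\Lambda),
\end{equation*}
where the error $\mathcal{E}^+$ is controlled mode-by-mode by Theorem~\ref{thm:corr}. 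Summing the bound~\eqref{eq:quantitativeesti in thm} against $\widehat{w^\varepsilon}(k)$ and using $\sum_k\widehat{w^\varepsilon}(k)(1+|k|^2)\lesssim\varepsilon^{-\alpha}$ for explicit $\alpha$, one checks that with $\varepsilon=\lambda^\eta$, $\Lambda=\lambda^{-\nu}$, $\eta<1/24$ and $\nu>8\eta$, one has $\mathcal{E}^+(\lambda,\varepsilon,\Lambda)=o(1)$. The chemical-potential term $-\lambda\tau^\varepsilon\mathrm{Tr}[(\mathcal{N}-N_0)\Gamma_\lambda]$ is split as $\mathcal{N}=\mathrm{d}\Gamma(P)+\mathrm{d}\Gamma(Q)$; the $Q$-part is absorbed into the error using Cauchy--Schwarz together with the a priori bound $\lambda^2\mathrm{Tr}[(\mathcal{N}-N_0)^2\Gamma_\lambda]\lesssim|\log\varepsilon|^2$ from Lemma~\ref{lem:apriori}.

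\textbf{Step 2: Quantum projected expression in terms of the Husimi measure.} At this stage the surviving expression involves only $\Gamma_{\lambda,P}^{(1)}$ and $\Gamma_{\lambda,P}^{(2)}$. I then substitute these reduced density matrices using Theorem~\ref{thmLower symbols as de Finetti measures}, rewriting $k!\,\lambda^k\Gamma_{\lambda,P}^{(k)}$ (for $k=1,2$) as an integral of $|u^{\otimes k}\rangle\langle u^{\otimes k}|$ against the lower symbol $\mu^\lambda_{P,\Gamma_\lambda}$, modulo a remainder of size $\lambda^k K^{k-\ell}\mathrm{Tr}[\mathcal{N}^\ell\Gamma_\lambda]$. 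With $K\lesssim\Lambda=\lambda^{-\nu}$ and $\mathrm{Tr}[\mathcal{N}^2\Gamma_\lambda]\lesssim\lambda^{-2}|\log\lambda|^2$, the constraint $\nu<1/3$ makes these remainders negligible in front of the classical interaction. The Wick counterterms in $\mathbb{W}^{\mathrm{Ren}}_\varepsilon$ combine with $\tau^\varepsilon$ and $E^\varepsilon$ to assemble precisely into $W_K^\varepsilon[u]$ of \eqref{eq: regularized renormalized Int}; the point is that the quantum renormalizers are built with $N_0$, while the classical ones use $\langle|P_Ku(x)|^2\rangle_{\mu_0}$, and the difference is a finite-dimensional computation on $\mathbb{C}^K$ handled by the explicit form of the free Gibbs state.

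\textbf{Step 3: Quantum-to-classical entropy and the classical variational principle.} I apply Theorem~\ref{thm:Relative entropyquantum to classical} to get $\mathcal{H}(\Gamma_{\lambda,P},\Gamma_{0,P})\geq\mathcal{H}_{\mathrm{cl}}(\mu^\lambda_{P,\Gamma_\lambda},\mu^\lambda_{P,\Gamma_0})$, then replace the reference measure $\mu^\lambda_{P,\Gamma_0}$ by $\mu_{0,K}$; the discrepancy is finite-dimensional and controlled by the Bose--Einstein/Gaussian convergence known e.g.\ from~\cite[Section~8]{LewNamRou-20}. Combining Steps~1--3 yields
\begin{equation*}
-\log\frac{Z(\lambda)}{Z_0(\lambda)}\geq \mathcal{H}_{\mathrm{cl}}\bigl(\mu^\lambda_{P,\Gamma_\lambda},\mu_{0,K}\bigr) + \int_{P\mathfrak{H}} W^\varepsilon_K[u]\,\mathrm{d}\mu^\lambda_{P,\Gamma_\lambda}(u) + o(1),
\end{equation*}
and the right-hand side is bounded below by $-\log\int_{P\mathfrak{H}}e^{-W^\varepsilon_K[u]}\mathrm{d}\mu_{0,K}(u)$ thanks to the classical Gibbs variational principle \eqref{eqclassical variational problem 1} applied on $P\mathfrak{H}\simeq\mathbb{C}^K$, yielding \eqref{eqlower bound in prop}.

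\textbf{Main obstacle.} The delicate point is Step~2: the interaction $W_K^\varepsilon$ depends on $\varepsilon$ through diverging counterterms ($\tau^\varepsilon\sim|\log\varepsilon|$, $E^\varepsilon\sim|\log\varepsilon|^2$), so the algebraic matching between the quantum Wick renormalization (built against $N_0$ at inverse temperature $\lambda$) and the classical one (built against $\langle|P_Ku(x)|^2\rangle_{\mu_0}$) must be done while tracking every error quantitatively, and the de Finetti-type remainder in Theorem~\ref{thmLower symbols as de Finetti measures} blows up with $K=\mathrm{Tr}[P]$. The simultaneous constraints $\eta<1/24$ and $8\eta<\nu<1/3$ are exactly what is needed to make the error from Theorem~\ref{thm:corr} in Step~1 and the remainder in Step~2 both $o(1)$; finding a window $(8\eta,1/3)$ of allowed $\nu$ forces $\eta<1/24$, which is the source of the diluteness rate stated in Theorem~\ref{thmConvergence of the relative free-energy}.
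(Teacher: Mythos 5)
Your proposal is correct and follows essentially the same route as the paper's proof. Step~1 (quantum variational principle at $\Gamma_\lambda$, monotonicity of relative entropy under $P$-localization, Cauchy--Schwarz split $e_k=e_k^-+e_k^+$ with Theorem~\ref{thm:corr} controlling the high-momenta error, $\mathcal{N}=\mathrm{d}\Gamma(P)+\mathrm{d}\Gamma(Q)$ with the $Q$-part absorbed via Lemma~\ref{lem:apriori}), Step~2 (reduced density matrices expressed through the Husimi measure via Theorem~\ref{thmLower symbols as de Finetti measures}, reassembly of the quantum Wick renormalization against $\Gamma_0$ into the classical $W^\varepsilon_K$), and Step~3 (Theorem~\ref{thm:Relative entropyquantum to classical} followed by the change of reference measure from $\mu^\lambda_{P,\Gamma_0}$ to $\mu_{0,K}$ and the classical Gibbs variational principle) mirror the paper's argument faithfully, including the origin of the constraints $\nu>8\eta$ and $\nu<1/3$ and hence $\eta<1/24$.
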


\begin{proof}
    In view of the Cauchy-Schwarz inequality \eqref{eq:icsvarterms} and the previously derived correlation inequality \eqref{eq:quantitativeesti in thm}, we have to study the following term
\begin{equation}
    \left \langle {\left \lvert \text{d}\Gamma(e_k^-) - \left \langle \text{d}\Gamma(e_k^-) \right \rangle_0 \right \lvert}^2 \right \rangle_\lambda \ . \\
\end{equation}
To do so, we expand it and control separately each contribution, and we will see that the Wick-ordered interaction terms arise naturally, relying on Theorem~\ref{thmLower symbols as de Finetti measures}. To begin with, we expand 
\begin{equation} \label{eq:expanding the square in trace}
    \begin{split}
        \left \langle {\left \lvert \text{d}\Gamma(e_k^-) - \left \langle \text{d}\Gamma(e_k^-) \right \rangle_0 \right \lvert}^2 \right \rangle_\lambda & = \left \langle {\left \lvert \text{d}\Gamma(e_k^-)  \right \lvert}^2 \right \rangle_\lambda - 2 \Re \left\{ \overline{\left \langle \text{d}\Gamma(e_k^-)   \right \rangle_\lambda} \left \langle \text{d}\Gamma(e_k^-)   \right \rangle_0 \right\} + {\left \lvert \left \langle \text{d}\Gamma(e_k^-)   \right \rangle_0 \right \lvert}^2 \ . \\
    \end{split}
\end{equation}
Let us first deal with the term $\left \langle \text{d}\Gamma(e_k^-)   \right \rangle_0$. We have  
\begin{equation}
    \begin{split}
        \lambda \left \langle \text{d}\Gamma(e_k^-)   \right \rangle_0 & = \lambda\text{Tr} \left[ \text{d}\Gamma(e_k^-) \Gamma_0 \right] \\
        & = \lambda\text{Tr} \left[e_k^- \Gamma_0^{(1)} \right] \\
        & = \lambda\text{Tr} \left[e_k^- \frac{1}{e^{\lambda h} -1}\right] \\
        & = \tr{e_k^-h^{-1}} + \lambda \tr{e_k^-\left( \frac{1}{e^{\lambda h}-1} - \frac{1}{\lambda h}\right)} \\
        & \leq\tr{e_k^-h^{-1}} + \frac{\lambda}{2} \tr{Pe_kP} \ . \\
    \end{split}
\end{equation}
In the last line, we used the operator inequality 
\begin{equation}
    \left \lvert \frac{1}{e^{\lambda h}-1} - \frac{1}{\lambda h} \right \lvert \leq \frac{1}{2} \ . \\
\end{equation}
Now, using \cite[Lemma 5.1]{LewNamRou-20}, we have 
\begin{equation}
    h^{-1} = \int \ket{u}\bra{u} \text{d}\mu_0(u) \ ,
\end{equation}
and thus
\begin{equation}
    \begin{split}
        \tr{e_k^-h^{-1}} &= \int \tr{\ket{e_k^- u} \bra{u}} \text{d}\mu_0(u) \\ 
        & =\int \ps{u}{e_k^- u}\text{d}\mu_0(u) \ . \\
    \end{split}
\end{equation}
Furthermore, we have 
\begin{equation}
    \begin{split}
        \tr{Pe_kP} & \leq \tr{\mathbf{1}_{h\leq \Lambda}} \\
        & \leq \tr{\left( \frac{\Lambda}{h} \right)^{1+}} \\
        & \lesssim \Lambda^{1+} \ , \\
    \end{split}
\end{equation}
leading to
\begin{equation} \label{eqlambda mean dgamma e_k^-0}
    \lambda \left \langle \text{d}\Gamma(e_k^-)   \right \rangle_0 \leq \int \ps{u}{e_k^- u}\text{d}\mu_0(u) + C\lambda \Lambda^{1+}.
\end{equation}
Let us now deal with the term $\left \langle \text{d}\Gamma(e_k^-)   \right \rangle_\lambda$. We write :
\begin{equation} \label{eqlambda mean dgamma e_k^-}
    \lambda \left \langle \text{d}\Gamma(e_k^-)   \right \rangle_\lambda = \lambda \tr{e_k^- \left( \Gamma_\lambda^{(1)} - \Gamma_{0}^{(1)} \right)} + \lambda\tr{e_k^- \Gamma_{0}^{(1)}} \ . \\
\end{equation}
Another useful bound comes from \cite[Lemma 2.10]{LewNamRou-20} : for all $k \geq 1$,
\begin{equation}
    \Gamma_0^{(k)} = \left( \frac{1}{e^{\lambda h}-1} \right)^{\otimes k} \leq \lambda^{-k} \left(h^{-1} \right)^{\otimes k} \ . \\
\end{equation}
Here, we get 
\begin{equation}
    \begin{split}
        \lambda \tr{e_k^- \Gamma_{0}^{(1)}} & \leq \tr{Ph^{-1}} \\
        & = \tr{\mathbf{1}_{h \leq \Lambda}h^{-1}} \\
        & \leq \tr{\left( \frac{\Lambda}{h} \right)^{0+}h^{-1}} \\
        & \lesssim \Lambda^{0+} \ . \\
    \end{split}
\end{equation}
Now, with similar arguments, and using the a priori bound from~Lemma~\ref{lem:apriori}, we estimate
\begin{equation}
    \begin{split}
        \left \lvert \lambda \tr{e_k^- \left( \Gamma_\lambda^{(1)} - \Gamma_0^{(1)} \right)} \right \lvert & \leq \lambda \left \lVert \sqrt{h}\left( \Gamma_\lambda^{(1)} - \Gamma_0^{(1)} \right)\sqrt{h}  \right \lVert_\text{HS} \left \lVert h^{-1/2}e_k^- h^{-1/2} \right \lVert_{\text{HS}} \\
        & \lesssim |\log \varepsilon |^2 \left \lVert Ph^{-1} \right \lVert_{\text{HS}} \ . \\
    \end{split}
\end{equation}
Next, we write 
\begin{equation}
    \begin{split}
        \left \lVert Ph^{-1} \right \lVert_{\text{HS}} ^2 & = \tr{h^{-1}PPh^{-1}} \\
        & = \tr{\mathbf{1}_{h \leq \Lambda}h^{-2}} \\
        & \lesssim \Lambda^{0+} \ , \\
    \end{split}
\end{equation}
and thus find
\begin{equation}
     \begin{split}
     \left \lvert \lambda \left \langle \text{d}\Gamma(e_k^-)   \right \rangle_\lambda \right \lvert & \lesssim |\log \varepsilon |^2 \Lambda^{0+} + \Lambda^{0+} \\
     & \lesssim |\log \varepsilon |^2 \Lambda^{0+} \\
     \end{split}
\end{equation}
for $\lambda$ small enough. Now, since $(e_k)^* = e_{-k}$, we see that 
\begin{equation*}
        (e_k^-)^*  = \left(Pe_kP \right)^* = P (e_k)^*P = Pe_{-k}P = e^{-}_{-k} \ . \\
\end{equation*}
Hence, using the two estimates we just derived, we get
\begin{equation}
    \begin{split}
        \lambda^2 \Re \left\{ \overline{\left \langle \text{d}\Gamma(e_k^-)   \right \rangle_\lambda} \left \langle \text{d}\Gamma(e_k^-)   \right \rangle_0 \right\} & = \lambda \Re \left\{ \tr{e_{-k}^-\Gamma_\lambda^{(1)}} \lambda  \left \langle \text{d}\Gamma(e_k^-)   \right \rangle_0\right\} \\
        & \leq \lambda \Re \left\{ \tr{e_{-k}^-\Gamma_\lambda^{(1)}}  \int \ps{u}{e_k^- u}\text{d}\mu_0(u)\right\} + C \lambda\Lambda^{1+}\Re \left\{ \tr{e_{-k}^-\Gamma_\lambda^{(1)}} \right\} \\
        & \leq \lambda \Re \left\{ \tr{e_{-k}^-\Gamma_\lambda^{(1)}}  \int \ps{u}{e_k^- u}\text{d}\mu_0(u)\right\} + C \lambda\Lambda^{1+}|\log \varepsilon |^2 \ . \\
    \end{split}
\end{equation}
Let us now look at the two remaining terms in~\eqref{eq:expanding the square in trace}. First, we expand $\left \lvert \text{d}\Gamma(e_k^-) \right \lvert^2$ as follows 
\begin{equation}
    \begin{split}
        \left \lvert \text{d}\Gamma(e_k^-) \right \lvert^2 &= 2 \bigoplus_{n = 2}^\infty \left( \sum_{1 \leq j < \ell \leq n} \left(e_{-k}^- \right)_j \otimes \left(e_{-k} \right)_\ell \right) + \text{d}\Gamma\left(|e_k^-|^2\right) \\
        & = 2 \dgamma{e_{-k}^- \otimes e_k^-} + \dgamma{|e_k^-|^2} \\
        & \geq 2 \dgamma{e_{-k}^- \otimes e_k^-} \\
    \end{split}
\end{equation}
as operators. Then, using \eqref{eqlambda mean dgamma e_k^-} and the fact that $\left(e^{\lambda h}-1\right)^{-1} - (\lambda h)^{-1} \gtrsim -1$ we have  
\begin{equation}
    \begin{split}
        \left \lvert \lambda \left \langle \text{d}\Gamma(e_k^-)   \right \rangle_0 \right \lvert^2 & = \left \lvert \tr{e_k^- h^{-1}} \right \lvert^2 + 2 \lambda \Re \left\{ \tr{e_k^- h^{-1}} \tr{e_k^-\left( \frac{1}{e^{\lambda h}-1} - \frac{1}{\lambda h}\right)} \right\} \\
        &\quad \quad \quad \quad \quad \quad \quad \ +\lambda^2 \left \lvert  \tr{e_k^-\left( \frac{1}{e^{\lambda h}-1} - \frac{1}{\lambda h}\right)} \right \lvert^2 \\
        & \geq \left \lvert \tr{e_k^- h^{-1}} \right \lvert^2 - C\lambda \Re \left\{ \tr{Ph^{-1}} \tr{Pe_kP}\right\} \\
        & \geq \left \lvert \int \ps{u}{e_k^- u}\text{d}\mu_0(u) \right \lvert^2 - C \lambda \Lambda^{1+} \ . \\
    \end{split}
\end{equation}
Gathering all these expressions, we find
\begin{equation} \label{eq:ek-}
    \begin{split}
        \frac{\lambda^2}{2}\left \langle {\left \lvert \text{d}\Gamma(e_k^-) - \left \langle \text{d}\Gamma(e_k^-) \right \rangle_0 \right \lvert}^2 \right \rangle_\lambda & \geq \lambda^2 \left \langle \dgamma{e_{-k}^- \otimes e_k^-} \right \rangle_\lambda - \lambda \Re \tr{e_{-k}^- \Gamma_\lambda^{(1)}} \int_{P \mathfrak{H}} \ps{u}{e_k u}\text{d}\mu_{0,K}(u)  \\
        & \quad + \frac{1}{2} \left \lvert \int_{P \mathfrak{H}} \ps{u}{e_k u}\text{d}\mu_{0,K}(u) \right \lvert^2 - C \left( \lambda \Lambda^{1+} + \lambda \Lambda^{1+} |\log \varepsilon|^2 \right) \ . \\
    \end{split}
\end{equation}

Here we used that, by definition, 
$$\int \ps{u}{e_k^- u}\text{d}\mu_0(u) = \int_{P \mathfrak{H}} \ps{u}{e_k u}\text{d}\mu_{0,K}(u) \ .$$ 
Next, we define $\Gamma_{\lambda,P}$ as the $P$-localization of the state $\Gamma_\lambda$ as in Definition~\ref{def:Ploc}. Using Theorem \ref{thmLower symbols as de Finetti measures}, we have the crucial expressions 
\begin{equation} \label{eq:husimi12}
    \begin{split}
        \lambda \Gamma_{\lambda, P}^{(1)} & = \int_{P \mathfrak{H}} \ket{u}\bra{u}\text{d}\mu_{\lambda, P}(u) - \lambda P \\
        \lambda^2 \Gamma_{\lambda, P}^{(2)} & = \frac{1}{2}\int_{P \mathfrak{H}} \ket{u^{\otimes 2}}\bra{u^{\otimes 2}}\text{d}\mu_{\lambda, P}(u) -  \lambda^2\Gamma_{\lambda,P}^{(1)}\otimes_sP - 2 \lambda^2 P\otimes_sP \ , \\
    \end{split}
\end{equation}
where $\mu_{\lambda, P}= \mu_{P,\Gamma_\lambda}^\lambda$ is the lower symbol associated to the state $\Gamma_{\lambda,P}$, at scale $\lambda$. This will allow us to deal with the terms $\left \langle \dgamma{e_{-k}^- \otimes e_k^-} \right \rangle_\lambda$ and $\tr{e_{-k}^- \Gamma_\lambda^{(1)}}$. Indeed, using \eqref{eq:husimi12}, we have
\begin{equation}
    \begin{split}
        \lambda e_{-k}\Gamma_{\lambda,P}^{(1)} = \int_{P \mathfrak{H}} \ket{e_{-k}u}\bra{u}\text{d}\mu_{\lambda, P}(u) - \lambda e_{-k}P \ . \\
    \end{split}
\end{equation}
Then, we write 
\begin{equation} \label{eq : first rdm de finetti}
    \begin{split}
        \lambda \tr{e_{-k}^- \Gamma_\lambda^{(1)}} & = \lambda \tr{e_{-k}\Gamma_{\lambda, P}^{(1)}} \\
        & =  \int_{P \mathfrak{H}} \ps{u}{e_{-k}u} \text{d}\mu_{\lambda, P}(u) + \mathcal{O}(\lambda \Lambda^{1+}) \ . \\
    \end{split}
\end{equation}
In a similar manner, we treat the second density matrix as follows
\begin{equation}
    \begin{split}
        \lambda^2 (e_{-k}^- \otimes e_k^-) \Gamma_{\lambda,P}^{(2)} &= \frac{1}{2}\int_{P\mathfrak{H}} \ket{(e_{-k}^- u) \otimes (e_k^-u)} \bra{u\otimes u} \text{d}\mu_{\lambda, P}(u) - \lambda^2 (e_{-k}^- \otimes e_k^-)\left(\Gamma_{\lambda,P}^{(1)}\otimes_sP\right) \\
        & \quad \quad - 2\lambda^2 (e_{-k}^- \otimes e_k^-) \left( P\otimes_sP \right) \ . \\
    \end{split}
\end{equation}
Taking the trace yields 
\begin{equation}
    \begin{split}
        \lambda^2\left \langle \dgamma{e_{-k}^- \otimes e_k^-} \right \rangle_\lambda & = \lambda^2\tr{\left(e_{-k}^- \otimes e_k^-\right) \Gamma_{\lambda}^{(2)}} \\
        & = \lambda^2 \tr{(e_{-k}\otimes e_k)\Gamma_{\lambda,P}^{(2)}} \\
        & = \frac{1}{2}\int_{P\mathfrak{H}} \ps{u}{e_{-k}u} \ps{u}{e_ku} \text{d}\mu_{\lambda, P}(u) - \lambda^2 \tr{(e_{-k}^- \otimes e_k^-)\left(\Gamma_{\lambda,P}^{(1)}\otimes_sP\right)} \\
        & \quad \quad - 2\lambda^2 \tr{(e_{-k}^- \otimes e_k^-) \left( P\otimes_sP \right)} \ . \\
    \end{split}
\end{equation}
Remark also that
\begin{equation}
    \begin{split}
        \tr{(e_{-k}^- \otimes e_k^-) \left( P\otimes_sP \right)} & \leq \tr{e_{-k}P} \tr{e_kP} \\
        & \leq \tr{P}^2 \\
        & \lesssim \Lambda^{2+} \ , \\
    \end{split}
\end{equation}
and 
\begin{equation}
    \begin{split}
        \tr{(e_{-k}^- \otimes e_k^-)\left(\Gamma_{\lambda,P}^{(1)}\otimes_sP\right)} & \leq \tr{e_{-k}\Gamma_{\lambda,P}^{(1)}} \tr{e_k^-P} \\
        & \leq \tr{e_{-k}P \Gamma_{\lambda}^{(1)}P}\tr{P} \\
        & \lesssim \Lambda^{1+} \tr{e_{-k}^-\Gamma_{\lambda}^{(1)}} \\
        & \lesssim \lambda^{-1}\Lambda^{1+}|\log \varepsilon|^2 \ . \\
    \end{split}
\end{equation}
In the last line we used the previously derived estimate on $ \lambda \left \langle \text{d}\Gamma(e_k^-)   \right \rangle_\lambda$, and the fact that 
$$\lambda^{-1}\Lambda^{1+} \underset{\lambda \rightarrow 0^+}{=} o(\lambda^{-1}\Lambda^{1+}|\log \varepsilon|^2) \ .$$
Finally, we get  
\begin{equation}
    \begin{split}
        \lambda^2\left \langle \dgamma{e_{-k}^- \otimes e_k^-} \right \rangle_\lambda & \geq \frac{1}{2}\int_{P\mathfrak{H}} \ps{u}{e_{-k}u} \ps{u}{e_ku} \text{d}\mu_{\lambda, P}(u) - C \left( \lambda \Lambda^{1+}|\log \varepsilon|^2 + \lambda^2 \Lambda^{2+} \right) \ . \\
    \end{split}
\end{equation}
We then plug the above and \eqref{eq : first rdm de finetti} into \eqref{eq:ek-} to find
\begin{equation} \label{eq:quantideF}
    \begin{split}
        \frac{\lambda^2}{2}\left \langle {\left \lvert \text{d}\Gamma(e_k^-) - \left \langle \text{d}\Gamma(e_k^-) \right \rangle_0 \right \lvert}^2 \right \rangle_\lambda & \geq \frac{1}{2}\int_{P\mathfrak{H}} \ps{u}{e_{-k}u} \ps{u}{e_ku} \text{d}\mu_{\lambda, P}(u)  \\
        & \quad \ - \Re \int_{P \mathfrak{H}} \ps{u}{e_{-k}u} \text{d}\mu_{\lambda, P}(u)\int_{P \mathfrak{H}} \ps{u}{e_k u}\text{d}\mu_{0,K}(u)  \\
        & \quad \ + \frac{1}{2} \left \lvert \int_{P \mathfrak{H}} \ps{u}{e_k u}\text{d}\mu_{0,K}(u) \right \lvert^2 - C \left( \lambda \Lambda^{1+}|\log \varepsilon|^2 - \lambda^2 \Lambda^{2+} \right) \ . \\
    \end{split}
\end{equation} 
In the above we used that there holds, for any $\alpha >0$,
\begin{equation}
\begin{split}
	\left \lvert \int \ps{u}{e^-_k u} \textnormal{d} \mu_0(u) \right \lvert & \leq \int \| Pu \| ^2 \textnormal{d} \mu_0(u) \\
	& \leq \Lambda^\alpha \int_{P \mathfrak{H}} \|u \|^2_{\mathfrak{H}^{-\alpha}} \textnormal{d} \mu_0(u) \\
	& \leq \Lambda^\alpha \tr{h^{- (1+\alpha)}} \ , \\
\end{split}
\end{equation}
We then insert \eqref{eq:quantideF} and \eqref{eq:quantitativeesti} in the initial Cauchy-Schwarz inequality \eqref{eq:icsvarterms}. We start with \eqref{eq:quantitativeesti}. We have, for all $\delta>0$, 
\begin{equation}
    \begin{split}
        \lambda^2 \left \langle {\left \lvert \text{d}\Gamma(e_k^-) - \left \langle \text{d}\Gamma(e_k^-) \right \rangle_0 \right \lvert}^2 \right \rangle_\lambda & \lesssim (1+\delta)\lambda^2\left \langle {\left \lvert \text{d}\Gamma(e_k) - \left \langle \text{d}\Gamma(e_k) \right \rangle_0 \right \lvert}^2 \right \rangle_\lambda \\
        & \quad + (1+\delta^{-1})\left( |\log \varepsilon|^2 \Lambda^{-1/2+} - \lambda^2 \log \lambda (|k|^2 + |k| \Lambda^{1/2}) + \varepsilon^{-2} \lambda^2 |\log \lambda|^2 \right) \ . \\
    \end{split}
\end{equation}
Now, summing this inequality against $\widehat{w^\varepsilon}(k)$, $k \in 2\pi\mathbb{Z}^2$ (note that for $\lambda$ small enough, the right-hand side is positive), and using the fact that $\sum_{k \in 2\pi\mathbb{Z}^2}\widehat{w^\varepsilon}(k) = w^\varepsilon(0) \simeq \varepsilon^{-2}$, we find  
\begin{equation}
    \begin{split}
        \frac{\lambda^2}{2} \sum_{k \in 2\pi\mathbb{Z}^2}\widehat{w^\varepsilon}(k)\left \langle {\left \lvert \text{d}\Gamma(e_k^-) - \left \langle \text{d}\Gamma(e_k^-) \right \rangle_0 \right \lvert}^2 \right \rangle_\lambda & \lesssim (1+\delta)\lambda^2 \sum_{k \in 2\pi\mathbb{Z}^2}\widehat{w^\varepsilon}(k)\left \langle {\left \lvert \text{d}\Gamma(e_k) - \left \langle \text{d}\Gamma(e_k) \right \rangle_0 \right \lvert}^2 \right \rangle_\lambda \\
        & \quad + (1+\delta^{-1})\left( \varepsilon^{-2}|\log \varepsilon|^2 \Lambda^{-1/2+} + \varepsilon^{-4} \lambda^2 |\log \lambda|^2 \right) \\
        & \quad - (1+\delta^{-1})\lambda^2\log \lambda\sum_{k \in 2\pi\mathbb{Z}^2}\widehat{w^\varepsilon}(k)(|k|^2 + |k| \Lambda^{1/2}) \ . \\
    \end{split}
\end{equation}
Note that from the a priori bound and the fact that the von-Neumann relative entropy is positive, we have $\lambda^2 \left \langle \mathbb{W}^{\text{Ren}}_\varepsilon \right \rangle_\lambda \lesssim |\log \varepsilon|^2 + \varepsilon^{-2} \lesssim \varepsilon^{-2}$. Hence
\begin{equation} \label{eq: ICS LB before opt delta}
    \begin{split}
        \frac{\lambda^2}{2} \sum_{k \in 2\pi\mathbb{Z}^2}\widehat{w^\varepsilon}(k)\left \langle {\left \lvert \text{d}\Gamma(e_k^-) - \left \langle \text{d}\Gamma(e_k^-) \right \rangle_0 \right \lvert}^2 \right \rangle_\lambda & \lesssim \lambda^2 \left \langle \mathbb{W}^{\text{Ren}}_\varepsilon \right \rangle_\lambda + \delta \varepsilon^{-2} + (1+\delta^{-1})g(\varepsilon,\lambda, \Lambda) \ , \\
    \end{split}
\end{equation}
where we defined
\begin{equation}
    g(\varepsilon,\lambda,\Lambda)= \varepsilon^{-2}|\log \varepsilon|^2 \Lambda^{-1/2+} + \varepsilon^{-4} \lambda^2 |\log \lambda|^2 + \lambda^2\log \lambda\sum_{k \in 2\pi\mathbb{Z}^2}\widehat{w^\varepsilon}(k)(|k|^2 + |k| \Lambda^{1/2}) \ . \\
\end{equation}
Now, optimizing over $\delta>0$, we find
\begin{equation}
    \begin{split}
        \lambda^2 \left \langle \mathbb{W}^{\text{Ren}}_\varepsilon \right \rangle_\lambda & \gtrsim \frac{\lambda^2}{2} \sum_{k \in 2\pi\mathbb{Z}^2}\widehat{w^\varepsilon}(k)\left \langle {\left \lvert \text{d}\Gamma(e_k^-) - \left \langle \text{d}\Gamma(e_k^-) \right \rangle_0 \right \lvert}^2 \right \rangle_\lambda \\
        & \quad - \varepsilon^{-1}g(\varepsilon,\lambda,\Lambda)^{1/2} - g(\varepsilon,\lambda,\Lambda) \ . \\
    \end{split}
\end{equation}
Let us then define 
\begin{equation}
    \Tilde{g}(\varepsilon, \lambda, \Lambda)= \varepsilon^{-1}g(\varepsilon,\lambda,\Lambda)^{1/2} + g(\varepsilon,\lambda,\Lambda) + \varepsilon^{-2}\lambda \Lambda^{1+}|\log \varepsilon|^2 + \varepsilon^{-2}\lambda^2 \Lambda^{2+} \ . \\
\end{equation}
From \eqref{eq:quantideF}, we thus have
\begin{equation} \label{eq:wickinter1}
    \begin{split}
        \lambda^2 \left \langle \mathbb{W}^{\text{Ren}}_\varepsilon \right \rangle_\lambda & \geq \frac{1}{2}\sum_{k \in 2\pi\mathbb{Z}^2}\widehat{w^\varepsilon}(k)\int_{P\mathfrak{H}} \ps{u}{e_{-k}u} \ps{u}{e_ku} \text{d}\mu_{\lambda, P}(u)  \\
        & \quad \ - \sum_{k \in 2\pi\mathbb{Z}^2}\widehat{w^\varepsilon}(k)\Re \int_{P \mathfrak{H}} \ps{u}{e_{-k}u} \text{d}\mu_{\lambda, P}(u)\int_{P \mathfrak{H}} \ps{u}{e_k u}\text{d}\mu_{0,K}(u)  \\
        & \quad \ + \frac{1}{2} \sum_{k \in 2\pi\mathbb{Z}^2}\widehat{w^\varepsilon}(k)\left \lvert \int_{P \mathfrak{H}} \ps{u}{e_k u}\text{d}\mu_{0,K}(u) \right \lvert^2 \\
        & \quad - C \Tilde{g}(\varepsilon, \lambda, \Lambda) \\
        & = \frac{1}{2} \int_{P\mathfrak{H}} \sum_{k \in 2\pi\mathbb{Z}^2}\widehat{w^\varepsilon}(k) \left \lvert \ps{u}{e_ku} - \left \langle \ps{u}{e_ku} \right \rangle_{\mu_{0,K}} \right \lvert^2 \text{d}\mu_{\lambda, P}(u) \\
        & \quad - C \Tilde{g}(\varepsilon, \lambda, \Lambda) \\
        & = \frac{1}{2} \int_{P\mathfrak{H}} \left( \iint_{\mathbb{T}^2 \times \mathbb{T}^2} w^\varepsilon(x-y) \wick{|u(x)|^2}\wick{|u(y)|^2} \text{d}x\text{d}y\right) \text{d}\mu_{\lambda, P}(u) \\
        & \quad - C \Tilde{g}(\varepsilon, \lambda, \Lambda) \ . \\ 
    \end{split}
\end{equation}
The first term in the regularized renormalized interaction of the $\Phi^4_2$ theory appears in the above. Notice that here everything is well defined, since we are on the finite-dimensional Hilbert space $P\mathfrak{H}$. In order to obtain the second term of the interaction, we need to look at the remaining non constant term in the quantum variational principle, that is $-\lambda \tau^\varepsilon\left \langle \mathcal{N} - N_0 \right \rangle_\lambda$. To control this term, we write
\begin{equation}
    \begin{split}
        \left \langle \mathcal{N} - N_0 \right \rangle_\lambda& = \tr{\mathcal{N}\left( \Gamma_\lambda - \Gamma_0 \right)} \\
        & = \tr{\Gamma_\lambda^{(1)} - \Gamma_0^{(1)}} \\
        & = \tr{P\left(\Gamma_\lambda^{(1)} - \Gamma_0^{(1)}\right)P} + \tr{Q\left(\Gamma_\lambda^{(1)} - \Gamma_0^{(1)}\right)} \\
        & = \tr{\Gamma_{\lambda,P}^{(1)} - \Gamma_{0,P}^{(1)}} + \tr{Q\left(\Gamma_\lambda^{(1)} - \Gamma_0^{(1)}\right)} \ . \\
    \end{split}
\end{equation}
Here we used the fact that $P+Q = \mathbf{1}$, $P^2 =P$ and the cyclicity of the trace. Now, in the spirit of a previously used argument, we control
\begin{equation}
    \begin{split}
        \lambda \tr{Q\left(\Gamma_\lambda^{(1)} - \Gamma_0^{(1)}\right)} & \leq \lambda \left \lVert Qh^{-1} \right \lVert_{\text{HS}} \left \lVert \sqrt{h} \left(\Gamma_\lambda^{(1)} - \Gamma_0^{(1)} \right)\sqrt{h} \right \lVert_{\text{HS}} \\
        & \lesssim|\log \varepsilon|^2 \Lambda^{-1/2+} \ . \\
    \end{split}
\end{equation}
Furthermore, using \eqref{eq:husimi12}, we compute
\begin{equation}
    \lambda \tr{\Gamma_{\lambda,P}^{(1)}} = \int_{P\mathfrak{H}}\|u\|^2 \text{d}\mu_{\lambda, P}(u) - \lambda\tr{P} \ . \\
\end{equation}
Next, using that $\Gamma_0^{(1)} = \left( e^{\lambda h}-1 \right)^{-1}$ and the operator bound $\left(e^{\lambda h}-1\right)^{-1} - (\lambda h)^{-1} \gtrsim -1$, we have
\begin{equation} \label{eq: trPGamma01}
    \begin{split}
        \lambda \tr{\Gamma_{0,P}^{(1)}} & = \tr{Ph^{-1}} + \lambda \tr{P \left( \frac{1}{e^{\lambda h}-1} - \frac{1}{\lambda h} \right)} \\
        & \geq \int \ps{u}{Pu}\text{d}\mu_0(u) - C \lambda \tr{P} \\
        & \geq \int_{P\mathfrak{H}} \|u\|^2\text{d}\mu_{0,K}(u) - C \lambda \Lambda^{1+} \ . \\
    \end{split}
\end{equation}
Hence, using that $\mu_{\lambda, P}$ is a probability measure, we obtain
\begin{equation}
    \lambda \tr{\Gamma_{\lambda,P}^{(1)} - \Gamma_{0,P}^{(1)}} \leq \int_{P\mathfrak{H}}\left(\|u\|^2 - \int_{P\mathfrak{H}} \|u\|^2\text{d}\mu_{0,K}(u)\right)\text{d}\mu_{\lambda, P}(u) - C \lambda \Lambda \ . \\
\end{equation}
Thus, we get the following estimate
\begin{equation}\label{eq:wickinter2}
    \lambda\tau^\varepsilon\left \langle \mathcal{N} - N_0 \right \rangle_\lambda \leq \tau^\varepsilon \int_{P\mathfrak{H}}\left(\int_{\mathbb{T}^2}\wick{|u(x)|^2} \text{d}x\right)\text{d}\mu_{\lambda, P}(u) - C \left( \lambda \Lambda - |\log \varepsilon|^2 \Lambda^{-1/2+} \right) \ . \\
\end{equation}
Now, combining \eqref{eq:wickinter1}, \eqref{eq:wickinter2} and the quantum variational principle, and adding the constant term $-E^\varepsilon$ on both sides of the obtained inequality, we get
\begin{equation} \label{eq:quantumvarpple1}
    - \log \frac{Z(\lambda)}{Z_0(\lambda)} \geq \mathcal{H}(\Gamma_\lambda, \Gamma_0) + \int_{P\mathfrak{H}} W^\varepsilon_K[u] \text{d}\mu_{\lambda, P}(u) - C\rho(\varepsilon,\lambda,\Lambda) \ , \\
\end{equation}
where $W^\varepsilon_K[u]$ is the (well-defined) regularized renormalized interaction, and 
\begin{equation}
    \rho(\varepsilon,\lambda,\Lambda) = \Tilde{g}(\varepsilon,\lambda,\Lambda)- \lambda \Lambda + |\log \varepsilon|^2 \Lambda^{-1/2+} \ . \\
\end{equation}
Furthermore, by Theorem \ref{thm:Relative entropyquantum to classical}, we have
\begin{equation}
    \mathcal{H}(\Gamma_\lambda,\Gamma_0) \geq \mathcal{H}(\Gamma_{\lambda,P},\Gamma_{0,P}) \geq \mathcal{H}_{\text{cl}}(\mu_{\lambda, P},\mu_{P,0}) \ , \\
\end{equation}
which is the final step we needed to recover all of the classical elements. Here we used the shorthand notation $\mu_{P,0} = \mu_{P,\Gamma_0}^{\lambda}$. Recall also that
\begin{equation} \label{eq:classicalre}
    \mathcal{H}_{\text{cl}}(\mu_{\lambda, P},\mu_{P,0}) = \int_{P\mathfrak{H}} \frac{\text{d}\mu_{\lambda, P}}{\text{d}\mu_{P,0}}(u) \log \frac{\text{d}\mu_{\lambda, P}}{\text{d}\mu_{P,0}}(u) \text{d}\mu_{P,0}(u) \ . \\
\end{equation}
Now, recall that the measures we are considering are all absolutely continuous with respect to $\text{d}u$ the Lebesgue measure on $P\mathfrak{H} \simeq \mathbb{C}^K$, since they are defined as the following lower symbol
\begin{equation}
    \text{d}\mu_{P,\Gamma}^\delta(u) = (\delta \pi)^{-K} \ps{\xi\left(\frac{u}{\sqrt{\delta}} \right)}{\Gamma_P \xi\left(\frac{u}{\sqrt{\delta}} \right)}_{\mathfrak{F}(P\mathfrak{H})} \text{d}u
\end{equation}
for all $\delta>0$ and all state $\Gamma$. In particular, the classical relative entropy \eqref{eq:classicalre} reduces to
\begin{equation}
    \mathcal{H}_{\text{cl}}(\mu_{\lambda, P},\mu_{P,0}) = \int_{P\mathfrak{H}} \log \text{d}\mu_{\lambda, P}(u) \text{d}\mu_{\lambda, P}(u) - \int_{P\mathfrak{H}}\log\text{d}\mu_{P,0}(u)\text{d}\mu_{\lambda, P}(u) \ . \\
\end{equation}
The goal now is to replace the symbol $\mu_{P,0}$ of the $P$-localized free Gibbs state  by the cylindrical projection $\mu_{0,K}$ of the free measure. We would indeed like to obtain the term 
$$\int_{P\mathfrak{H}}W^\varepsilon_K[u] \text{d}\mu_{0,K}(u) \ ,$$ 
which is exactly the one we seek for our lower bound. To do so, since $\mu_{0,K} = z_0^{-1}e^{-\ps{u}{hu}}\text{d}u$ is also absolutely continuous with respect to the Lebesgue measure $\text{d}u$ on $P\mathfrak{H}$, remark first that
\begin{equation} \label{eq: relative entropy computation lowerb}
    \begin{split}
        \mathcal{H}_{\text{cl}}(\mu_{\lambda, P},\mu_{0,K}) & = \int_{P\mathfrak{H}} \frac{\text{d}\mu_{\lambda, P}}{\text{d}\mu_{0,K}}(u) \log \frac{\text{d}\mu_{\lambda, P}}{\text{d}\mu_{0,K}}(u) \text{d}\mu_{0,K}(u) \\
        & = \int_{P\mathfrak{H}} \log \text{d}\mu_{\lambda, P}(u) \text{d}\mu_{\lambda, P}(u) - \int_{P\mathfrak{H}}\log\text{d}\mu_{0,K}(u)\text{d}\mu_{\lambda, P}(u) \\
        & = \int_{P\mathfrak{H}} \log \text{d}\mu_{\lambda, P}(u) \text{d}\mu_{\lambda, P}(u) + \log z_0 + \int_{P\mathfrak{H}}\ps{u}{hu}\text{d}\mu_{\lambda, P}(u) \ . \\
    \end{split}
\end{equation}
Hence we find
\begin{equation} \label{eq:relativeentcomp}
    \mathcal{H}_{\text{cl}}(\mu_{\lambda, P},\mu_{P,0}) = \mathcal{H}_{\text{cl}}(\mu_{\lambda, P},\mu_{0,K}) - \int_{P\mathfrak{H}}\log\text{d}\mu_{P,0}(u)\text{d}\mu_{\lambda, P}(u) - \log z_0 - \int_{P\mathfrak{H}}\ps{u}{hu}\text{d}\mu_{\lambda, P}(u) \ . \\
\end{equation}
Next, we compute the lower symbol of the free Gibbs state in Lemma~\ref{lemma:Lower symbol for the Free gibbs state} below, finding
\begin{equation}
    \text{d}\mu_{P,0}(u) = \frac{1}{\Tilde{z}_0}e^{-\ps{u}{\Tilde{h}u}}\text{d}u \ , \\
\end{equation}
with
\begin{equation*}
    \begin{split}
        \Tilde{z}_0 &= (\lambda \pi)^{K}\prod_{j=1}^K\left(1-e^{-\lambda \lambda_j} \right)^{-1} \\
        \Tilde{h} &:= \lambda^{-1}P\left(1-e^{-\lambda h} \right)P \ . \\
    \end{split}
\end{equation*}
Coming back to \eqref{eq:relativeentcomp}, we write
\begin{equation} \label{eq: relative ent muP0 mu0K}
    \mathcal{H}_{\text{cl}}(\mu_{\lambda, P},\mu_{P,0}) = \mathcal{H}_{\text{cl}}(\mu_{\lambda, P},\mu_{0,K})  + \log \frac{\Tilde{z}_0}{z_0} + \int_{P\mathfrak{H}}\ps{u}{\left(\Tilde{h}-h\right)u}\text{d}\mu_{\lambda, P}(u) \ . \\
\end{equation}
Hence, the variational principle \eqref{eq:quantumvarpple1} gives us
\begin{equation} \label{eq:twotermsleftqvpple}
    \begin{split}
        - \log \frac{Z(\lambda)}{Z_0(\lambda)} & \geq \mathcal{H}_{\text{cl}}(\mu_{\lambda, P},\mu_{0,K}) + \int_{P\mathfrak{H}}W^\varepsilon_K[u] \text{d}\mu_{\lambda, P}(u) \\
        & \quad  + \log \frac{\Tilde{z}_0}{z_0} + \int_{P\mathfrak{H}}\ps{u}{\left(\Tilde{h}-h\right)u}\text{d}\mu_{\lambda, P}(u) \\
        & \quad - C\rho(\varepsilon,\lambda,\Lambda) \\
        & \geq - \log \left( \int_{P\mathfrak{H}} e^{-W^\varepsilon_K[u]} \text{d}\mu_{0,K}(u)\right) \\
        & \quad + \log \frac{\Tilde{z}_0}{z_0} + \int_{P\mathfrak{H}}\ps{u}{\left(\Tilde{h}-h\right)u}\text{d}\mu_{\lambda, P}(u) \\
        & \quad - C\rho(\varepsilon,\lambda,\Lambda) \ . \\
    \end{split}
\end{equation}
In the second inequality we used the classical variational principle \eqref{eqclassical variational problem 1}. Let us now treat the two remaining terms in \eqref{eq:twotermsleftqvpple}. First, we have
\begin{equation}
    \begin{split}
        \frac{\Tilde{z}_0}{z_0} & \ \ = \prod_{j=1}^K \frac{\lambda \lambda_j}{1 - e^{-\lambda \lambda_j}} \\
        & \underset{\lambda \rightarrow 0^+}{\simeq} \prod_{j=1}^K \frac{1}{1 - \lambda \lambda_j} \ , \\
    \end{split}
\end{equation}
so that
\begin{equation}
   \begin{split}
        \log \frac{\Tilde{z}_0}{z_0}  &\underset{\lambda \rightarrow 0^+}{\simeq} - \sum_{j=1}^K \log(1-\lambda \lambda_j) \\
        &\underset{\lambda \rightarrow 0^+}{\simeq} \lambda \sum_{j=1}^K\lambda_j \\
        & \ \ = \lambda \tr{Ph} \\
        &\underset{\lambda \rightarrow 0^+}{\simeq} \lambda \Lambda^2 \ . \\
   \end{split}
\end{equation}
Furthermore, we have
\begin{equation} \label{eq:1-e^-lambda h asymptotics}
    1-e^{-\lambda h} \underset{\lambda \rightarrow 0^+}{=} \lambda h - \lambda^2 h^2 + o(\lambda^2) \ . \\
\end{equation}
Indeed, write 
\begin{equation}
    1-e^{-\lambda h} = \lambda h - \lambda^2 h^2 + \sum_{n=3}^\infty (-1)^n \frac{(\lambda h)^n}{n!} \ . \\
\end{equation}
We certainly have 
$$\|Ph\|_{\text{op}} = \sup_{u \in P\mathfrak{H}, \|u\| = 1} \|h u\| \leq \Lambda \ , $$
and thus
\begin{equation}
    \begin{split}
        \left \lVert P \sum_{n=3}^\infty (-1)^n \frac{(\lambda h)^n}{n!}\right \lVert_{\text{op}}  & \leq \lambda^3 \Lambda^3 \sum_{n=3}^\infty \frac{(\lambda \Lambda)^{n-3}}{n!} \\
        & \leq \frac{\lambda^3 \Lambda^3}{6} \sum_{n=3}^\infty \frac{(\lambda \Lambda)^{n-3}}{(n-3)!} \\
        & \lesssim \lambda^3 \Lambda^3 e^{\lambda \Lambda} \ . \\
    \end{split}
\end{equation}
Recalling that $\Lambda = \lambda^{-\nu}$ for some $\nu >0$, we get
\begin{equation} \label{eq:first cond on nu}
    \begin{split}
        \frac{\lambda^3 \Lambda^3 e^{\lambda \Lambda}}{\lambda^2} & = \lambda^{1-3\nu} e^{\lambda \Lambda} \underset{\lambda \rightarrow 0^+}{\longrightarrow} 0 \quad \textnormal{iff} \ \nu < \frac{1}{3} \ . \\
    \end{split}
\end{equation}
Hence, \eqref{eq:1-e^-lambda h asymptotics} yields
\begin{equation}
    \begin{split}
        \int_{P\mathfrak{H}}\ps{u}{\left(\Tilde{h}-h\right)u}\text{d}\mu_{\lambda, P}(u) & \underset{\lambda \rightarrow 0^+}{\simeq} - \lambda \int_{P\mathfrak{H}}\ps{u}{h^2u}\text{d}\mu_{\lambda, P}(u) \\
        & \underset{\lambda \rightarrow 0^+}{\gtrsim} -\lambda \Lambda^2 \int_{P\mathfrak{H}} \|u\|^2\text{d}\mu_{\lambda, P}(u) \ . \\
    \end{split}
\end{equation}
Now, recall that
\begin{equation}
    \begin{split}
        \int_{P\mathfrak{H}} \|u\|^2\text{d}\mu_{\lambda, P}(u) & = \lambda \tr{\Gamma_{\lambda,P}^{(1)}} + \lambda \tr{P} \\
        & \lesssim - \log \lambda + \lambda \Lambda^{1+} \ . \\
    \end{split}
\end{equation}
Here we used from previous computations that $\tr{\Gamma_{\lambda,P}^{(1)}} \lesssim -\lambda^{-1} \log \lambda$ and $\tr{P} \lesssim \Lambda^{1+}$. Finally, plugging these estimates back into \eqref{eq:twotermsleftqvpple}, we arrive at
\begin{equation}
    \begin{split}
        - \log \frac{Z(\lambda)}{Z_0(\lambda)} 
        & \geq - \log \left( \int_{P\mathfrak{H}} e^{-W^\varepsilon_K[u]} \text{d}\mu_{0,K}(u)\right) - C \Tilde{\rho}(\varepsilon, \lambda, \Lambda) \ , \\
    \end{split}
\end{equation}
where $\Tilde{\rho}(\varepsilon, \lambda, \Lambda)= \rho(\varepsilon,\lambda,\Lambda) + \lambda^2 \Lambda^{3+} - \lambda \Lambda^2 - \lambda \log \lambda \Lambda^2$. \\

Let us now look at the full expression for $\Tilde{\rho}(\varepsilon, \lambda, \Lambda)$, and deduce the right conditions on $\eta$ and $\nu$ for our lower bound to hold in the limit. Gathering all the previous expressions together, we have
\begin{equation}
    \begin{split}
        \Tilde{\rho}(\varepsilon, \lambda, \Lambda) & = \varepsilon^{-1}\left( \varepsilon^{-2}|\log \varepsilon|^2 \Lambda^{-1/2+} + \varepsilon^{-4} \lambda^2 |\log \lambda|^2 + \lambda^2\log \lambda\sum_{k \in 2\pi\mathbb{Z}^2}\widehat{w^\varepsilon}(k)(|k|^2 + |k| \Lambda^{1/2}) \right)^{1/2} \\
        & \quad + \varepsilon^{-2}|\log \varepsilon|^2 \Lambda^{-1/2+} + \varepsilon^{-4} \lambda^2 |\log \lambda|^2 + \lambda^2\log \lambda\sum_{k \in 2\pi\mathbb{Z}^2}\widehat{w^\varepsilon}(k)(|k|^2 + |k| \Lambda^{1/2}) \\
        & \quad + \varepsilon^{-2}\lambda \Lambda^{1+}|\log \varepsilon|^2 + \varepsilon^{-2}\lambda^2 \Lambda^{2+} + \varepsilon^{-2}\lambda^2 \Lambda^{1+} |\log \varepsilon|^2 - \lambda \Lambda + |\log \varepsilon|^2 \Lambda^{-1/2+} \\
        & \quad + \lambda^2 \Lambda^{3+} - \lambda \Lambda^2 - \lambda \log \lambda \Lambda^2 \ . \\
    \end{split}
\end{equation}
Recall that we seek $\varepsilon$ and $\Lambda$ of the form $\varepsilon = \lambda^{\eta}$, $\eta > 0$ and $\Lambda = \lambda^{-\nu}$, $\nu > 0$.  With elementary manipulations,
\begin{equation} \label{eq:rho simplified 1}
    \begin{split}
        \Tilde{\rho}(\varepsilon, \lambda, \Lambda) & \leq  \varepsilon^{-2}|\log \varepsilon| \Lambda^{-1/4+} + \varepsilon^{-3} \lambda |\log \lambda| + \varepsilon^{-1} \lambda|\log \lambda|^{1/2} \left( \sum_{k \in 2\pi\mathbb{Z}^2}\widehat{w^\varepsilon}(k)(|k|^2 + |k| \Lambda^{1/2}) \right)^{1/2} \\
        & \quad + \varepsilon^{-2}|\log \varepsilon|^2 \Lambda^{-1/2+} + \varepsilon^{-4} \lambda^2 |\log \lambda|^2 + \lambda^2\log \lambda\sum_{k \in 2\pi\mathbb{Z}^2}\widehat{w^\varepsilon}(k)(|k|^2 + |k| \Lambda^{1/2}) \\
        & \quad + \varepsilon^{-2}\lambda \Lambda^{1+}|\log \varepsilon|^2 + \varepsilon^{-2}\lambda^2 \Lambda^{2+} + \varepsilon^{-2}\lambda^2 \Lambda^{1+} |\log \varepsilon|^2 - \lambda \Lambda + |\log \varepsilon|^2 \Lambda^{-1/2+} \\
        & \quad + \lambda^2 \Lambda^{3+} - \lambda \Lambda^2 - \lambda \log \lambda \Lambda^2 \\
        & \lesssim \varepsilon^{-2}|\log \varepsilon| \Lambda^{-1/4+} + \varepsilon^{-3} \lambda |\log \lambda| + \varepsilon^{-1} \lambda|\log \lambda|^{1/2} \left( \sum_{k \in 2\pi\mathbb{Z}^2}\widehat{w^\varepsilon}(k)(|k|^2 + |k| \Lambda^{1/2}) \right)^{1/2} \\
        & \quad + \lambda^2\log \lambda\sum_{k \in 2\pi\mathbb{Z}^2}\widehat{w^\varepsilon}(k)(|k|^2 + |k| \Lambda^{1/2}) + \varepsilon^{-2}\lambda \Lambda^{1+}|\log \varepsilon|^2 + \varepsilon^{-2}\lambda^2 \Lambda^{2+}  + \lambda^2 \Lambda^{3+} + \lambda |\log \lambda| \Lambda^2 \ , \\
    \end{split}
\end{equation}
where we assumed $\eta <1$ , so that e.g. $\varepsilon^{-4} \lambda^2 |\log \lambda|^2 \ll \varepsilon^{-3} \lambda |\log \lambda|$. Moreover, by our assumption on $w$, we have
\begin{equation}
\begin{split}
    \sum_{k \in 2\pi\mathbb{Z}^2}\widehat{w^\varepsilon}(k)|k|^2 & \lesssim \varepsilon^{-4} \\
\sum_{k \in 2\pi\mathbb{Z}^2}\widehat{w^\varepsilon}(k)|k| & \lesssim \varepsilon^{-3} \ . \\
\end{split}
\end{equation}
Thus, \eqref{eq:rho simplified 1} reduces to
\begin{equation}
        \begin{split}
        \Tilde{\rho}(\varepsilon, \lambda, \Lambda)
        & \lesssim \varepsilon^{-2}|\log \varepsilon| \Lambda^{-1/4+} + \varepsilon^{-3} \lambda |\log \lambda| + \varepsilon^{-3} \lambda|\log \lambda|^{1/2} + \varepsilon^{-5/2} \lambda \Lambda^{1/4}|\log \lambda|^{1/2}  \\
        & \quad + \varepsilon^{-4} \lambda^2\log \lambda + \varepsilon^{-3} \lambda^2\Lambda^{1/2}\log \lambda +  \varepsilon^{-2}\lambda \Lambda^{1+}|\log \varepsilon|^2 + \varepsilon^{-2}\lambda^2 \Lambda^{2+}  + \lambda^2 \Lambda^{3+} + \lambda |\log \lambda| \Lambda^2 \ . \\
    \end{split}
\end{equation}
Looking at all these terms separately as powers of $\lambda$ times powers of $\log \lambda$, and combining this with \eqref{eq:first cond on nu}, we deduce that $\nu$ and $\eta$ must satisfy the following inequalities in order to have $\Tilde{\rho}(\varepsilon, \lambda, \Lambda) = o(1)$
\begin{equation}
    \begin{cases}
        \eta \in \left(0,\frac{1}{3}\right) , \  \nu \in \left(0, \frac{1}{3} \right) \\
        \nu > 8 \eta \\
        \nu < 1 - \eta
    \end{cases} \ . \\
\end{equation}
The last condition being redundant since $\nu < 1/3$, we must have 
\begin{equation}
    8 \eta < \nu < \frac{1}{3} \ . \\
\end{equation}
Hence if $\eta < 1/24$ we may always find an appropriate $\nu$ to meet the above conditions.
\end{proof}

\subsection{Free energy upper bound}

We turn to the the relative free energy upper bound. We will use several ingredients presented in \cite{LewNamRou-20}, and~\cite{DinRou-24}. In \cite{LewNamRou-20} and \cite{NamZhuZhu-25}, the classical measure appears thanks to a trial state argument that allows to reduce the problem to a finite dimensional estimate, and the use of the Peierls-Bogoliubov inequality once this is done. In \cite{DinRou-24}, the main difference is that once the problem is projected in finite dimension, another trial state argument is used, which allows to choose a suitable upper symbol that immediately inserts the desired limiting measure in the estimate.

\begin{proposition}[\textbf{Free energy upper bound}] \label{propFree energy upper bound}\mbox{}\\
    Let $\lambda > 0$,  $\eta \in (0,1/18)$, and $\nu \in (8 \eta, 1/2 - \eta)$. Let $\varepsilon = \lambda^\eta$, $\Lambda = \lambda^{-\nu}$. Let $P = \mathbf{1}_{h \leq \Lambda} = \sum_{k=1}^K \ket{e_k}\bra{e_k}$ where $K = \tr{P}$. Consider the renormalized regularized interaction $W^\varepsilon_K$ as in \eqref{eq: regularized renormalized Int}, and $\mu_{0,K}$ the cylindrical projection of the Gaussian measure $\mu_0$ on $P\mathfrak{H}$.
    Then, we have
    \begin{equation} \label{equpper bound in prop}
        -\log \frac{Z(\lambda)}{Z_0(\lambda)} \leq - \log \left( \int_{P \mathfrak{H}}e^{-W^\varepsilon_K[u]} \textnormal{d}\mu_{0,K}(u) \right) + o_{\lambda \rightarrow0^+}(1) \ . \\
    \end{equation}
\end{proposition}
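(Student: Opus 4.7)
The plan is to apply the variational principle~\eqref{eqvariational problem 2} with an explicit trial state constructed on the factorization $\mathfrak{F}(\mathfrak{H}) \simeq \mathfrak{F}(P\mathfrak{H}) \otimes \mathfrak{F}(Q\mathfrak{H})$. In the spirit of~\cite{DinRou-24}, I would take
\begin{equation*}
\Gamma^{\textnormal{tr}} = G_P \otimes \Gamma_{0,Q}, \qquad G_P = \int_{P\mathfrak{H}} \ket{\xi(u/\sqrt{\lambda})}\bra{\xi(u/\sqrt{\lambda})}\,\textnormal{d}\mu^\varepsilon_K(u),
\end{equation*}
where $\textnormal{d}\mu^\varepsilon_K(u) = (\mathcal{Z}^\varepsilon_K)^{-1} e^{-W^\varepsilon_K[u]} \textnormal{d}\mu_{0,K}(u)$ is the finite-dimensional classical Gibbs measure on $P\mathfrak{H}$ and $\Gamma_{0,Q}$ is the $Q$-localization of the free Gibbs state. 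Because $h$ preserves $P\mathfrak{H}$ and $Q\mathfrak{H}$, the free state factorizes as $\Gamma_0 = \Gamma_{0,P} \otimes \Gamma_{0,Q}$, so that $\mathcal{H}(\Gamma^{\textnormal{tr}}, \Gamma_0) = \mathcal{H}(G_P, \Gamma_{0,P})$.

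\textbf{Entropy side.} I would then estimate the quantum relative entropy on $\mathfrak{F}(P\mathfrak{H})$ by its classical counterpart via a Berezin--Lieb dual of Theorem~\ref{thm:Relative entropyquantum to classical}, giving $\mathcal{H}(G_P, \Gamma_{0,P}) \leq \mathcal{H}_{\textnormal{cl}}(\mu^\varepsilon_K, \mu_{P,0})$, and then convert $\mu_{P,0}$ into the Gaussian cylindrical projection $\mu_{0,K}$ using Lemma~\ref{lemma:Lower symbol for the Free gibbs state} exactly as in the passage~\eqref{eq: relative ent muP0 mu0K} employed for the lower bound. The two correction terms $\log(\tilde z_0/z_0)$ and $\int \ps{u}{(\tilde h - h)u}\,\textnormal{d}\mu^\varepsilon_K(u)$ would then be analyzed by the same expansions as in the lower bound, yielding $o(1)$ errors under the constraint $\nu < 1/3$.

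\textbf{Energy side and main obstacle.} For the interaction energy, I would decompose $\mathbb{W}^{\textnormal{Ren}}_\varepsilon$ along Fourier modes as in~\eqref{eqinteractin term}, and apply Theorem~\ref{thmLower symbols as de Finetti measures} to rewrite $\lambda^k (G_P)^{(k)}$ in terms of the classical $k$-point moments of $\mu^\varepsilon_K$ plus the explicit de Finetti remainder bounded in trace-norm by $\lambda^k \sum_{\ell} \binom{k}{\ell}^2 \frac{(k-\ell+K-1)!}{(K-1)!} \tr{\mathcal{N}^\ell G_P}$. The leading contribution reconstructs exactly $\int_{P\mathfrak{H}} W^\varepsilon_K[u]\,\textnormal{d}\mu^\varepsilon_K(u)$ after matching the $\tau^\varepsilon$ and $E^\varepsilon$ counter-terms in~\eqref{eqhamiltonian with fourier}. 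The main obstacle is the quantitative control of the de Finetti remainders: the factor $K \sim \Lambda$ brings $\lambda^{-\nu}$, each Fourier factor in the quartic brings $\varepsilon^{-2}$ (so $\varepsilon^{-4}$ for the square), and the moments $\tr{\mathcal{N}^\ell G_P}$ have to be bounded uniformly in $\lambda, \varepsilon$ by transporting the a-priori bound of Lemma~\ref{lem:apriori} to $G_P$ through its lower symbol. Balancing these powers is precisely what forces the stricter conditions $\nu < 1/2 - \eta$ and $\eta < 1/18$, instead of the $\eta < 1/24$ of the lower bound.

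\textbf{Conclusion.} Combining the entropy and energy estimates in the variational principle~\eqref{eqvariational problem 2} gives
\begin{equation*}
-\log \frac{Z(\lambda)}{Z_0(\lambda)} \leq \mathcal{H}_{\textnormal{cl}}(\mu^\varepsilon_K, \mu_{0,K}) + \int_{P\mathfrak{H}} W^\varepsilon_K[u]\,\textnormal{d}\mu^\varepsilon_K(u) + o_{\lambda\to 0^+}(1).
\end{equation*}
Since $\mu^\varepsilon_K$ is the unique minimizer of the finite-dimensional classical variational problem~\eqref{eqclassical variational problem 1} associated with $(W^\varepsilon_K, \mu_{0,K})$, the right-hand side equals $-\log \int_{P\mathfrak{H}} e^{-W^\varepsilon_K[u]}\,\textnormal{d}\mu_{0,K}(u) + o(1)$, yielding~\eqref{equpper bound in prop}.
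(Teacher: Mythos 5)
Your overall toolkit — localization on $\mathfrak{F}(P\mathfrak{H}) \otimes \mathfrak{F}(Q\mathfrak{H})$, a trial state that is free on the $Q$ sector, Berezin--Lieb inequalities, Theorem~\ref{thmLower symbols as de Finetti measures}, and the classical variational principle — is the right one and matches the paper's. But you collapse the paper's two-step structure into one, and in doing so you leave the hardest part of the argument unaddressed.

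The paper splits the upper bound into Lemma~\ref{lem:up fin dim} and Lemma~\ref{lem:finite dim semi}. The first lemma uses the trial state $\mathcal{U}^*\bigl(\tilde\Gamma_{\lambda,P} \otimes \Gamma_{0,Q}\bigr)\mathcal{U}$, where $\tilde\Gamma_{\lambda,P}$ is the \emph{projected interacting Gibbs state}, precisely so that the full trial state is again a Gibbs state (equation \eqref{eq: trial state as a gibbs state}). This is what lets the correlation estimate of Theorem~\ref{thm:corr} — itself relying on the Stahl/Deuchert--Nam--Napi\'orkowski correlation inequality, which is a statement about Gibbs states — control the high-momentum variances $\bigl\langle|\dgamma{e_k^+}-\langle\dgamma{e_k^+}\rangle_0|^2\bigr\rangle$. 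Only \emph{after} this reduction, in Lemma~\ref{lem:finite dim semi}, does the coherent-state trial state $\Gamma_\nu$ appear, and at that point it is tested against the \emph{projected} interaction $\tilde{\mathbb{W}}_{\varepsilon,P}$, which involves only $e_k^-=Pe_kP$. Your proposal instead plugs $G_P\otimes\Gamma_{0,Q}$ directly into the full variational principle~\eqref{eqvariational problem 2}, so the un-projected $\mathbb{W}^{\text{Ren}}_\varepsilon$ is tested against it, and you must then control $\sum_k\widehat{w^\varepsilon}(k)\,\text{Tr}\bigl[|\dgamma{e_k^+}-\langle\dgamma{e_k^+}\rangle_0|^2\,(G_P\otimes\Gamma_{0,Q})\bigr]$. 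Your state $G_P\otimes\Gamma_{0,Q}$ is \emph{not} a Gibbs state — $G_P$ is an integral of coherent states against $\mu^\varepsilon_K$ — so Theorem~\ref{thm:corr} does not apply to it, and you say nothing about how those high-momentum cross terms (coming from $Qe_kP$, $Pe_kQ$, $Qe_kQ$) are to be bounded. That is the genuine gap: the one-step route has to replace Theorem~\ref{thm:corr} by a direct, explicit computation of the $e_k^+$ variance in the product state (exploiting the fact that both factors are explicit, with $\Gamma_{0,Q}$ Gaussian and $G_P$ controlled by the moments of $\mu^\varepsilon_K$), and this computation, together with the verification that the resulting errors still close under $\nu < 1/2-\eta$ and $\eta < 1/18$, is the substantive content that your sketch omits.

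Two smaller remarks. First, "transporting the a-priori bound of Lemma~\ref{lem:apriori} to $G_P$ through its lower symbol" is not quite the right mechanism: Lemma~\ref{lem:apriori} concerns the actual Gibbs state $\Gamma_\lambda$, not an arbitrary trial state; the moments $\tr{\mathcal{N}^\ell G_P}$ are obtained directly from $\int\|u\|^{2\ell}\textnormal{d}\mu^\varepsilon_K$ (controlled uniformly via \cite[Proposition~4.3]{FroKnoSchSoh-22}), not from Lemma~\ref{lem:apriori}. Second, your appeal to a "Berezin--Lieb dual" of Theorem~\ref{thm:Relative entropyquantum to classical} is not what the paper does; the paper controls $-\log Z_P$ and $-\log Z_{0,P}$ \emph{separately} with the second and first Berezin--Lieb inequalities respectively, and converts $\mu_{P,0}$ to $\mu_{0,K}$ via Lemma~\ref{lemma:Lower symbol for the Free gibbs state}. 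This does reconstruct the classical relative entropy $\mathcal{H}_{\text{cl}}(\mu^\varepsilon_K,\mu_{0,K})$ in the end, so your intuition is correct, but the mechanism is not a single "dual" inequality.
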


In order to prove the above proposition, we will construct a suitable \textit{projected} trial state, which will allow us to work in finite dimension. We are free to take any convenient $\Tilde{\Gamma} \in \mathcal{S}(\mathfrak{F}(\mathfrak{H}))$ in the variational principle \eqref{eqvariational problem 2} to obtain the upper bound
\begin{equation} \label{eq:vpple upper bound}
    \begin{split}
        - \log \frac{Z(\lambda)}{Z_0(\lambda)} & \leq  \mathcal{H}(\Tilde{\Gamma}, \Gamma_0) + \lambda^2 \text{Tr}_{\mathfrak{F}(\mathfrak{H})} \left[ \mathbb{W}^\text{Ren}_\varepsilon \Tilde{\Gamma}\right] - \lambda \tau^\varepsilon \text{Tr}_{\mathfrak{F}(\mathfrak{H})} \left[ \left( \mathcal{N} - N_0\right) \Tilde{\Gamma}\right] - E^\varepsilon \\
        &= \mathcal{H}(\Tilde{\Gamma}, \Gamma_0) + \lambda^2 \text{Tr}_{\mathfrak{F}(\mathfrak{H})} \left[ \Tilde{\mathbb{W}}_\varepsilon \Tilde{\Gamma}\right] \ , \\
    \end{split}
\end{equation}
where we defined
\begin{equation}
    \Tilde{\mathbb{W}}_\varepsilon = \mathbb{W}^\text{Ren}_\varepsilon - \lambda^{-1}\tau^\varepsilon (\mathcal{N} - N_0) - \lambda^{-2}E^\varepsilon \ . \\
\end{equation}
In order to define our initial trial state, let us first consider the following interacting Gibbs state in $\mathfrak{F}(P \mathfrak{H)}$
\begin{equation}
    \Tilde{\Gamma}_{\lambda,P} = \frac{e^{-  \lambda \mathbb{H}_{\lambda,P}^\varepsilon}}{\textnormal{Tr}_{\mathfrak{F}(P \mathfrak{H)}} \left[e^{-  \lambda \mathbb{H}_{\lambda,P}^\varepsilon}\right]} \ , \\
\end{equation}
where $\mathbb{H}_{\lambda,P}^\varepsilon$ is the projected interacting Hamiltonian
\begin{equation}
    \begin{split}
        \mathbb{H}_{\lambda,P}^\varepsilon &= \mathbb{H}_{0,P} + \lambda \Tilde{\mathbb{W}}_{\varepsilon,P} \\
        \mathbb{H}_{0,P} &= \dgamma{Ph} \ , \\
    \end{split}
\end{equation}
$\Tilde{\mathbb{W}}_{\varepsilon,P}$ is the $P$-localized interaction
\begin{equation}
    \Tilde{\mathbb{W}}_{\varepsilon,P} = \frac{1}{2} \sum_{k \in \left( 2 \pi \mathbb{Z} \right)^2} \widehat{w^\varepsilon}(k) \left \lvert \dgamma{e_k^-} - \left \langle \dgamma{e_k^-} \right \rangle_0 \right \lvert^2 - \lambda^{-1}\tau^\varepsilon \left( \dgamma{P} - \left \langle \dgamma{P} \right \rangle_0 \right) - \lambda^{-2}E^\varepsilon \ , \\
\end{equation}
and we recall that $e_k^- = Pe_kP$. Accordingly, define the associated partition functions
\begin{equation}
    \begin{split}
        Z_P(\lambda) &=  \textnormal{Tr}_{\mathfrak{F}(P \mathfrak{H)}} \left[e^{-  \lambda \mathbb{H}_{\lambda,P}^\varepsilon}\right] \\
         Z_{0,P}(\lambda) & = \textnormal{Tr}_{\mathfrak{F}(P \mathfrak{H)}} \left[e^{-  \lambda \mathbb{H}_{0,P}}\right] \ . \\
    \end{split}
\end{equation}
Then, the following lemma links the initial problem to its projected counterpart :

\begin{lemma}[\textbf{Reduction to a finite dimensional estimate}]\label{lem:up fin dim}\mbox{}\\
    Let $\lambda>0$, $\varepsilon = \lambda^{-\eta}$ and $\Lambda = \lambda^\nu$ for some $\eta, \nu >0$. We have 
    \begin{equation} \label{eq: Reduction to a finite dimensional estimate}
        - \log \frac{Z(\lambda)}{Z_0(\lambda)} \leq- \log \frac{Z_P(\lambda)}{Z_{0,P}(\lambda)} + C\Tilde{r}(\varepsilon, \lambda, \Lambda) \ , \\
    \end{equation}
    where 
    \begin{equation} \label{eq:tilde g in the reduction to finite dim}
        \Tilde{r}(\varepsilon, \lambda, \Lambda) = \varepsilon^{-2}\Lambda^{-1/4+}|\log \varepsilon| + \varepsilon^{-3}\lambda \Lambda^{1/4}|\log \lambda|^{1/2} + \varepsilon^{-4}\lambda^2 \Lambda^{1/2}|\log \lambda| \ , \\
    \end{equation}
    and the constant $C >0$ depends only on $\tr{h^{-1-}} < \infty$, $w$ and the parameters $\eta, \nu$.
\end{lemma}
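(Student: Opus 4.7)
The plan is to apply the variational principle \eqref{eq:vpple upper bound} with the product trial state
$$\tilde{\Gamma} := \mathcal{U}^*\bigl(\tilde{\Gamma}_{\lambda,P}\otimes \Gamma_{0,Q}\bigr)\mathcal{U},$$
where $\mathcal{U}$ is the Fock factorization unitary \eqref{eq:unitary fock spaces} and $\Gamma_{0,Q}$ denotes the free Gibbs state on $\mathfrak{F}(Q\mathfrak{H})$. Since $P$ and $Q$ are spectral projectors of $h$, the operator $\dgamma{h}$ splits additively across the factorization, so $\Gamma_0 = \mathcal{U}^*(\Gamma_{0,P}\otimes\Gamma_{0,Q})\mathcal{U}$. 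The logarithm of a product then gives the exact identity $\mathcal{H}(\tilde{\Gamma},\Gamma_0)=\mathcal{H}(\tilde{\Gamma}_{\lambda,P},\Gamma_{0,P})$, so the entropic contribution is already of the desired projected form.

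Next I split $e_k=e_k^-+e_k^+$ and $\mathcal{N}-N_0=(\dgamma{P}-\langle\dgamma{P}\rangle_0)+(\dgamma{Q}-\langle\dgamma{Q}\rangle_0)$ to decompose
$$\tilde{\mathbb{W}}_\varepsilon = \mathcal{U}^*\bigl(\tilde{\mathbb{W}}_{\varepsilon,P}\otimes\mathbf{1}\bigr)\mathcal{U} + \mathcal{R},$$
with remainder
$$\mathcal{R}= \frac{1}{2}\sum_{k}\widehat{w^\varepsilon}(k)\Bigl(|A_k^+|^2+(A_k^-)^* A_k^+ + (A_k^+)^* A_k^-\Bigr)-\lambda^{-1}\tau^\varepsilon\bigl(\dgamma{Q}-\langle\dgamma{Q}\rangle_0\bigr),$$
where $A_k^\pm:=\dgamma{e_k^\pm}-\langle\dgamma{e_k^\pm}\rangle_0$. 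Plugging into \eqref{eq:vpple upper bound} and using that $\tilde{\Gamma}_{\lambda,P}$ realizes the infimum in the projected variational principle on $\mathfrak{F}(P\mathfrak{H})$, the main part combined with the entropy assembles exactly into $-\log(Z_P(\lambda)/Z_{0,P}(\lambda))$. The lemma is then reduced to bounding $\lambda^2\tr{\mathcal{R}\,\tilde{\Gamma}}$.

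To control this remainder I would first argue that the linear and cross contributions are negligible. The term $\dgamma{Q}-\langle\dgamma{Q}\rangle_0$ acts only on the $Q$-factor, where $\tilde{\Gamma}$ equals $\Gamma_{0,Q}$, and has mean zero against it. For the cross terms $(A_k^-)^* A_k^++(A_k^+)^* A_k^-$, I decompose $e_k^+=Pe_kQ+Qe_kP+Qe_kQ$: the two mixed pieces carry exactly one $Q$-creator or $Q$-annihilator and vanish in the Gaussian state $\Gamma_{0,Q}$, while $\dgamma{Qe_kQ}$ has zero trace against $\Gamma_{0,Q}^{(1)}$ for $k\neq 0$ by the same translation-invariance argument as in Lemma~\ref{lem:apriori}. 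Consequently only $\lambda^2\sum_k\widehat{w^\varepsilon}(k)\langle|A_k^+|^2\rangle_{\tilde{\Gamma}}$ survives. I would then expand it via a Wick-type computation in the Gaussian $Q$-factor (mirroring Lemma~\ref{lem:apriori}), obtaining combinations of $\tr{|e_k^+|^2\Gamma_{0,Q}^{(1)}}$, $\tr{(e_k^+)^*\Gamma_{0,Q}^{(1)}e_k^+\Gamma_{0,Q}^{(1)}}$, and cross traces coupling $\tilde{\Gamma}_{\lambda,P}^{(1)}$ to $\Gamma_{0,Q}^{(1)}$; the high-momentum gain $\|Qh^{-1}\|_{\textnormal{HS}}\lesssim\Lambda^{-1/2+}$ (already used in Theorem~\ref{thm:corr}), the Green-kernel bound $G(r)\simeq|\log r|$, the sum rule $\sum_k\widehat{w^\varepsilon}(k)\lesssim\varepsilon^{-2}$, and the a priori estimates of Lemma~\ref{lem:apriori} combine to reproduce the three terms in $\tilde{r}(\varepsilon,\lambda,\Lambda)$.

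The main obstacle is precisely that $\tilde{\Gamma}_{\lambda,P}$ is not Gaussian, so Wick's theorem does not directly dispose of $\langle|A_k^+|^2\rangle_{\tilde{\Gamma}}$; one is left with coupling expressions of the form $\tr{|Pe_kQ|^2\,\tilde{\Gamma}_{\lambda,P}^{(1)}\otimes\Gamma_{0,Q}^{(1)}}$ that must be handled by a Cauchy--Schwarz split analogous to \eqref{eq:icsvarterms}, trading the accumulated mass $\sum_k\widehat{w^\varepsilon}(k)\simeq\varepsilon^{-2}$ against the $\Lambda^{-1/2+}$ gain on high momenta, while invoking the moment estimates $\tr{\mathcal{N}\tilde{\Gamma}_{\lambda,P}}$ and $\lambda\|\sqrt{h}\tilde{\Gamma}_{\lambda,P}^{(1)}\sqrt{h}\|_{\textnormal{HS}}\lesssim|\log\varepsilon|^2$ inherited from Lemma~\ref{lem:apriori}. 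A careful balancing—optimising the Cauchy--Schwarz parameter exactly as in the lower-bound analysis—produces the terms $\varepsilon^{-2}\Lambda^{-1/4+}|\log\varepsilon|$, $\varepsilon^{-3}\lambda\Lambda^{1/4}|\log\lambda|^{1/2}$ and $\varepsilon^{-4}\lambda^2\Lambda^{1/2}|\log\lambda|$ announced in \eqref{eq:tilde g in the reduction to finite dim}.
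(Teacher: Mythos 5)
Your general strategy—take the factorized trial state $\tilde{\Gamma}=\mathcal{U}^*(\tilde{\Gamma}_{\lambda,P}\otimes\Gamma_{0,Q})\mathcal{U}$, use the exact relative-entropy identity $\mathcal{H}(\tilde{\Gamma},\Gamma_0)=\mathcal{H}(\tilde{\Gamma}_{\lambda,P},\Gamma_{0,P})$, split off the projected interaction, and bound the remainder—is the right one and matches the opening of the paper's proof. Your observation that the linear and cross contributions in $\mathcal{R}$ vanish exactly against the product state (because every such term carries an unmatched $Q$-creator or $Q$-annihilator, or reduces to $\tr{Qe_kQ\,\Gamma_{0,Q}^{(1)}}=0$ for $k\neq 0$) is in fact correct; the paper sidesteps this by applying the Cauchy--Schwarz inequality \eqref{eq: Second ICS UB LB} instead, which is why its error contains the additional $\varepsilon^{-1}r^{1/2}$ piece. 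So far so good.

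The genuine gap is in how you propose to bound $\lambda^2\sum_k\widehat{w^\varepsilon}(k)\langle|A_k^+|^2\rangle_{\tilde{\Gamma}}$. You attempt a partial Wick expansion in the Gaussian $Q$-factor, acknowledge that $\tilde{\Gamma}_{\lambda,P}$ is \emph{not} Gaussian, and then assert that ``a careful balancing\ldots produces the three terms in $\tilde{r}$'' without carrying out the computation. The paper's key observation—which you miss—is that $\tilde{\Gamma}$ is itself the Gibbs state of the modified Hamiltonian $\tilde{\mathbb{H}}^\varepsilon_{\lambda,P}=\dgamma{h}+\lambda\tilde{\mathbb{W}}_{\varepsilon,P}$ acting on the full Fock space, so the correlation estimate of Theorem~\ref{thm:corr} applies \emph{verbatim} to it, yielding directly
\begin{equation*}
\lambda^2\langle|A_k^+|^2\rangle_{\tilde{\Gamma}}\lesssim |\log\varepsilon|^2\Lambda^{-1/2+}-\lambda^2\log\lambda\bigl(|k|^2+|k|\Lambda^{1/2}\bigr)+\varepsilon^{-2}\lambda^2|\log\lambda|.
\end{equation*}
Summing against $\widehat{w^\varepsilon}(k)$ and optimizing the Cauchy--Schwarz parameter then produces the stated $\tilde{r}$. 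In particular, the $\Lambda^{1/2}$ factor in the term $\varepsilon^{-4}\lambda^2\Lambda^{1/2}|\log\lambda|$ comes from the double-commutator bound $\bigl[f_k^+,[h,f_k^+]\bigr]\lesssim|k|^2+|k|\Lambda^{1/2}$ inside the proof of the correlation estimate; a direct Wick-plus-Cauchy--Schwarz expansion of $\langle|A_k^+|^2\rangle_{\tilde{\Gamma}}$ does not obviously give rise to this commutator structure, so it is not clear your route reproduces the advertised error without substantial further work. You should supply either the full Wick bookkeeping with a verification that every resulting cross trace is controlled by $\tilde{r}$, or—much more cleanly—identify $\tilde{\Gamma}$ as a Gibbs state and re-invoke Theorem~\ref{thm:corr}.
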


\begin{proof}
    We start by defining the following trial state
    \begin{equation}
        \Tilde{\Gamma} = \mathcal{U}^*\left( \Tilde{\Gamma}_{\lambda,P} \otimes \left( \Gamma_0 \right)_Q \right) \mathcal{U} \ , \\
    \end{equation}
    where $\mathcal{U}$ is the unitary defined in \eqref{eq:unitary fock spaces}, and $\left( \Gamma_0 \right)_Q$ is the $Q$-localization of the non interacting Gibbs state $\Gamma_0$, defined as in \eqref{eq:P loc}, which in fact satisfies 
    \begin{equation}
        \left( \Gamma_0 \right)_Q = \Gamma_{0,Q} = \frac{e^{-\lambda \dgamma{Qh}}}{\text{Tr}_{\mathfrak{F}(Q\mathfrak{H})} \left[ e^{-\lambda \dgamma{Qh}} \right]} \ . \\
    \end{equation}
    Using \cite{LewNamRou-20}, Lemma 10.3, we know that the following holds
    \begin{equation} \label{eq:relative entropy + unitary}
        \mathcal{H}(\Tilde{\Gamma}, \Gamma_0) = \mathcal{H}(\Tilde{\Gamma}_{\lambda,P}, \Gamma_{0,P}) \ , \\
    \end{equation}
    which accounts for the first term in the variational principle of the projected free-energy. Next, we have
    \begin{equation} \label{eq:var term vpple}
        \lambda^2 \text{Tr}_{\mathfrak{F}(\mathfrak{H})} \left[ \Tilde{\mathbb{W}}_\varepsilon \Tilde{\Gamma}\right] = \frac{\lambda^2}{2} \sum_{k \in (2\pi \mathbb{Z})^2}\widehat{w^\varepsilon}(k) \textnormal{Tr}_{\mathfrak{F}(\mathfrak{H})} \left[ \left \lvert \dgamma{e_k} - \left \langle \dgamma{e_k} \right \rangle_0 \right \lvert^2 \Tilde{\Gamma} \right] - \lambda \tau^\varepsilon \textnormal{Tr}_{\mathfrak{F}(\mathfrak{H})} \left[ \left( \mathcal{N} - N_0  \right) \Tilde{\Gamma} \right] - E^\varepsilon \ . \\
    \end{equation}
    In order to control the first term, we use
    \begin{equation} \label{eq: Second ICS UB LB}
        \begin{split}
            \textnormal{Tr}_{\mathfrak{F}(\mathfrak{H})} \left[ \left \lvert \dgamma{e_k} - \left \langle \dgamma{e_k} \right \rangle_0 \right \lvert^2 \Tilde{\Gamma} \right] & \leq (1+\delta) \textnormal{Tr}_{\mathfrak{F}(\mathfrak{H})} \left[ \left \lvert \dgamma{e^-_k} - \left \langle \dgamma{e^-_k} \right \rangle_0 \right \lvert^2 \Tilde{\Gamma} \right] \\
            & \quad \quad + (1+\delta^{-1})\textnormal{Tr}_{\mathfrak{F}(\mathfrak{H})} \left[ \left \lvert \dgamma{e^+_k} - \left \langle \dgamma{e^+_k} \right \rangle_0 \right \lvert^2 \Tilde{\Gamma} \right] \\
        \end{split}
    \end{equation}
    for all $\delta>0$. From \eqref{eq: equivalent def P-loc}, and since $\left \lvert \dgamma{e^-_k} - \left \langle \dgamma{e^-_k} \right \rangle_0 \right \lvert^2$ is a self-adjoint operator on $\mathfrak{F}(P\mathfrak{H})$, hence acting trivially as the identity on $\mathfrak{F}(Q\mathfrak{H})$, we have
    \begin{equation}
        \begin{split}
            \textnormal{Tr}_{\mathfrak{F}(\mathfrak{H})} \left[ \left \lvert \dgamma{e^-_k} - \left \langle \dgamma{e^-_k} \right \rangle_0 \right \lvert^2 \Tilde{\Gamma} \right] & = \textnormal{Tr}_{\mathfrak{F}(P\mathfrak{H})} \left[ \left \lvert \dgamma{e^-_k} - \left \langle \dgamma{e^-_k} \right \rangle_0 \right \lvert^2 (\Tilde{\Gamma})_P \right] \\
            & = \textnormal{Tr}_{\mathfrak{F}(P\mathfrak{H})} \left[ \left \lvert \dgamma{e^-_k} - \left \langle \dgamma{e^-_k} \right \rangle_0 \right \lvert^2 \Tilde{\Gamma}_{\lambda,P} \right] \ . \\
        \end{split}
    \end{equation}
    Indeed, by definition of the $P$-localization of a state, and by cyclicity of the trace, we have
    \begin{equation}
        \begin{split}
            (\Tilde{\Gamma})_P & = \textnormal{Tr}_{\mathfrak{F}(Q\mathfrak{H})} \left[  \Tilde{\Gamma} \right] \\
            & = \textnormal{Tr}_{\mathfrak{F}(Q\mathfrak{H})} \left[  \mathcal{U}^*\left( \Tilde{\Gamma}_{\lambda,P} \otimes \left( \Gamma_0 \right)_Q \right) \mathcal{U} \right] \\
            & = \textnormal{Tr}_{\mathfrak{F}(Q\mathfrak{H})} \left[ \Tilde{\Gamma}_{\lambda,P} \otimes \left( \Gamma_0 \right)_Q \right] \\
            & = \Tilde{\Gamma}_{\lambda,P} \ . \\
        \end{split}
    \end{equation}
    Now, we would like to apply the correlation estimate \eqref{eq:quantitativeesti in thm} to control the term
    \begin{equation*}
        \textnormal{Tr}_{\mathfrak{F}(\mathfrak{H})} \left[ \left \lvert \dgamma{e^+_k} - \left \langle \dgamma{e^+_k} \right \rangle_0 \right \lvert^2 \Tilde{\Gamma} \right] \ . \\
    \end{equation*}
    To do so, remark that $\Tilde{\Gamma}$ is in fact a Gibbs state:
    \begin{equation} \label{eq: trial state as a gibbs state}
        \Tilde{\Gamma} = \frac{e^{-\lambda \Tilde{\mathbb{H}}^\varepsilon_{\lambda,P}}}{\textnormal{Tr}_{\mathfrak{F}(\mathfrak{H})} \left[ e^{-\lambda \Tilde{\mathbb{H}}^\varepsilon_{\lambda,P}} \right]} \ , \\
    \end{equation}
    where 
    \begin{equation}
        \Tilde{\mathbb{H}}^\varepsilon_{\lambda,P} = \dgamma{h} + \lambda \Tilde{\mathbb{W}}_{\varepsilon,P} \ . \\
    \end{equation}
    This comes from the fact that we can write every term in this Hamiltonian as quadratic and quartic polynomials in the creation and annihilation operators, then expand the exponential and use \eqref{eq:action of the unitary on creators and annihilators}. Hence, we are almost exactly in the right situation to apply \eqref{eq:quantitativeesti in thm}, the only difference being that we consider the projected interaction term $\Tilde{\mathbb{W}}_{\varepsilon,P}$ instead of the usual one $\Tilde{\mathbb{W}}_{\varepsilon}$. However, we can apply the proof in a very similar manner to \eqref{eq: trial state as a gibbs state}, and obtain eventually the following quantitative estimate
    \begin{equation}
        \lambda^2\textnormal{Tr}_{\mathfrak{F}(\mathfrak{H})} \left[ \left \lvert \dgamma{e^+_k} - \left \langle \dgamma{e^+_k} \right \rangle_0 \right \lvert^2 \Tilde{\Gamma} \right] \lesssim |\log \varepsilon|^2 \Lambda^{-1/2+} - \lambda^2 \log \lambda (|k|^2 + |k| \Lambda^{1/2}) + \varepsilon^{-2} \lambda^2 |\log \lambda| \ . \\
    \end{equation}
    Now, using the fact that we can express~\cite[Equation~(10.6)]{LewNamRou-20}  the first reduced density matrix of $\Tilde{\Gamma}$ as
    \begin{equation}
        \Tilde{\Gamma}^{(1)} = P\Tilde{\Gamma}_{\lambda,P}^{(1)}P + Q \Gamma_0^{(1)}Q \ , \\
    \end{equation}
    we have
    \begin{equation}
        \begin{split}
            \tr{\left( \mathcal{N} -N_0 \right) \Tilde{\Gamma}} & = \tr{P\Tilde{\Gamma}_{\lambda,P}^{(1)}P + Q \Gamma_0^{(1)}Q} - N_0 \\
            & = \tr{P\Tilde{\Gamma}_{\lambda,P}^{(1)}} + \tr{\left( 1-P \right)\Gamma_0^{(1)}} - N_0 \\
            & = \tr{\left(\dgamma{P} - \left \langle \dgamma{P}  \right \rangle_0 \right)\Tilde{\Gamma}_{\lambda,P}} \ . \\
        \end{split}
    \end{equation}
    Thus we find from \eqref{eq: Second ICS UB LB} that
    \begin{equation}
        \lambda^2 \textnormal{Tr}_{\mathfrak{F}(\mathfrak{H})} \left[ \Tilde{\mathbb{W}}_\varepsilon \Tilde{\Gamma} \right] \leq \lambda^2\textnormal{Tr}_{\mathfrak{F}(P \mathfrak{H})} \left[ \Tilde{\mathbb{W}}_{\varepsilon, P} \Tilde{\Gamma}_{\lambda, P}\right] + \delta C \varepsilon^{-2} + (1 + \delta^{-1}) C r(\varepsilon, \lambda, \Lambda) \ , \\
    \end{equation}
    where 
    \begin{equation}
        \begin{split}
            r(\varepsilon, \lambda, \Lambda) &= \sum_{k \in 2\pi\mathbb{Z}^2} \widehat{w^\varepsilon}(k) \left( |\log \varepsilon|^2 \Lambda^{-1/2+} - \lambda^2 \log \lambda (|k|^2 + |k| \Lambda^{1/2}) + \varepsilon^{-2} \lambda^2 |\log \lambda| \right) \\
            & \simeq \varepsilon^{-2}|\log \varepsilon|^2 \Lambda^{-1/2+} - \varepsilon^{-4} \lambda^2 \log \lambda -\varepsilon^{-4} \lambda^2\Lambda^{1/2} \log \lambda + \varepsilon^{-4} \lambda^2 |\log \lambda| \\
            & \lesssim \varepsilon^{-2}|\log \varepsilon|^2 \Lambda^{-1/2+}+\varepsilon^{-4} \lambda^2\Lambda^{1/2} |\log \lambda| \ . \\
        \end{split}
    \end{equation}
    Here we argued as when deriving~\eqref{eq: ICS LB before opt delta}. Hence, optimizing over $\delta >0$ yields
    \begin{equation}
        \lambda^2 \textnormal{Tr}_{\mathfrak{F}(\mathfrak{H})} \left[ \Tilde{\mathbb{W}}_\varepsilon \Tilde{\Gamma} \right] \leq \lambda^2\textnormal{Tr}_{\mathfrak{F}(P \mathfrak{H})} \left[ \Tilde{\mathbb{W}}_{\varepsilon, P} \Tilde{\Gamma}_{\lambda, P}\right] + C\left( \varepsilon^{-1}r(\varepsilon, \lambda, \Lambda)^{1/2} + r(\varepsilon, \lambda, \Lambda) \right) \ . \\
    \end{equation}
    Then, using the initial variational principle and \eqref{eq:relative entropy + unitary}, one obtains 
    \begin{equation}\label{eq:truc}
        \begin{split}
            - \log \frac{Z(\lambda)}{Z_0(\lambda)} & \leq \mathcal{H}(\Tilde{\Gamma}_{\lambda,P}, \Gamma_{0,P}) +\lambda^2 \text{Tr}_{\mathfrak{F}(P\mathfrak{H})} \left[ \Tilde{\mathbb{W}}_{\varepsilon,P} \Tilde{\Gamma}_{\lambda,P}\right] + C \left( \varepsilon^{-1}r(\varepsilon, \lambda, \Lambda)^{1/2} + r(\varepsilon, \lambda, \Lambda) \right) \\
        & = - \log \frac{Z_P(\lambda)}{Z_{0,P}(\lambda)} + C \left(  \varepsilon^{-1}r(\varepsilon, \lambda, \Lambda)^{1/2} + r(\varepsilon, \lambda, \Lambda) \right) \ , \\
        \end{split}
    \end{equation}
    which is the desired estimate. In the last line we used the fact that the projected Gibbs state $\Tilde{\Gamma}_{\lambda,P}$ is the unique minimizer for the variational principle
    \begin{equation}
        - \log \frac{Z_P(\lambda)}{Z_{0,P}(\lambda)} = \inf_{\Gamma \in \mathcal{S}(\mathfrak{F}(P\mathfrak{H}))} \left\{ \mathcal{H}(\Gamma, \Gamma_{0,P}) + \lambda^2 \text{Tr}_{\mathfrak{F}(P\mathfrak{H})} \left[ \Tilde{\mathbb{W}}_{\varepsilon,P} \Gamma\right] \right\} \ . \\
    \end{equation}
\end{proof}

We are now reduced to studying the projected relative free energy :
\begin{lemma}[\textbf{Finite dimensional semiclassics}]\label{lem:finite dim semi}\mbox{}\\
    Let $\lambda>0$, $\varepsilon = \lambda^{\eta}$ and $\Lambda = \lambda^{-\nu}$ for some $\eta, \nu >0$. We have 
    \begin{equation} \label{eq: Finite dimensional semiclassics}
        \begin{split}
            - \log \frac{Z_P(\lambda)}{Z_{0,P}(\lambda)} & \leq - \log \left( \int_{P \mathfrak{H}} e^{- W^\varepsilon_K[u]} \textnormal{d}\mu_{0,K}(u) \right) \\
            & \quad +  C \left( \frac{\lambda \varepsilon^{-2} \Lambda^{1+}}{\mathcal{Z}^\varepsilon_K} \int_{P \mathfrak{H}} \|u\|^2e^{- W^\varepsilon_K[u]} \textnormal{d}\mu_{0,K}(u)  + \lambda \varepsilon^{-2}\Lambda^{2+} - \lambda \Lambda^{1+} |\log \varepsilon| + \lambda \Lambda^{2+} \right) \ , \\
        \end{split}
    \end{equation}
    where $W^\varepsilon_K$ is the regularized renormalized interaction term, and $\mathcal{Z}^\varepsilon_K = \int_{P \mathfrak{H}} e^{- W^\varepsilon_K[u]} \textnormal{d}\mu_{0,K}(u)$.
\end{lemma}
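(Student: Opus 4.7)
The strategy is the upper-bound counterpart of Proposition~\ref{propfree energy lower bound}, obtained by plugging into the variational principle for $-\log Z_P(\lambda)/Z_{0,P}(\lambda)$ a trial state built by coherent-state quantization of the classical Gibbs measure, in the spirit of~\cite{DinRou-24}. Concretely I would set
\begin{equation*}
    \widetilde{\Gamma}_P := \int_{P\mathfrak{H}} \ket{\xi(u/\sqrt{\lambda})}\bra{\xi(u/\sqrt{\lambda})}\,\textnormal{d}\mu_K^\varepsilon(u),
    \qquad \textnormal{d}\mu_K^\varepsilon(u) := \frac{1}{\mathcal{Z}_K^\varepsilon}\, e^{-W_K^\varepsilon[u]}\,\textnormal{d}\mu_{0,K}(u).
\end{equation*}
Since coherent states have unit norm and $\mu_K^\varepsilon$ is a probability measure, $\widetilde{\Gamma}_P\in\mathcal{S}(\mathfrak{F}(P\mathfrak{H}))$, and the variational principle for the projected model yields
\begin{equation*}
    -\log \frac{Z_P(\lambda)}{Z_{0,P}(\lambda)}
    \le \mathcal{H}(\widetilde{\Gamma}_P, \Gamma_{0,P})
       + \lambda^2 \tr{\widetilde{\mathbb{W}}_{\varepsilon,P}\widetilde{\Gamma}_P}.
\end{equation*}
The goal is to show that the right-hand side is bounded by $-\log\mathcal{Z}_K^\varepsilon$ plus the four claimed corrections.

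For the interaction term, the identity $\ket{\xi(u/\sqrt{\lambda})}\bra{\xi(u/\sqrt{\lambda})}^{(k)} = (k!\lambda^k)^{-1}\ket{u^{\otimes k}}\bra{u^{\otimes k}}$ gives $\lambda^k k!\,\widetilde{\Gamma}_P^{(k)} = \int\ket{u^{\otimes k}}\bra{u^{\otimes k}}\textnormal{d}\mu_K^\varepsilon(u)$. Expanding $|\dgamma{e_k^-}-\langle\dgamma{e_k^-}\rangle_0|^2 = 2\dgamma{e_{-k}^-\otimes e_k^-} + \dgamma{|e_k^-|^2}$ plus affine corrections exactly as in~\eqref{eq:ek-}, the leading $\dgamma{e_{-k}^-\otimes e_k^-}$ contribution reconstructs $\tfrac{1}{2}\int\langle u,e_{-k}u\rangle\langle u,e_ku\rangle\textnormal{d}\mu_K^\varepsilon$, which after weighting by $\widehat{w^\varepsilon}(k)$ yields the quadratic Wick piece of $W_K^\varepsilon$ integrated against $\mu_K^\varepsilon$; the subleading $\dgamma{|e_k^-|^2}$ pieces summed against $\widehat{w^\varepsilon}(k)$ are controlled via $\sum_k\widehat{w^\varepsilon}(k)\lesssim\varepsilon^{-2}$, $\|e_k^-u\|\le\|u\|$, and $\tr{P}\lesssim\Lambda^{1+}$, accounting for the $\lambda\varepsilon^{-2}\Lambda^{1+}\int\|u\|^2\textnormal{d}\mu_K^\varepsilon$ and $\lambda\varepsilon^{-2}\Lambda^{2+}$ errors. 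The chemical-potential term $-\lambda\tau^\varepsilon\tr{(\mathcal{N}-N_0)\widetilde{\Gamma}_P}$ combined with the linear Wick counterterm $-\tau^\varepsilon\int\wick{|u|^2}\textnormal{d}x$ of $W_K^\varepsilon$ produces, via the computation of $\tr{P\widetilde{\Gamma}_P^{(1)}}$ parallel to~\eqref{eq: trPGamma01}, the residual $-\lambda\Lambda^{1+}|\log\varepsilon|$ contribution using $\tau^\varepsilon\simeq|\log\varepsilon|$. For the entropy term, viewing $\widetilde{\Gamma}_P$ as an upper-symbol representation with covariant symbol $\tilde\nu(u)=(\lambda\pi)^K\,\tfrac{\textnormal{d}\mu_K^\varepsilon}{\textnormal{d}u}$ against the resolution of identity $(\lambda\pi)^{-K}\textnormal{d}u$, the Berezin--Lieb upper bound applied to $f(x)=x\log x$ gives
\begin{equation*}
    \tr{\widetilde{\Gamma}_P\log\widetilde{\Gamma}_P}\le K\log(\lambda\pi) + \int\log\tfrac{\textnormal{d}\mu_K^\varepsilon}{\textnormal{d}u}\,\textnormal{d}\mu_K^\varepsilon.
\end{equation*}
Coupled with $\Gamma_{0,P}=Z_{0,P}^{-1}e^{-\lambda\dgamma{Ph}}$ and the explicit identity $\lambda\tr{Ph\widetilde{\Gamma}_P^{(1)}}=\int\langle u,hu\rangle\textnormal{d}\mu_K^\varepsilon$, the kinetic pieces cancel and the relative entropy reduces to $-\log\mathcal{Z}_K^\varepsilon-\int W_K^\varepsilon\textnormal{d}\mu_K^\varepsilon+\log[Z_{0,P}/((\lambda\pi)^K z_{0,K})]$. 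The remaining discrepancy coincides with $\log(\widetilde{z}_0/z_{0,K})$ already analyzed in the lower-bound proof via Lemma~\ref{lemma:Lower symbol for the Free gibbs state}, and contributes at most $\lambda\Lambda^{2+}$.

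The main obstacle is the careful bookkeeping that pairs the quantum Wick subtractions, encoded through $-\lambda\tau^\varepsilon(\mathcal{N}-N_0)$ and $-\lambda^{-2}E^\varepsilon$ inside $\widetilde{\mathbb{W}}_{\varepsilon,P}$, with their classical counterparts in $W_K^\varepsilon[u]$, with matching explicit constants. The $K\log(\lambda\pi)$ produced by the Berezin--Lieb step must exactly offset $\log z_{0,K}-\log Z_{0,P}$ up to the $\lambda\Lambda^{2+}$ error, which requires invoking Lemma~\ref{lemma:Lower symbol for the Free gibbs state} and the Taylor expansion $1-e^{-\lambda h}\simeq\lambda h$ on $P\mathfrak{H}$; this expansion is valid as long as $\lambda\Lambda\ll 1$, which is implied by the stated range $\nu\in(8\eta,1/2-\eta)$. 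Once this parallel to~\eqref{eq:ek-}--\eqref{eq:wickinter2} is carried out with the roles of the lower symbol $\mu_{P,\lambda}$ and the classical Gibbs measure $\mu_K^\varepsilon$ exchanged, combining the interaction and entropy estimates produces exactly~\eqref{eq: Finite dimensional semiclassics}.
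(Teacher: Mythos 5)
Your proposal is correct and ultimately equivalent to the paper's argument, but the bookkeeping is organized a little differently, which is worth noting. The paper splits the relative free energy as $-\log Z_P + \log Z_{0,P}$, bounds $-\log Z_P(\lambda)$ from above using the trial state $\Gamma_\nu$ and the \emph{second} Berezin--Lieb inequality, bounds $-\log Z_{0,P}(\lambda)$ from below using the lower symbol of $\Gamma_{0,P}$ and the \emph{first} Berezin--Lieb inequality together with the classical variational principle on $P\mathfrak{H}$, and chooses $\nu = \mu^\varepsilon_K$ only at the end after recognizing $\mathcal{H}_{\textnormal{cl}}(\nu,\mu_{0,K})$. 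You instead stay with the relative quantum variational principle, apply a single Berezin--Lieb inequality (the second) to $\operatorname{Tr}[\widetilde{\Gamma}_P\log\widetilde{\Gamma}_P]$, and use the fact that $\log\Gamma_{0,P} = -\lambda\dgamma{Ph} - \log Z_{0,P}$ is explicit and quasi-free, so $-\operatorname{Tr}[\widetilde{\Gamma}_P\log\Gamma_{0,P}]$ is computed exactly by the coherent-state identity for the one-body density matrix. This sidesteps the first Berezin--Lieb inequality and the passage through the lower symbol $\tilde\nu$ of $\Gamma_{0,P}$, at the cost of needing the normalization $\tilde z_0 = (\lambda\pi)^K Z_{0,P}$ from Lemma~\ref{lemma:Lower symbol for the Free gibbs state}, after which the residual $\log(\tilde z_0/z_{0,K}) \lesssim \lambda\Lambda^{2+}$ is handled exactly as in the lower-bound proof via the expansion $1 - e^{-\lambda h}\simeq\lambda h$ on $P\mathfrak{H}$. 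Both routes yield the same error terms. One small slip: your intermediate expression writes the residual as $\log\bigl[Z_{0,P}/((\lambda\pi)^K z_{0,K})\bigr]$, whereas it should read $\log\bigl[(\lambda\pi)^K Z_{0,P}/z_{0,K}\bigr] = \log(\tilde z_0/z_{0,K})$ — the $(\lambda\pi)^K$ sits in the numerator since $K\log(\lambda\pi)$ comes in with a plus sign from the second Berezin--Lieb inequality; you correctly identify it as $\log(\tilde z_0/z_{0,K})$ in the following sentence, so this is just a typo.
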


\begin{proof}
   We follow a strategy similar to~\cite[Proof of Lemma~5.4]{DinRou-24}: we control separately $- \log Z_P(\lambda)$ and $- \log Z_{0,P}(\lambda)$. Let us remind that the Gibbs state $\Tilde{\Gamma}_{\lambda, P}$ is the unique minimizer to the following variational principle
    \begin{equation} \label{eq:vpple Gammalambda proj}
        \begin{split}
            - \log Z_P(\lambda) & = \inf_{\Gamma \in \mathcal{S}(\mathfrak{F}(P\mathfrak{H}))} \left\{ \lambda \tr{\mathbb{H}^\varepsilon_{\lambda,P}\Gamma} + \tr{\Gamma \log \Gamma} \right\} \ , \\
        \end{split}
    \end{equation}
    and that similarly, $\Gamma_{0,P}$ is the unique minimizer to the following variational principle
    \begin{equation} \label{eq:vpple Gamma0 proj}
        \begin{split}
            - \log Z_{0,P}(\lambda) & = \inf_{\Gamma \in \mathcal{S}(\mathfrak{F}(P\mathfrak{H}))} \left\{ \lambda \tr{\mathbb{H}_{0,P}\Gamma} + \tr{\Gamma \log \Gamma} \right\} \ . \\
        \end{split}
    \end{equation}
    In order to control these terms separately, we will then have to derive an upper bound on $-\log Z_P(\lambda)$, and a lower bound on $- \log Z_{0,P}(\lambda)$. \\
    
    \noindent \textbf{Upper bound on $-\log Z_P(\lambda)$.} Here, we are free to insert any trial state in \eqref{eq:vpple Gammalambda proj}. Since we are now in finite dimension, we can define the following state
    \begin{equation} \label{eq:trial state}
        \Gamma_{\nu} = \int_{P\mathfrak{H}} \nu(u) \ket{\xi(u/\sqrt{\lambda})} \bra{\xi(u/\sqrt{\lambda})} \textnormal{d}u \in \mathcal{S}(\mathfrak{F}(P\mathfrak{H})) \ , \\
    \end{equation}
    where $\nu(u) = \Gamma^{\textnormal{up}}(u)$ is an \textit{upper symbol} for $\Gamma_\nu$. Notice that as long as $\nu$ is a probability measure which is absolutely continuous with respect to the Lebesgue measure on $\mathbb{C}^K$, $\Gamma_\nu$ will indeed be a state, for the coherent states are normalized on the Fock space. Let us also note that the $k^{\textnormal{th}}$ reduced density matrix of $\Gamma_\nu$ can be explicitly computed as
    \begin{equation} \label{eq:Gamma nu density matrix}
        \begin{split}
            \Gamma_\nu^{(k)} & = \frac{1}{k!\lambda^k} \int_{P \mathfrak{H}} \nu(u) \ket{u^{\otimes k}} \bra{u^{\otimes k}} \textnormal{d}u \ . \\
        \end{split}
    \end{equation}
    Using \eqref{eq:vpple Gammalambda proj} with $\Gamma_\nu$, we thus have
    \begin{equation}
        \begin{split}
            -\log Z_P(\lambda) \leq \lambda \tr{\mathbb{H}^\varepsilon_{\lambda,P} \Gamma_\nu} + \tr{\Gamma_\nu \log \Gamma_\nu} \ . \\
        \end{split}
    \end{equation}
    Let us start with the first term. We compute
    \begin{equation} \label{eq:upper bound upper bound trace}
        \begin{split}
            \tr{\mathbb{H}^\varepsilon_{\lambda,P} \Gamma_\nu}  = \frac{1}{\lambda}\int_{P \mathfrak{H}} \nu(u) \ps{u}{Phu} \textnormal{d}u & + \frac{\lambda}{2} \sum_{k \in (2\pi \mathbb{Z})^2}\widehat{w^\varepsilon}(k) \tr{\left \lvert \dgamma{e^-_k} - \left \langle \dgamma{e^-_k} \right \rangle_0 \right \lvert^2 \Gamma_\nu}  \\
            & - \tau^\varepsilon \tr{ \left( \dgamma{P} - \left \langle \dgamma{P} \right \rangle_0 \right) \Gamma_\nu} - \lambda^{-1}E^\varepsilon \ . \\
        \end{split}
    \end{equation}
    Treating the second term in the above expression is done in a very similar fashion as for the lower bound, meaning that we expand the square, and test the one and two body operators that appear against the trial state $\Gamma_\nu$. The argument starts from an identity similar to \eqref{eq:expanding the square in trace}. We have
    \begin{equation}
        \begin{split}
            \tr{\left \lvert \dgamma{e^-_k} - \left \langle \dgamma{e^-_k} \right \rangle_0 \right \lvert^2 \Gamma_\nu} & = 2 \tr{\left(e_{-k}^- \otimes e_{k}^- \right) \Gamma_{\nu}^{(2)}} + \tr{|e_k^-|^2 \Gamma_\nu^{(1)}} \\
            & \quad - 2 \Re \left \langle \dgamma{e_k^-} \right \rangle_0 \tr{e_{-k}^- \Gamma_\nu^{(1)}} + \left \lvert \left \langle \dgamma{e_k^-} \right \rangle_0 \right \lvert^2 \ . \\
        \end{split}
    \end{equation}
    Next, using \eqref{eq:Gamma nu density matrix} and \eqref{eqlambda mean dgamma e_k^-0}, we have
    \begin{equation}
        \begin{split}
            \tr{\left(e_{-k}^- \otimes e_{k}^- \right) \Gamma_{\nu}^{(2)}} & = \frac{1}{2 \lambda^2} \int_{P \mathfrak{H}} \nu(u) \ps{u}{e_{-k}^-u}\ps{u}{e_{k}^-u} \textnormal{d}u \\
            \tr{|e_k^-|^2 \Gamma_\nu^{(1)}} & = \frac{1}{\lambda}\int_{P \mathfrak{H}} \nu(u) \|e_k^-u\|^2 \textnormal{d}u \\
            \tr{e_{-k}^- \Gamma_\nu^{(1)}} & = \frac{1}{\lambda} \int_{P \mathfrak{H}} \nu(u) \ps{u}{e_{-k}^-u}\textnormal{d}u \\
            \lambda \left \langle \dgamma{e_k^-} \right \rangle_0 & = \int \ps{u}{e_{k}^-u} \textnormal{d}\mu_0(u) + \mathcal{O}(\lambda \Lambda^{1+}) \ . \\
        \end{split}
    \end{equation}
    Hence, we obtain
    \begin{equation}
        \begin{split}
            \lambda^2 \tr{\left \lvert \dgamma{e^-_k} - \left \langle \dgamma{e^-_k} \right \rangle_0 \right \lvert^2 \Gamma_\nu} & = \int_{P \mathfrak{H}} \nu(u) \left \lvert \ps{u}{e_{k}^-u} \right \lvert^2\textnormal{d}u - 2\Re \left \langle \ps{u}{e_{k}u}\right \rangle_{\mu_0} \int_{P \mathfrak{H}} \nu(u) \overline{\ps{u}{e_{k}u}}\textnormal{d}u \\
            & \quad + \left \lvert \left \langle \ps{u}{e_{k}u}\right \rangle_{\mu_0} \right \lvert^2 + 
            \lambda\int_{P \mathfrak{H}} \nu(u) \|e_k^-u\|^2 \textnormal{d}u + \mathcal{O}(\lambda \Lambda^{2+}) \\
            & = \int_{P\mathfrak{H}} \nu(u) {\left \lvert \ps{u}{e_{k}u} - \left \langle \ps{u}{e_{k}u}\right \rangle_{\mu_0} \right \lvert}^2 \textnormal{d}u  \\
            & \quad +  
            \lambda\int_{P \mathfrak{H}} \nu(u) \|e_k^-u\|^2 \textnormal{d}u + \mathcal{O}(\lambda \Lambda^{2+}) \ . \\
        \end{split}
    \end{equation}
    Here we used several times the fact that, since the integrals are taken over $P \mathfrak{H}$, we have $u = Pu$ for all $u \in P \mathfrak{H}$, hence $\ps{u}{e_{k}^-u} = \ps{u}{Pe_{k}Pu} = \ps{Pu}{e_k Pu} = \ps{u}{e_{k}u}$. Now, summing against $\widehat{w^\varepsilon}(k)$, one finds
    \begin{equation}
        \begin{split}
            \frac{\lambda^2}{2} \sum_{k \in (2\pi \mathbb{Z})^2}\widehat{w^\varepsilon}(k) &\tr{\left \lvert \dgamma{e^-_k} - \left \langle \dgamma{e^-_k} \right \rangle_0 \right \lvert^2 \Gamma_\nu} \leq \\
            &\int_{P \mathfrak{H}} \left( \frac{1}{2} \iint_{\mathbb{T}^2 \times \mathbb{T}^2}w^\varepsilon(x-y) \wick{|u(x)|^2} \wick{|u(y)|^2} \textnormal{d}x \textnormal{d}y \right) \nu(u) \textnormal{d}u \\
            & \ +  \frac{\lambda}{2} \int_{P \mathfrak{H}} \sum_{k \in (2\pi \mathbb{Z})^2} \widehat{w^\varepsilon}(k)\|e_k^-u\|^2 \nu(u) \textnormal{d}u + C \lambda \varepsilon^{-2}\Lambda^{2+} \ . \\
        \end{split}
    \end{equation}
    The third term in \eqref{eq:upper bound upper bound trace} is handled using \eqref{eq:Gamma nu density matrix} and \eqref{eq: trPGamma01}
    \begin{equation}
        \begin{split}
            \lambda \tr{ \left( \dgamma{P} - \left \langle \dgamma{P} \right \rangle_0 \right) \Gamma_\nu} & = \lambda \tr{P \Gamma_\nu^{(1)}} - \lambda \tr{P \Gamma_0^{(1)}} \\
            & = \int_{P \mathfrak{H}}\ps{u}{Pu} \nu(u) \textnormal{d}u - \int \ps{u}{Pu}\textnormal{d}\mu_0(u) + \mathcal{O}(\lambda \Lambda^{1+}) \\
            & = \int_{P \mathfrak{H}} \left( \|u\|^2 - \left \langle \|u\|^2 \right \rangle_{\mu_0}  \right) \nu(u) \textnormal{d}u + \mathcal{O}(\lambda \Lambda^{1+}) \\
            & = \int_{P \mathfrak{H}} \left( \int_{\mathbb{T}^2} \wick{|u(x)|^2} \right) \nu(u)\textnormal{d}u + \mathcal{O}(\lambda \Lambda^{1+}) \ . \\
        \end{split}
    \end{equation}
    Thus, we obtain the following estimate
    \begin{equation}
        \begin{split}
            \lambda \tr{\mathbb{H}^\varepsilon_{\lambda,P} \Gamma_\nu} &\leq \int_{P \mathfrak{H}} W^\varepsilon_K[u] \nu(u) \textnormal{d}u + \int_{P \mathfrak{H}}\ps{u}{hu}\nu(u) \textnormal{d}u + C \lambda \varepsilon^{-2} \Lambda^{1+} \int_{P \mathfrak{H}} \|u\|^2 \nu(u) \textnormal{d}u \\
            & \quad  + C \left( \lambda \varepsilon^{-2}\Lambda^{2+} - \lambda \Lambda^{1+} |\log \varepsilon| \right) \ . \\
        \end{split}
    \end{equation}
    Now, in order to control the term $\tr{\Gamma_\nu \log \Gamma_\nu}$, we are going to use the second Berezin-Lieb inequality recalled in~\eqref{eq:Berezinlieb 2} below. Let us first note that, in view of our trial state \eqref{eq:trial state}, we have to consider a different partition of unity. Indeed, changing variables $u \mapsto u/\sqrt{\lambda}$ in  the partition of unity \eqref{eq: cs partition unity}, we find the following rescaled version
    \begin{equation} \label{eq: cs partition unity rescaled}
        (\lambda \pi)^{-K} \int_{\mathfrak{K}} \ket{\xi(u/\sqrt{\lambda})} \bra{\xi(u/\sqrt{\lambda})} \textnormal{d}u = \mathbf{1}_{\mathfrak{F}(\mathfrak{K})} \ . \\
    \end{equation}
    With this in hand, we can find, by using the same scheme of proof as in Appendix~\ref{app:Berezin}, that for every convex function $f: \mathbb{R}^+ \rightarrow \mathbb{R}$, we have
    \begin{equation}
        \tr{f \left(\Gamma_\nu \right)} \leq (\lambda \pi)^{-K} \int_{P \mathfrak{H}} f \left( \frac{\nu(u)}{(\lambda \pi)^{-K}} \right) \textnormal{d}u \ . \\
    \end{equation}
    Hence, applying this with $f(x) = x \log x$, we find
    \begin{equation}
        \begin{split}
            \tr{\Gamma_\nu \log \Gamma_\nu} & \leq (\lambda \pi)^{-K} \int_{P \mathfrak{H}} \frac{\nu(u)}{(\lambda \pi)^{-K}} \log \left(\frac{\nu(u)}{(\lambda \pi)^{-K}} \right) \textnormal{d}u \\
            & = \log (\lambda \pi)^{K} + \int_{P\mathfrak{H}} \nu(u) \log \nu(u) \textnormal{d}u \ . \\
        \end{split}
    \end{equation}
    Gathering the previous estimates together, we finally obtain the upper bound
    \begin{equation} \label{eq:final upperb upperb}
        \begin{split}
            - \log Z_{P}(\lambda) & \leq  \int_{P \mathfrak{H}} W^\varepsilon_K[u] \nu(u) \textnormal{d}u + \int_{P\mathfrak{H}} \nu(u) \log \nu(u) \textnormal{d}u + \int_{P \mathfrak{H}}\ps{u}{hu}\nu(u) \textnormal{d}u \\
            & \quad + C \left( \lambda \varepsilon^{-2} \Lambda^{1+} \int_{P \mathfrak{H}} \|u\|^2 \nu(u) \textnormal{d}u+ \lambda \varepsilon^{-2}\Lambda^{2+} - \lambda \Lambda^{1+} |\log \varepsilon| + \log (\lambda \pi)^{K} \right) \ , \\
        \end{split}
    \end{equation}
    which holds for any probability measure $\nu$ on $P\gH$.
    
    \medskip
    
    \noindent \textbf{Lower bound on $- \log Z_{0,P}(\lambda)$.} Starting from \eqref{eq:vpple Gamma0 proj}, we have
    \begin{equation}
        - \log Z_{0,P}(\lambda) = \lambda \tr{\mathbb{H}_{0,P}\Gamma_{0,P}} + \tr{\Gamma_{0,P}\log \Gamma_{0,P}} \ . \\
    \end{equation}
    Let us define the lower symbol of $\Gamma_{0,P}$ at scale $\lambda$ as in \eqref{eq: lower symbol def}
    \begin{equation} \label{eq: lower symbol proof LB UB}
        \textnormal{d}\Tilde{\nu}(u) = (\lambda \pi)^{-K} \ps{\xi\left(u/\sqrt{\lambda}\right)}{\Gamma_{0,P}\xi\left(u/\sqrt{\lambda}\right)} \textnormal{d}u \ . \\
    \end{equation}
    Next, using \eqref{eq: lower symbols as de Finetti measures estimate} with $k=1$, we have
    \begin{equation}
        \left \lVert  \lambda \Gamma_{0,P}^{(1)} - \int_{P \mathfrak{H}} \ket{u}\bra{u}\textnormal{d}\Tilde{\nu}(u) \right \lVert_{\mathfrak{S}^1(P\mathfrak{H})} \leq \lambda K \tr{\Gamma_{0,P}} \ . \\
    \end{equation}
    Hence, we can write
    \begin{equation}
        \begin{split}
            \lambda \tr{\mathbb{H}_{0,P}\Gamma_{0,P}} &= \lambda \tr{Ph \Gamma_{0,P}^{(1)}} \\
            & = \tr{Ph\int_{P \mathfrak{H}} \ket{u}\bra{u}\textnormal{d}\Tilde{\nu}(u)} + \tr{Ph \left( \lambda \Gamma_{0,P}^{(1)} - \int_{P \mathfrak{H}} \ket{u}\bra{u}\textnormal{d}\Tilde{\nu}(u)\right)} \\
            & \geq \int_{P \mathfrak{H}} \ps{u}{Phu}\textnormal{d}\Tilde{\nu}(u) - \left \lVert Ph \right \lVert_{\textnormal{op}} \left \lVert  \lambda \Gamma_{0,P}^{(1)} - \int_{P \mathfrak{H}} \ket{u}\bra{u}\textnormal{d}\Tilde{\nu}(u) \right \lVert_{\mathfrak{S}^1(P\mathfrak{H})} \\
            & \geq \int_{P \mathfrak{H}} \ps{u}{hu}\textnormal{d}\Tilde{\nu}(u) - C \lambda \Lambda^{2+} \ . \\
        \end{split}
    \end{equation}
    Next, we use the first Berezin-Lieb inequality \eqref{eq:berezinlieb 1}. Similarly as for the upper bound on $- \log Z_P(\lambda)$, we slightly modify this inequality, using \eqref{eq: lower symbol proof LB UB} instead of \eqref{eq:lower symbol BL1}, to find that, for every convex function $f: \mathbb{R}^+ \rightarrow \mathbb{R}$, it holds that
    \begin{equation}
        \tr{f \left(\Gamma_{0,P} \right)} \geq (\lambda \pi)^{-K} \int_{P \mathfrak{H}}f \left( \frac{{\Tilde\nu}(u)}{(\lambda \pi)^{-K}} \right) \textnormal{d}u \ . \\
    \end{equation}
    Hence, using $f(x) = x \log x$ yields
    \begin{equation}
        \tr{\Gamma_{0,P} \log \Gamma_{0,P}} \geq \int_{P \mathfrak{H}} \Tilde{\nu}(u) \log \Tilde{\nu}(u) \textnormal{d}u + \log (\lambda \pi)^{K} \ . \\
    \end{equation}
    Thus, 
    \begin{equation} \label{eq:final lowerb upperb}
        \begin{split}
            - \log Z_{0,P}(\lambda) & \geq \int_{P \mathfrak{H}} \ps{u}{hu}\textnormal{d}\Tilde{\nu}(u) + \int_{P \mathfrak{H}} \Tilde{\nu}(u) \log \Tilde{\nu}(u) \textnormal{d}u + \log (\lambda \pi)^{K} - C \lambda \Lambda^{2+} \\
            & \geq \int_{P \mathfrak{H}} \ps{u}{hu}\textnormal{d}\mu_{0,K}(u) + \int_{P \mathfrak{H}} \mu_{0,K}(u) \log \mu_{0,K}(u) \textnormal{d}u + \log (\lambda \pi)^{K} - C \lambda \Lambda^{2+} \ , \\
        \end{split}
    \end{equation}
    where 
    \begin{equation}
        \textnormal{d}\mu_{0,K}(u) = \frac{1}{z_{0,K}}e^{-\ps{u}{hu}}\textnormal{d}u
    \end{equation}
    is the cylindrical projection of the infinite dimensional Gaussian measure $\mu_0$. We used the fact that $\mu_{0,K}$ is the unique minimizer of the following variational problem
    \begin{equation}
        \inf_{\mu \in \mathcal{P}(P\mathfrak{H})} \left\{ \int_{P \mathfrak{H}}\ps{u}{hu}\textnormal{d}\mu(u) + \int_{P \mathfrak{H}}\textnormal{d}\mu(u) \log \textnormal{d}\mu(u) \right\} \ . \\
    \end{equation}
    \medskip
    
    \noindent \textbf{Conclusion.} Combining \eqref{eq:final upperb upperb} and \eqref{eq:final lowerb upperb}, we obtain, for any probability measure $\nu$ on $P\gH$,
    \begin{equation}
        \begin{split}
            - \log \frac{Z_P(\lambda)}{Z_{0,P}(\lambda)} & \leq \int_{P \mathfrak{H}} W^\varepsilon_K[u] \nu(u) \textnormal{d}u + \int_{P \mathfrak{H}} \ps{u}{hu} \left( \nu(u) - \mu_{0,K}(u) \right) \textnormal{d}u \\
            & \quad + \int_{P \mathfrak{H}} \left( \nu(u) \log \nu(u) - \mu_{0,K}(u) \log \mu_{0,K}(u) \right) \textnormal{d}u \\
            & \quad + C \left( \lambda \varepsilon^{-2} \Lambda^{1+} \int_{P \mathfrak{H}} \|u\|^2 \nu(u) \textnormal{d}u+ \lambda \varepsilon^{-2}\Lambda^{2+} - \lambda \Lambda^{1+} |\log \varepsilon| + \lambda \Lambda^{2+} \right) \ . \\
        \end{split}
    \end{equation}
    A similar computation as in \eqref{eq: relative entropy computation lowerb} shows that
    \begin{equation}
        \begin{split}
            \mathcal{H}_{\textnormal{cl}}(\nu, \mu_{0,K}) &:= \int_{P \mathfrak{H}} \frac{\textnormal{d}\nu}{\textnormal{d}\mu_{0,K}}(u) \log \frac{\textnormal{d}\nu}{\textnormal{d}\mu_{0,K}}(u) \textnormal{d}\mu_{0,K}(u)\\
            & =\int_{P \mathfrak{H}} \ps{u}{hu}\left( \textnormal{d}\nu(u) - \textnormal{d}\mu_{0,K}(u)\right) + \int_{P \mathfrak{H}} \left( \textnormal{d}\nu(u) \log\textnormal{d} \nu(u) - \textnormal{d}\mu_{0,K}(u) \log \textnormal{d}\mu_{0,K}(u) \right)  \\
            & \quad +  \log z_{0,K} \int_{P \mathfrak{H}} \left( \textnormal{d}\nu(u) - \textnormal{d}\mu_{0,K}(u)\right) \ , \\
        \end{split} 
    \end{equation}
    and the last term vanishes, since $\nu$ is a probability measure. This yields
    \begin{equation}
        \begin{split}
            - \log \frac{Z_P(\lambda)}{Z_{0,P}(\lambda)} & \leq \int_{P \mathfrak{H}} W^\varepsilon_K[u] \nu(u) \textnormal{d}u + \mathcal{H}_{\textnormal{cl}}(\nu, \mu_{0,K}) \\
            & \quad + C \left( \lambda \varepsilon^{-2} \Lambda^{1+} \int_{P \mathfrak{H}} \|u\|^2 \nu(u) \textnormal{d}u+ \lambda \varepsilon^{-2}\Lambda^{2+} - \lambda \Lambda^{1+} |\log \varepsilon| + \lambda \Lambda^{2+} \right) \ . \\
        \end{split}
    \end{equation}
    We now choose $\nu$ to be the truncated counterpart of the nonlinear Gibbs measure \eqref{eqnonlinear gibbs meas def in classical model}
    \begin{equation}
        \textnormal{d}\nu(u) = \frac{1}{\mathcal{Z}^\varepsilon_K} e^{- W^\varepsilon_K[u]} \textnormal{d}\mu_{0,K}(u) \quad , \quad \mathcal{Z}^\varepsilon_K = \int_{P \mathfrak{H}} e^{- W^\varepsilon_K[u]} \textnormal{d}\mu_{0,K}(u)
    \end{equation}
   and, by the classical variational principle we obtain
    \begin{equation}
        \int_{P \mathfrak{H}}W^\varepsilon_K[u] \nu(u) \textnormal{d}u + \mathcal{H}_{\textnormal{cl}}(\nu, \mu_{0,K}) = - \log \left( \int_{P \mathfrak{H}} e^{- W^\varepsilon_K[u]} \textnormal{d}\mu_{0,K}(u) \right) \ , \\
    \end{equation}
    thereby concluding the proof.
\end{proof}

Finally we give the 

\begin{proof}[Proof of Proposition \ref{propFree energy upper bound}]
    Combining \eqref{eq: Reduction to a finite dimensional estimate} and \eqref{eq: Finite dimensional semiclassics}, we obtain
    \begin{equation}
        - \log \frac{Z(\lambda)}{Z_0(\lambda)} \leq - \log \left( \int_{P\mathfrak{H}} e^{-W^\varepsilon_K[u]} \text{d}\mu_{0,K}(u)\right) + C {\sigma}(\varepsilon, \lambda, \Lambda) \ , \\
    \end{equation}
    where 
    \begin{equation}
        \sigma(\varepsilon, \lambda, \Lambda) = \Tilde{g}(\varepsilon, \lambda, \Lambda) + \frac{\lambda \varepsilon^{-2} \Lambda^{1+}}{\mathcal{Z}^\varepsilon_K} \int_{P \mathfrak{H}} \|u\|^2e^{- W^\varepsilon_K[u]} \textnormal{d}\mu_{0,K}(u)  + \lambda \varepsilon^{-2}\Lambda^{2+} - \lambda \Lambda^{1+} |\log \varepsilon| + \lambda \Lambda^{2+} \ . \\
    \end{equation}
    Let us now deal with the second term in the above. To do so, we start by noting that, for all $\alpha > 0$
    \begin{equation} \label{eqfernique thm}
        \|u\|_{\mathfrak{H}^{-\alpha}} < + \infty \ \mu_0 \textnormal{-as }\Longleftrightarrow  \tr{h^{-(1+\alpha)}} < + \infty \ . \\
    \end{equation}
    This is \textit{Fernique's theorem}, see \cite[Equation~(3.4)]{LewNamRou-15}. Hence, since we indeed have 
    $$\tr{h^{-(1+\alpha)}} < + \infty \mbox{ for all } \alpha >0 \ , $$
    we control
    \begin{equation}
        \begin{split}
            \int_{P \mathfrak{H}} \|u\|^2e^{- W^\varepsilon_K[u]} \textnormal{d}\mu_{0,K}(u) & = \int_{h \leq \Lambda} \braket{u|u}e^{- W^\varepsilon_K[u]} \textnormal{d}\mu_{0,K}(u) \\
            & \leq \Lambda^{\alpha} \int_{h \leq \Lambda} \braket{u|h^{-\alpha}u}e^{- W^\varepsilon_K[u]} \textnormal{d}\mu_{0,K}(u) \\
            & = \Lambda^{\alpha} \int_{h \leq \Lambda} \|u\|^2_{\mathfrak{H}^{-\alpha}}e^{- W^\varepsilon_K[u]} \textnormal{d}\mu_{0,K}(u) \\
            & \leq \Lambda^{\alpha} \left \lVert \|u\|_{\mathfrak{H}^{-\alpha}} \right \lVert_{L^4(\textnormal{d}\mu_{0})}^2\left \lVert e^{- W^\varepsilon_K} \right \lVert_{L^2(\textnormal{d}\mu_{0})} \ . \\
        \end{split}
    \end{equation}
    In the last line, we used Hölder's inequality. Now, Fernique's theorem actually ensures that \eqref{eqfernique thm} is equivalent to  
\begin{equation}
\int_{\mathfrak{H}^{-\alpha}} e^{\kappa \|u\|^2_{\mathfrak{H}^{-\alpha}}} \textnormal{d} \mu_0(u) < +\infty \quad \textnormal{ for some } \kappa > 0 \ , \\
\end{equation}    
which in particular implies that $\left \lVert \|u\|_{\mathfrak{H}^{-\alpha}} \right \lVert_{L^4(\textnormal{d}\mu_{0})} < +  \infty $. Next, using \cite[Proposition~4.3]{FroKnoSchSoh-22},  we also know that for all $q \geq 1$, there exists a constant $C_q > 0$ such that 
    \begin{equation}
        \left \lVert e^{- W^\varepsilon_K} \right \lVert_{L^q(\textnormal{d}\mu_{0})} \leq C_q \ \ \textnormal{ uniformly in } \varepsilon \ . \\
    \end{equation}
    Now, by Proposition \ref{propConvergence regularized renormalized partition function} below, we have $\mathcal{Z}^\varepsilon_K \longrightarrow \mathcal{Z}$ as $\lambda \rightarrow 0^+$, where $\mathcal{Z}$ is the partition function of the renormalized $\Phi^4_2$ measure \eqref{eqphi42 meas def in classical model}. Hence, we know that for all $\delta > 0$, we have $|\mathcal{Z}^\varepsilon_K - \mathcal{Z}| < \delta$, when $\lambda$ is small enough. Hence, using that $\mathcal{Z}^\varepsilon_K \geq 0$ and the reversed triangle inequality $ |a - b| \geq \left \lvert |a| - |b| \right \lvert$, we have
    \begin{equation}
        \begin{split}
            \mathcal{Z}^\varepsilon_K & = |\mathcal{Z} - (\mathcal{Z} - \mathcal{Z}^\varepsilon_K)| \\
            & \geq |\mathcal{Z}| - |\mathcal{Z} - \mathcal{Z}^\varepsilon_K| \\
            & \geq |\mathcal{Z}| - \delta \ . \\
        \end{split}
    \end{equation}
    Hence choosing $\alpha, \delta >0$ small enough and using \eqref{eq:tilde g in the reduction to finite dim}, we have
    \begin{equation}
        \begin{split}
            \sigma(\varepsilon, \lambda, \Lambda) & \leq  \varepsilon^{-2}\Lambda^{-1/4+}|\log \varepsilon| + \varepsilon^{-3}\lambda \Lambda^{1/4}|\log \lambda|^{1/2} + \varepsilon^{-4}\lambda^2 \Lambda^{1/2}|\log \lambda| \\
            & \quad + \frac{C}{|\mathcal{Z}| - \delta}  \lambda \varepsilon^{-2} \Lambda^{1+}  + \lambda \varepsilon^{-2}\Lambda^{2+} - \lambda \Lambda^{1+} |\log \varepsilon| + \lambda \Lambda^{2+} \\
            & \lesssim \varepsilon^{-2}\Lambda^{-1/4+}|\log \varepsilon| + \varepsilon^{-3}\lambda \Lambda^{1/4}|\log \lambda|^{1/2} + \varepsilon^{-4}\lambda^2 \Lambda^{1/2}|\log \lambda| + \lambda \varepsilon^{-2}\Lambda^{2+} \ . \\
        \end{split}
    \end{equation}
Thus, we deduce that $\nu$ and $\eta$ must satisfy the following conditions in order to have $\sigma(\varepsilon, \lambda, \Lambda) = o(1)$ when $\lambda \rightarrow 0^+$
    \begin{equation}
            \begin{cases}
                \nu > 8 \eta \\
                \nu < 4 - 12 \eta \\
                \nu < 1/2 - \eta
            \end{cases} \ . \\
    \end{equation}
    Notice that we have $1/2 - \eta < 4 - 12 \eta$ if and only if $\eta < 7/22$, which will eventually be the case, since in the lower bound we chose $\eta < 1/24$. Hence we must have
    \begin{equation}
        8 \eta < \nu < \frac{1}{2} - \eta \ . \\
    \end{equation}
    If $8 \eta < \frac{1}{2} - \eta$, ie $\eta < 1/18$ we may always find a suitable $\nu$ to conclude the proof.
\end{proof}

\subsection{Proof of the main result}
Let us now give the 
\begin{proof}[Proof of Theorem \ref{thmConvergence of the relative free-energy}]           First, note that 
    \begin{equation}
        \begin{cases}
            \eta \in (0,1/24) \\
            \nu \in (8 \eta, 1/3)
        \end{cases}
        \Longrightarrow \ \
        \begin{cases}
            \eta \in (0,1/18) \\
            \nu \in (8 \eta, 1/2 - \eta)
        \end{cases} \ . \\
    \end{equation}
    Hence it is sufficient to assume the conditions from Proposition \ref{propfree energy lower bound}. Combining \eqref{eqlower bound in prop} and \eqref{equpper bound in prop}, we obtain
    \begin{equation} \label{eqfinal eq before limit}
        -\log \frac{Z(\lambda)}{Z_0(\lambda)} = - \log \left( \int_{P \mathfrak{H}}e^{-W^\varepsilon_K[u]} \textnormal{d}\mu_{0,K}(u) \right) + o_{\lambda \rightarrow0^+}(1) \ . \\
    \end{equation}
    Now using \eqref{eqconvergence Phi42 in prop} from Proposition \ref{propConvergence regularized renormalized partition function}, and the continuity of the logarithm, we find that
    \begin{equation}
        - \log \left(\int_{P\mathfrak{H}}e^{-W^\varepsilon_K(u)}\textnormal{d}\mu_{0,K}(u)\right) \underset{\lambda \rightarrow 0^+}{\longrightarrow}  - \log \left(\int e^{-V(u)}\textnormal{d}\mu_{0}(u) \right)
    \end{equation}
    Hence, passing to the limit in \eqref{eqfinal eq before limit}, we get
    \begin{equation}
         -\log \frac{Z(\lambda)}{Z_0(\lambda)}\underset{\lambda \rightarrow 0^+}{\longrightarrow}  - \log \left(\int e^{-V(u)}\textnormal{d}\mu_{0}(u) \right)
    \end{equation}
    as desired.
\end{proof}

\newpage
\appendix

\section{Lower symbol for the free Gibbs state}
Here we compute the lower symbol (in the sense of Definition~\ref{def:lowsymb}) associated to the free Gibbs state
\begin{equation} \label{eq:free gibbs state appendix}
    \Gamma_0 = \frac{e^{-\lambda\dgamma{h}}}{\textnormal{Tr}_{\mathfrak{F}(\mathfrak{H})} \left[ e^{-\lambda\dgamma{h}} \right]}
\end{equation}
on $P \mathfrak{H}$. One important fact that will simplify the calculations is that 
\begin{equation}\label{eq:bidule}
    \left( \Gamma_0 \right)_P = \Gamma_{0,P} = \frac{e^{-\lambda\dgamma{Ph}}}{\textnormal{Tr}_{\mathfrak{F}(P\mathfrak{H})} \left[ e^{-\lambda\dgamma{Ph}} \right]} \ . \\
\end{equation}
We then claim the following

\begin{lemma}[\textbf{Lower symbol for the free Gibbs state}]\label{lemma:Lower symbol for the Free gibbs state}\mbox{}\\
    Let $\Gamma_0$ be as in \eqref{eq:free gibbs state appendix}. Let $h = -\Delta + 1$ and $P = \mathbf{1}_{h \leq \Lambda}$, for some $\Lambda>0$. Consider the lower symbol of $\Gamma_0$ on $P\mathfrak{H}$ at scale $\lambda > 0$, defined by
    \begin{equation}
        \textnormal{d} \mu^\lambda_{P,\Gamma_0}(u) = (\lambda \pi)^{-K} \ps{\xi(u/\sqrt{\lambda})}{\Gamma_{0,P} \xi(u/\sqrt{\lambda})}_{\mathfrak{F}(P\mathfrak{H})} \textnormal{d}u \ , \\
    \end{equation}
    where $\textnormal{d}u$ is the Lebesgue measure on $P\mathfrak{H} \simeq \mathbb{C}^K$, with $K = \tr{P}$ and the coherent state $\xi(u/\sqrt{\lambda})$ is as in Definition~\ref{def:coh}. Then
    \begin{equation}
        \textnormal{d} \mu^\lambda_{P,\Gamma_0}(u) = \frac{1}{\Tilde{z}_0}e^{-\ps{u}{\Tilde{h}u}}\text{d}u \ , \\
    \end{equation}
    where $\Tilde{z}_0= (\lambda \pi)^{K}\prod_{j=1}^K\left(1-e^{-\lambda \lambda_j} \right)^{-1}$, and $\Tilde{h}= \lambda^{-1}\left(1-e^{-\lambda h} \right)$.
\end{lemma}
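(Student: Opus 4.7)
The strategy is to diagonalize everything in the eigenbasis $\{e_j\}_{j=1}^K$ of $Ph$ on $P\mathfrak{H}$, reducing the computation to a product of independent single-mode Gaussian-like integrals.

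First, using the identification \eqref{eq:bidule}, it suffices to compute
$$
\ps{\xi(u/\sqrt{\lambda})}{\Gamma_{0,P}\,\xi(u/\sqrt{\lambda})}_{\mathfrak{F}(P\mathfrak{H})}
= \frac{1}{Z_{0,P}(\lambda)}\ps{\xi(u/\sqrt{\lambda})}{e^{-\lambda \dgamma{Ph}}\,\xi(u/\sqrt{\lambda})}_{\mathfrak{F}(P\mathfrak{H})}.
$$
Decomposing any $u\in P\mathfrak{H}$ as $u=\sum_{j=1}^K \alpha_j e_j$ with $\alpha_j = \ps{e_j}{u}$, the tensor-product factorization $\mathfrak{F}(P\mathfrak{H}) \simeq \bigotimes_{j=1}^K \mathfrak{F}(\mathbb{C} e_j)$ splits the relevant operators: $\dgamma{Ph}=\sum_j \lambda_j N_j$ where $N_j=a^\dagger_j a_j$ is the number operator of mode $j$, and by \eqref{eq:action of the unitary on creators and annihilators} the coherent state factorizes as $\xi(u/\sqrt{\lambda}) = \bigotimes_{j=1}^K \xi(\alpha_j/\sqrt{\lambda})$. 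Likewise $Z_{0,P}(\lambda) = \prod_{j=1}^K (1-e^{-\lambda \lambda_j})^{-1}$ by the standard bosonic computation (geometric series in each mode).

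The problem thus reduces to computing, for each single mode of frequency $\lambda_j$,
$$
\ps{\xi(z)}{e^{-\beta N}\xi(z)} \qquad \text{with } z=\alpha_j/\sqrt{\lambda},\ \beta=\lambda\lambda_j.
$$
Expanding the coherent state in the occupation-number basis, $\xi(z) = e^{-|z|^2/2}\sum_{n\geq 0} \frac{z^n}{\sqrt{n!}}|n\rangle$, and using $e^{-\beta N}|n\rangle = e^{-\beta n}|n\rangle$, one obtains the absolutely convergent series
$$
\ps{\xi(z)}{e^{-\beta N}\xi(z)} = e^{-|z|^2}\sum_{n\geq 0}\frac{|z|^{2n}e^{-\beta n}}{n!} = \exp\bigl(-|z|^2(1-e^{-\beta})\bigr).
$$
Substituting back gives
$$
\ps{\xi(\alpha_j/\sqrt{\lambda})}{e^{-\lambda\lambda_j N_j}\xi(\alpha_j/\sqrt{\lambda})} = \exp\Bigl(-\tfrac{1-e^{-\lambda\lambda_j}}{\lambda}|\alpha_j|^2\Bigr).
$$

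Taking the product over $j=1,\dots,K$ and multiplying by the prefactor $(\lambda\pi)^{-K}$ yields
$$
\text{d}\mu_{P,\Gamma_0}^\lambda(u) = (\lambda\pi)^{-K}\prod_{j=1}^K(1-e^{-\lambda\lambda_j})\,\exp\Bigl(-\sum_{j=1}^K \tfrac{1-e^{-\lambda\lambda_j}}{\lambda}|\alpha_j|^2\Bigr)\,\text{d}u.
$$
Recognizing the quadratic form in the exponent as $\ps{u}{\widetilde{h}\,u}$ with $\widetilde{h}=\lambda^{-1}P(1-e^{-\lambda h})P$ (diagonal in the basis $\{e_j\}$, with eigenvalues $(1-e^{-\lambda\lambda_j})/\lambda$), and the normalization as $\widetilde{z}_0^{-1}$ with $\widetilde{z}_0=(\lambda\pi)^K\prod_{j=1}^K(1-e^{-\lambda\lambda_j})^{-1}$, we obtain the claimed formula. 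The only ``obstacle'' is purely bookkeeping: keeping the factorization straight so that the single-mode calculation cleanly yields a Gaussian on $P\mathfrak{H}$; no estimation or limit is needed.
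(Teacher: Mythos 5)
Your proof is correct and follows essentially the same route as the paper's: factorize $\mathfrak{F}(P\mathfrak{H})$ over modes, reduce to the single-mode overlap $\langle\xi(z),e^{-\beta N}\xi(z)\rangle=\exp(-|z|^2(1-e^{-\beta}))$ by expanding the coherent state in the number basis, then multiply across modes and absorb the normalizations. The only difference is presentational — you isolate the single-mode identity as a clean sub-step rather than carrying the full tensor product through the BCH expansion as the paper does — and your write-up incidentally avoids a small typo in the paper's computation of $Z_{0,P}$ (the geometric series there should start at $n=0$).
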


\begin{proof}
    We start from~\eqref{eq:bidule} and recall the spectral decomposition $h = \sum_{j\geq1}\lambda_j \ket{u_j}\bra{u_j}$. We can write $\mathfrak{H} = \bigoplus_{j\geq1} \left(\mathbb{C}u_j \right)$, which, using the factorization property of the Fock space, implies that
    \begin{equation}
        \mathfrak{F}(\mathfrak{H}) \simeq \bigotimes_{j=1}^\infty \mathfrak{F}(\mathbb{C}u_j) \ . \\
    \end{equation}
    Quite similarly, since $Ph = \sum_{j=1}^K\lambda_j \ket{u_j}\bra{u_j}$, we have for the finite dimensional Hilbert space $P \mathfrak{H}$, the corresponding Fock space
    \begin{equation}
        \mathfrak{F}(P\mathfrak{H}) \simeq \bigotimes_{j=1}^K \mathfrak{F}(\mathbb{C}u_j)
    \end{equation}
    Denoting $a_j = a(u_j)$ and $a^\dagger_j = a^\dagger(u_j)$ we have
    \begin{equation}
        \dgamma{Ph} = \sum_{j=1}^K\lambda_j a^\dagger_ja_j \ . \\
    \end{equation}
    Hence, using the CCR, we can write
    \begin{equation}
        e^{-\lambda \dgamma{Ph}} = \bigotimes_{j=1}^Ke^{-\lambda\lambda_ja^\dagger_ja_j} \ . \\
    \end{equation}
    Thus
    \begin{equation}
        \begin{split}
            \text{Tr}_{\mathfrak{F}(\mathfrak{H})} \left[ e^{-\lambda \dgamma{Ph}}\right] & = \prod_{j=1}^K\text{Tr}_{\mathfrak{F}(\mathbb{C}u_j)} \left[ e^{-\lambda\lambda_ja^\dagger_ja_j}\right] \\
            & = \prod_{j=1}^K \sum_{n=1}^\infty e^{-\lambda\lambda_jn} \\
            & = \prod_{j=1}^{K}\left(1-e^{-\lambda \lambda_j} \right)^{-1} \ . \\
        \end{split}
    \end{equation}
    Hence, we obtain the followig expression for $\Gamma_{0,P}$
    \begin{equation}\label{eq:gamma0P}
        \Gamma_{0,P} = \bigotimes_{j=1}^K\left(1-e^{-\lambda \lambda_j} \right)e^{-\lambda\lambda_ja^\dagger_ja_j} \ . \\
    \end{equation}
    Writing a generic vector as $v = \sum_{j=1}^K\alpha_ju_j$, where $\alpha_j = \ps{u_j}{v}$ we obtain, using again the CCR, that the Weyl operator from Definition~\ref{def:coh} acts as
    \begin{equation}
        W(v) = \bigotimes_{j=1}^K \exp \left(\alpha_ja^\dagger_j - \overline{\alpha_j} a_j \right) \ . \\
    \end{equation}
    Hence, using the Baker-Campbell-Haussdorff formula and the properties of the creation and annihilation operators, we get
    \begin{equation} \label{eq:bch}
         \begin{split}
             \xi\left(\frac{u}{\sqrt{\lambda}} \right) & = \bigotimes_{j=1}^K \exp \left(\frac{\alpha_j}{\sqrt{\lambda}}a^\dagger_j - \frac{\overline{\alpha_j}}{\sqrt{\lambda}} a_j \right) \ket{0} \\
             & = \bigotimes_{j=1}^Ke^{-\frac{|\alpha_j|^2}{2\lambda}}e^{\frac{\alpha_j}{\sqrt{\lambda}}a^\dagger_j}e^{- \frac{\overline{\alpha_j}}{\sqrt{\lambda}} a_j} \ket{0}_j \\
             & = \bigotimes_{j=1}^Ke^{-\frac{|\alpha_j|^2}{2\lambda}}\bigoplus_{n=0}^\infty \frac{\alpha_j^n}{\sqrt{n!}}u_j^{\otimes n} \ . \\
         \end{split}
    \end{equation}
    Combining \eqref{eq:gamma0P} and \eqref{eq:bch}, we  find
    \begin{equation}
        \Gamma_{0,P} \xi\left(\frac{u}{\sqrt{\lambda}} \right) = \bigotimes_{j=1}^K \left(1-e^{-\lambda \lambda_j} \right)e^{-\frac{|\alpha_j|^2}{2\lambda}}  \bigoplus_{n=0}^\infty \frac{\alpha_j^n}{\sqrt{n!}}e^{-\lambda\lambda_ja^\dagger_ja_j}u_j^{\otimes n} \ . \\
    \end{equation}
    Now, since the creation and annihilation operators are bounded operators on each sector of the Fock space, we can expand the quantity $e^{-\lambda\lambda_ja^\dagger_ja_j}$ without any domain considerations. Using their definition, we first notice that for all $n \geq 1$, 
    \begin{equation}
        \begin{split}
            a^\dagger_ju_j^{\otimes n} & = \sqrt{n+1}u_j^{\otimes (n+1)} \\
            a_ju_j^{\otimes n} & = \sqrt{n} u_j^{\otimes n}
        \end{split}
    \end{equation}
    so that, using the CCR, we have
    \begin{equation}
        \begin{split}
            e^{-\lambda\lambda_ja^\dagger_ja_j}u_j^{\otimes n} & = \sum_{k=0}^\infty (-1)^k \frac{(\lambda \lambda_j)^k}{k!}(a^\dagger_j a_j)^ku_j^{\otimes n} \\
            & = \sum_{k=0}^\infty (-1)^k \frac{(\lambda \lambda_j)^k}{k!}n^ku_j^{\otimes n} \\
            & = e^{-\lambda \lambda_j n}u_j^{\otimes n} \ . \\
        \end{split}
    \end{equation}
    Now we can compute explicitely $\text{d}\mu^\lambda_{P,\Gamma_0}$. Using the definition of the scalar product on the Fock space, we have
    \begin{equation}
        \begin{split}
            \text{d}\mu^\lambda_{P,\Gamma_0}(u) & = (\lambda\pi)^{-K} \prod_{j=1}^K \left \langle e^{-\frac{|\alpha_j|^2}{2\lambda}}\bigoplus_{n=0}^\infty \frac{\alpha_j^n}{\sqrt{n!}}u_j^{\otimes n} , \left(1-e^{-\lambda \lambda_j} \right)e^{-\frac{|\alpha_j|^2}{2\lambda}}  \bigoplus_{n=0}^\infty \frac{\alpha_j^n}{\sqrt{n!}}e^{-\lambda \lambda_j n}u_j^{\otimes n} \right \rangle_{\mathfrak{F}(\mathbb{C}u_j)} \text{d}u \\
            & = (\lambda\pi)^{-K} \prod_{j=1}^K e^{-\frac{|\alpha_j|^2}{\lambda}}\left(1-e^{-\lambda \lambda_j} \right) \sum_{n=0}^\infty \frac{\left( \frac{|\alpha_j|^2}{\lambda} e^{-\lambda \lambda_j}\right)^n}{n!} \ps{u_j^{\otimes n}}{u_j^{\otimes n}} \text{d}u\\
            & = (\lambda\pi)^{-K} \prod_{j=1}^K \left(1-e^{-\lambda \lambda_j} \right)e^{-\frac{|\alpha_j|^2}{\lambda} \left( 1-e^{-\lambda \lambda_j} \right)} \text{d}u\\
            & = \frac{1}{\Tilde{z}_0}e^{-\ps{u}{\Tilde{h}u}}\text{d}u
        \end{split}
    \end{equation}
    with $\Tilde{z}_0$ and $\Tilde{h}$ as in the statement of the lemma. This gives the result
\end{proof}

\section{Berezin-Lieb inequalities on the Fock space}\label{app:Berezin}
We follow closely the lines of \cite[Appendix~B]{Rougerie-LMU} to prove two Berezin-Lieb~\cite{Berezin-72,Lieb-73b,Simon-80} inequalities on the bosonic Fock space $\mathfrak{F}(\mathfrak{K})$ of a finite dimensional Hilbert space $\mathfrak{K}$. The main difference here is that we start with a coherent state partition of unity, whereas the Berezin-Lieb inequalities in \cite{Rougerie-LMU} use a Hartree state partition of unity.

We  start by recalling the coherent state partition of unity, implied by \eqref{eq: coherent state} (see e.g.~\cite{Rougerie-EMS})
\begin{lemma}[\textbf{Coherent state partition of unity}]\mbox{}\\
    Let $\mathfrak{K}$ be a finite dimensional Hilbert space, and denote $K = \dim \mathfrak{H}$. Let $\left\{u_k\right\}_{k=1}^K$ be an orthonormal basis of $\mathfrak{K}$. Let $\textnormal{d}u$ be the Lebesgue measure on $\mathfrak{K} \simeq \mathbb{C}^K$. Let $\xi(u)$ be the coherent state defined in \eqref{eq: coherent state}. Then, the following holds
    \begin{equation} \label{eq: cs partition unity}
        \pi^{-K} \int_{\mathfrak{K}} \ket{\xi(u)} \bra{\xi(u)} \textnormal{d}u = \mathbf{1}_{\mathfrak{F}(\mathfrak{K})} \ . \\
    \end{equation}
\end{lemma}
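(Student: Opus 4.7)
The plan is to verify the identity weakly, by computing its matrix elements in a convenient orthonormal basis of $\mathfrak{F}(\mathfrak{K})$, and reducing to an explicit Gaussian integral on $\mathbb{C}^K$. Since $\mathfrak{K}$ is finite-dimensional and both sides of \eqref{eq: cs partition unity} are bounded operators on the bosonic Fock space, it suffices to show they agree on a total set of vectors.

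First I would fix the basis. Writing $a_k^\dagger = a^\dagger(u_k)$ and $a_k = a(u_k)$, the Fock number states
\begin{equation*}
    |n_1,\dots,n_K\rangle \;=\; \prod_{k=1}^{K} \frac{(a_k^\dagger)^{n_k}}{\sqrt{n_k!}} \,|0\rangle, \qquad (n_1,\dots,n_K)\in\mathbb{N}^K,
\end{equation*}
form an orthonormal basis of $\mathfrak{F}(\mathfrak{K})$. For any $u=\sum_k \alpha_k u_k\in\mathfrak{K}$ with $\alpha_k=\langle u_k,u\rangle$, the CCR and $a_j^\dagger a_k^\dagger=a_k^\dagger a_j^\dagger$ give $e^{a^\dagger(u)}=\prod_k e^{\alpha_k a_k^\dagger}$, so expanding each exponential yields
\begin{equation*}
    \xi(u) \;=\; e^{-\|u\|^2/2}\,e^{a^\dagger(u)}|0\rangle \;=\; e^{-\|u\|^2/2}\sum_{(n_1,\dots,n_K)\in\mathbb{N}^K} \prod_{k=1}^{K} \frac{\alpha_k^{n_k}}{\sqrt{n_k!}}\,|n_1,\dots,n_K\rangle.
\end{equation*}

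Next I would compute the matrix element of the left-hand side of \eqref{eq: cs partition unity} between two such basis vectors. Taking $\mathrm{d}u=\prod_k \mathrm{d}\alpha_k$ to be Lebesgue measure on $\mathfrak{K}\simeq\mathbb{C}^K$ and using polar coordinates $\alpha_k=r_k e^{i\theta_k}$, one is reduced to the standard one-dimensional integrals
\begin{equation*}
    \int_{\mathbb{C}} e^{-|\alpha|^2}\,\alpha^{n}\,\overline{\alpha}^{m}\,\mathrm{d}\alpha
    \;=\; \delta_{n,m}\,\pi\, n!,
\end{equation*}
obtained from $\int_0^{2\pi}e^{i(n-m)\theta}\mathrm{d}\theta=2\pi\delta_{n,m}$ and $\int_0^\infty r^{2n+1}e^{-r^2}\mathrm{d}r=n!/2$. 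Multiplying over $k$ produces the factor $\pi^K\prod_k n_k!\,\delta_{n_k,m_k}$, which exactly cancels the prefactor $\pi^{-K}$ and the normalization $\prod_k (n_k!m_k!)^{-1/2}$ coming from the coherent-state expansion, leaving
\begin{equation*}
    \pi^{-K}\!\int_{\mathfrak{K}} \langle n_1,\dots,n_K|\xi(u)\rangle\langle\xi(u)|m_1,\dots,m_K\rangle\,\mathrm{d}u
    \;=\; \prod_{k=1}^{K}\delta_{n_k,m_k}
    \;=\; \langle n_1,\dots,n_K|\mathbf{1}_{\mathfrak{F}(\mathfrak{K})}|m_1,\dots,m_K\rangle.
\end{equation*}

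The only subtlety is justifying the exchange of sum and integral when pairing the two coherent-state expansions against the basis vectors, but since $K<\infty$ and the Gaussian weight $e^{-\|u\|^2}$ ensures absolute convergence of the resulting double series (each monomial $|\alpha_k|^{n_k+m_k}e^{-|\alpha_k|^2}$ is integrable with bounded $\ell^1$-norm of the coefficients), Fubini applies and the exchange is legitimate. This is the only mild obstacle; everything else is an explicit Gaussian computation. Density of the span of number states then concludes.
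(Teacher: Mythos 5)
Your proof is correct, and it is the standard calculation. The paper itself does not prove this lemma; it simply invokes it as a classical fact (citing the review \cite{Rougerie-EMS}), so there is no paper proof to compare against. Your argument — expand the coherent state in the occupation number basis via $e^{a^\dagger(u)}=\prod_k e^{\alpha_k a_k^\dagger}$, reduce the matrix element $\pi^{-K}\int\langle n|\xi(u)\rangle\langle\xi(u)|m\rangle\,\mathrm{d}u$ to a product of one-dimensional Gaussian integrals $\int_{\mathbb{C}}e^{-|\alpha|^2}\alpha^n\bar\alpha^m\,\mathrm{d}\alpha=\pi\,n!\,\delta_{n,m}$, and observe the cancellation against the $\sqrt{n_k!m_k!}$ normalizations — is exactly the textbook derivation of the overcompleteness relation for Glauber coherent states. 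The Fubini/density argument you flag at the end is the right way to close the computation, and it is indeed unproblematic in finite mode number $K<\infty$. One could alternatively observe that the operator $T:=\pi^{-K}\int\ket{\xi(u)}\bra{\xi(u)}\,\mathrm{d}u$ is bounded, positive, and commutes with all Weyl operators $W(v)$ (by the invariance of Lebesgue measure under translation $u\mapsto u+v$ and the phase cancellation between $\ket{\cdot}\bra{\cdot}$), hence by irreducibility of the Weyl representation $T$ is a scalar, which is then fixed to $1$ by evaluating $\braket{0|T|0}=\pi^{-K}\int e^{-\|u\|^2}\,\mathrm{d}u=1$; that variant avoids any series manipulations, but your direct computation is equally legitimate and more self-contained.
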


We can now state the Berezin-Lieb inequalities. We recall that the set of states on any Hilbert space $\mathfrak{X}$ is defined by
\begin{equation}
    \mathcal{S}(\mathfrak{X})= \left\{ \Gamma = \Gamma^* \geq 0 \ \vert \ \text{Tr}_{\mathfrak{X}} \left[ \Gamma \right] = 1 \right\} \ . \\
\end{equation}

\begin{lemma}[\textbf{First Berezin-Lieb inequality}]\mbox{}\\
    Let $\mathfrak{K}$ be a finite dimensional Hilbert space of dimension $K$. Let $\Gamma \in \mathcal{S}(\mathfrak{F}(\mathfrak{K}))$. Define its \textnormal{lower symbol} $\nu^{\textnormal{low}}$ as
    \begin{equation} \label{eq:lower symbol BL1}
        \nu^{\textnormal{low}}(u) =  \pi^{-K} \ps{\xi(u)}{\Gamma \xi(u)}_{\mathfrak{F}(\mathfrak{K})} \ . \\
    \end{equation}
    Then, for every convex function $f:\mathbb{R}^+ \rightarrow \mathbb{R}$, we have
    \begin{equation} \label{eq:berezinlieb 1}
        \tr{f(\Gamma)} \geq \pi^{-K} \int_{\mathfrak{K}}f \left( \frac{\nu^{\textnormal{low}}(u)}{\pi^{-K}} \right) \text{d}u \ . \\
    \end{equation}
\end{lemma}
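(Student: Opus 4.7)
The plan is to follow the standard Berezin--Lieb argument: diagonalize $\Gamma$, rewrite the lower symbol as a convex combination with weights given by Born-style probabilities against coherent states, apply Jensen's inequality pointwise in $u$, and finally integrate using the coherent-state resolution of identity~\eqref{eq: cs partition unity}.

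More precisely, since $\Gamma \in \mathcal{S}(\mathfrak{F}(\mathfrak{K}))$ is a trace-class positive operator, we may spectrally decompose
\begin{equation*}
    \Gamma = \sum_{n \geq 0} \gamma_n \ket{\phi_n}\bra{\phi_n}, \qquad \gamma_n \geq 0, \ \sum_n \gamma_n = 1,
\end{equation*}
with $\{\phi_n\}_{n \geq 0}$ an orthonormal basis of $\mathfrak{F}(\mathfrak{K})$. Then by functional calculus $\mathrm{Tr}[f(\Gamma)] = \sum_n f(\gamma_n)$, and the lower symbol takes the form
\begin{equation*}
    \frac{\nu^{\mathrm{low}}(u)}{\pi^{-K}} = \ps{\xi(u)}{\Gamma \xi(u)} = \sum_{n \geq 0} \gamma_n |\ps{\xi(u)}{\phi_n}|^2.
\end{equation*}
Since $\|\xi(u)\| = 1$ (this is immediate from the series expansion in Definition~\ref{def:coh}), Parseval yields $\sum_{n} |\ps{\xi(u)}{\phi_n}|^2 = 1$ for every $u$, so that for each fixed $u$ the numbers $p_n(u) := |\ps{\xi(u)}{\phi_n}|^2$ form a probability distribution over $n$.

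The key step is Jensen's inequality: since $f$ is convex on $\mathbb{R}^+$ and all $\gamma_n \geq 0$, we have pointwise in $u$
\begin{equation*}
    f\!\left(\frac{\nu^{\mathrm{low}}(u)}{\pi^{-K}}\right) = f\!\left(\sum_{n} p_n(u) \gamma_n\right) \leq \sum_{n} p_n(u) f(\gamma_n).
\end{equation*}
Integrating against $\pi^{-K} \textnormal{d}u$ and exchanging sum and integral (justified by the positivity of the summands via Tonelli, provided $f \geq 0$; the general case is handled by writing $f = f_+ - f_-$ and using that $\mathrm{Tr}[f_\pm(\Gamma)] < \infty$ which follows by splitting off the constant and linear parts of $f$ using $\sum_n \gamma_n = 1 < \infty$), and finally invoking \eqref{eq: cs partition unity} in the form
\begin{equation*}
    \pi^{-K}\int_{\mathfrak{K}} p_n(u)\,\textnormal{d}u = \ps{\phi_n}{\phi_n} = 1,
\end{equation*}
yields $\pi^{-K}\int f(\nu^{\mathrm{low}}(u)/\pi^{-K}) \textnormal{d}u \leq \sum_n f(\gamma_n) = \mathrm{Tr}[f(\Gamma)]$, as desired.

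The only genuine subtlety, and thus the main (mild) obstacle, is that $\mathfrak{F}(\mathfrak{K})$ is infinite dimensional even though $\mathfrak{K}$ is not, so the sum in the spectral decomposition runs over infinitely many $n$. This forces one to justify the Fubini--Tonelli exchange of sum and integral for possibly sign-changing $f$; the standard trick is to first treat the canonical convex test functions $f(x)=x\log x$ (decomposing into positive/negative parts and using finiteness of the von~Neumann entropy of a state on Fock space, or reducing to $f \geq 0$ by subtracting an affine function, which is harmless because $\sum_n \gamma_n$ and $\sum_n 1 \cdot p_n(u)$ are both controlled), so the general case follows by density/approximation arguments for convex $f$.
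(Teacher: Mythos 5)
Your proof is correct and takes the same route the paper points to: the paper's proof of this lemma is only a reference to Simon's classical-limit argument (\cite{Simon-80}), which is precisely the spectral decomposition plus pointwise Jensen inequality plus coherent-state resolution of identity that you carry out. The one place where you should be more careful is the parenthetical fix of the Fubini step. Your claim that one can ``reduce to $f\geq 0$ by subtracting an affine function $\ell(x)=ax+b$, harmless because $\sum_n\gamma_n$ and $\sum_n 1\cdot p_n(u)$ are controlled'' does not go through when $b\neq 0$: the Fock space $\mathfrak{F}(\mathfrak{K})$ is infinite-dimensional, so $\operatorname{Tr}[\ell(\Gamma)] = a + b\sum_n 1 = \pm\infty$, and $\mathfrak{K}\simeq\mathbb{C}^K$ has infinite Lebesgue measure so $\pi^{-K}\int_{\mathfrak{K}}\ell(\nu^{\mathrm{low}}/\pi^{-K})\,\textnormal{d}u = a + b\,\pi^{-K}\int_{\mathfrak{K}}\textnormal{d}u = \pm\infty$; the two infinities do not cancel in a licit way. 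In other words, the affine reduction is only clean when $b=f(0)=0$, so that $\ell$ is linear and both corrections are the finite quantity $a$. For the actual test function used in the paper, $f(x)=x\log x$ with $f(0)=0$, the simplest observation is that $f\leq 0$ on $[0,1]$ and every $\gamma_n\in[0,1]$, so the terms $p_n(u)\,f(\gamma_n)$ all have the same sign and Tonelli for nonpositive integrands applies directly, no subtraction needed. This is a local repair; the Jensen core of your argument is right and matches the reference the paper cites.
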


\begin{proof}
 Mimick that of~\cite{Simon-80} along the lines of~\cite[Proof of Lemma~B.3]{Rougerie-LMU}.     
\end{proof}

\begin{lemma}[\textbf{Second Berezin-Lieb inequality}]\mbox{}\\
    Let $\mathfrak{K}$ be a finite dimensional Hilbert space of dimension $K$. Let $\Gamma \in \mathcal{S}(\mathfrak{F}(\mathfrak{K}))$ having an \textnormal{upper symbol} $\nu^{\textnormal{up}} \geq 0$, i.e
    \begin{equation} \label{eq:upper symbol BL1}
        \Gamma = \int_{\mathfrak{K}} \nu^{\textnormal{up}}(u) \ket{\xi(u)} \bra{\xi(u)} \textnormal{d}u \ . \\
    \end{equation}
Then, for every convex function $f:\mathbb{R}^+ \rightarrow \mathbb{R}$, we have
    \begin{equation} \label{eq:Berezinlieb 2}
        \tr{f(\Gamma)} \leq \pi^{-K} \int_{\mathfrak{K}}f \left( \frac{\nu^{\textnormal{up}}(u)}{\pi^{-K}} \right) \text{d}u \ . \\
    \end{equation}
\end{lemma}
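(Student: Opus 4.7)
The plan is to follow the standard Simon-style argument from \cite{Simon-80}, adapted to the coherent-state partition of unity~\eqref{eq: cs partition unity} instead of a Hartree-state one. The essential point is that $\pi^{-K}\,\textnormal{d}u$ together with the rank-one operators $\ket{\xi(u)}\bra{\xi(u)}$ provides a normalized positive operator-valued measure, and Jensen's inequality can be applied eigenvector by eigenvector.

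First I would diagonalize $\Gamma$. Since $\Gamma \in \mathcal{S}(\mathfrak{F}(\mathfrak{K}))$ is self-adjoint, trace-class and positive, there exists an orthonormal family $\{\phi_i\}_{i\geq 1}$ of $\mathfrak{F}(\mathfrak{K})$ and nonnegative eigenvalues $\gamma_i$ with $\sum_i \gamma_i = 1$ such that
\begin{equation*}
    \Gamma = \sum_{i \geq 1} \gamma_i \ket{\phi_i}\bra{\phi_i}, \qquad \textnormal{Tr}\bigl[f(\Gamma)\bigr] = \sum_{i \geq 1} f(\gamma_i).
\end{equation*}
Testing the upper symbol representation~\eqref{eq:upper symbol BL1} against $\phi_i$ yields
\begin{equation*}
    \gamma_i = \ps{\phi_i}{\Gamma \phi_i} = \int_{\mathfrak{K}} \nu^{\textnormal{up}}(u)\,|\ps{\phi_i}{\xi(u)}|^2\,\textnormal{d}u = \int_{\mathfrak{K}} \frac{\nu^{\textnormal{up}}(u)}{\pi^{-K}}\,\textnormal{d}\mathbb{P}_i(u),
\end{equation*}
where $\textnormal{d}\mathbb{P}_i(u) := \pi^{-K}\,|\ps{\phi_i}{\xi(u)}|^2\,\textnormal{d}u$.

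The second step is to observe that each $\mathbb{P}_i$ is a genuine probability measure on $\mathfrak{K}$. Indeed, testing the resolution of the identity~\eqref{eq: cs partition unity} against $\phi_i$ gives $\int_{\mathfrak{K}} \textnormal{d}\mathbb{P}_i(u) = \|\phi_i\|^2 = 1$. Applying Jensen's inequality to the convex function $f$ thus yields, for every $i$,
\begin{equation*}
    f(\gamma_i) = f\!\left(\int_{\mathfrak{K}} \frac{\nu^{\textnormal{up}}(u)}{\pi^{-K}}\,\textnormal{d}\mathbb{P}_i(u)\right) \leq \int_{\mathfrak{K}} f\!\left(\frac{\nu^{\textnormal{up}}(u)}{\pi^{-K}}\right)\textnormal{d}\mathbb{P}_i(u).
\end{equation*}

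The third step is a Fubini-type summation. Summing the above bound over $i$ and exchanging sum and integral (all terms being nonnegative once $f$ is replaced by $f - \min f$ if necessary, or directly if $f \geq 0$), I would use the Parseval/completeness identity $\sum_{i\geq 1} |\ps{\phi_i}{\xi(u)}|^2 = \|\xi(u)\|_{\mathfrak{F}(\mathfrak{K})}^2 = 1$, valid because $\xi(u)$ is a unit vector (see Definition~\ref{def:coh}). This produces
\begin{equation*}
    \textnormal{Tr}\bigl[f(\Gamma)\bigr] = \sum_{i \geq 1} f(\gamma_i) \leq \pi^{-K}\int_{\mathfrak{K}} f\!\left(\frac{\nu^{\textnormal{up}}(u)}{\pi^{-K}}\right)\textnormal{d}u,
\end{equation*}
which is precisely~\eqref{eq:Berezinlieb 2}. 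The only mildly delicate point is justifying the interchange of summation and integration, which is the real ``main obstacle'': for $f$ bounded below on $\mathbb{R}^+$ this is immediate by monotone convergence after shifting $f$, and for the application with $f(x)=x\log x$ used in Lemma~\ref{lem:finite dim semi} one can split $f = f_+ - f_-$ with $f_-$ bounded and handle each piece; alternatively, truncate $\nu^{\textnormal{up}}$ and pass to the limit. No additional property of the coherent states beyond~\eqref{eq: cs partition unity} and $\|\xi(u)\|=1$ enters the argument.
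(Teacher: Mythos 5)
Your proof is correct and follows exactly the standard Simon--Lieb argument (spectral decomposition of $\Gamma$, Jensen's inequality against the probability measures $\mathrm{d}\mathbb{P}_i(u)=\pi^{-K}|\langle\phi_i,\xi(u)\rangle|^2\,\mathrm{d}u$, then Parseval and the fact that $\|\xi(u)\|=1$), which is precisely what the paper points to by citing Simon~\cite{Simon-80} and the proof of Lemma~B.4 of~\cite{Rougerie-LMU}. The only point worth making explicit is that one should take the $\{\phi_i\}$ to be a complete orthonormal eigenbasis (including the kernel of $\Gamma$) so that Parseval applies, and impose $f(0)=0$ (as in the application $f(x)=x\log x$) so that $\textnormal{Tr}[f(\Gamma)]$ and the interchange of sum and integral are both well defined.
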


\begin{proof}
 Mimick that of~\cite{Simon-80} along the lines of~\cite[Proof of Lemma~B.4]{Rougerie-LMU}.     
\end{proof}

\section{Regularization of the $\Phi^4_2$ measure by convolution}

We have used that the finite dimensionally truncated partition function of the $\Phi^4_2$ measure can be uniformly approximated by that of the corresponding Hartree measure, with smeared non-linearity. This follows from results of~\cite{FroKnoSchSoh-22}, as we explain below, using their notation for simplicity. Let us define
\begin{equation} \label{eqdef W V in FKSS}
    \begin{split}
        W^\varepsilon(u) &= \frac{1}{2}\iint_{\mathbb{T}^2 \times \mathbb{T}^2} w^\varepsilon(x-y) \wick{|u(x)|^2}\wick{|u(y)|^2} \textnormal{d}x\textnormal{d}y - \tau^\varepsilon \int_{\mathbb{T}^2}\wick{|u(x)|^2} \textnormal{d}x - E^\varepsilon \\
        V^\varepsilon(u) & =  \frac{1}{2}\int_{\mathbb{T}^2} w^\varepsilon(x-y) \wick{|u(x)|^2|u(y)|^2} \textnormal{d}x \textnormal{d}y \\
        V(u) & = \frac{1}{2}\int_{\mathbb{T}^2}  \wick{|u(x)|^4} \textnormal{d}x \ . \\
    \end{split}
\end{equation}
These are understood as $L^2(\textnormal{d}\mu_0)$ limits of their truncated counterparts $W^\varepsilon_K$, $V^\varepsilon_K$ and $V_K(u)$, that are well-defined on $P\mathfrak{H}$. A crucial result of~\cite{FroKnoSchSoh-22} is that $W^\varepsilon,V^\varepsilon \to V$ when $\varepsilon \to 0$. In the main text, we have commuted the $\varepsilon \to 0$ and $K\to \infty$ limit, a step for which we now provide justifications. \\

The following is obtained by a straightforward adaptation of the proof of \cite[Lemma 4.5]{FroKnoSchSoh-22}:

\begin{lemma}[\textbf{$W^\varepsilon$ versus $V^\varepsilon$}] \label{lemmacontrol of V W K eps}\mbox{}\\
    Let $\varepsilon >0$. Consider $W^\varepsilon_K$ and $V^\varepsilon_K$ the truncated counterparts of $W^\varepsilon$ and $V^\varepsilon$, as defined above in \eqref{eqdef W V in FKSS}. Then, for all $0 < K \leq + \infty$, there exists a function $f$ such that, uniformly in $K$, we have
    \begin{equation}
        \left \lVert V^\varepsilon_K - W^\varepsilon_K \right \lVert_{L^{2}(\textnormal{d}\mu_0)} \leq f(\varepsilon) \underset{\varepsilon \rightarrow 0^+}{\longrightarrow 0} \ . \\
    \end{equation}
\end{lemma}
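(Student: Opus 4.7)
The plan is to derive an explicit Wick ordering identity for $V^\varepsilon_K - W^\varepsilon_K$ placing the difference in the second Wiener chaos over $\mu_0$, to compute its squared $L^2(\textnormal{d}\mu_0)$ norm via the Isserlis--Wick formula, and then to dominate the resulting deterministic integral by its $K\to\infty$ counterpart, which by~\cite[Lemma~4.5]{FroKnoSchSoh-22} decays as $\varepsilon\to 0$.

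First, apply the Wick ordering formula of~\cite[Appendix~A]{FroKnoSchSoh-22} to the quartic monomial $|P_Ku(x)|^2|P_Ku(y)|^2$, taken with respect to the truncated covariance
$$G_K(x-y) = \left\langle \overline{P_Ku(x)} P_Ku(y) \right\rangle_{\mu_0} = \sum_{j=1}^K \lambda_j^{-1} u_j(x) \overline{u_j(y)},$$
and integrate against $\tfrac12 w^\varepsilon(x-y)$; after cancellations with the linear counterterm $-\tau^\varepsilon\int\wick{|P_Ku|^2}$ and the constant $-E^\varepsilon$ present in $W^\varepsilon_K$, one obtains
$$V^\varepsilon_K - W^\varepsilon_K = -I^\varepsilon_K + R^\varepsilon_K.$$
Here $I^\varepsilon_K$ is the finite-mode analogue of~\eqref{eq:neglect term} (with $u$ replaced by $P_Ku$ and $G$ by $G_K$), while $R^\varepsilon_K$ gathers the residuals proportional to $G-G_K$ that appear because $\tau^\varepsilon$ and $E^\varepsilon$ are defined with the full Green kernel $G$ rather than $G_K$.

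Second, both $I^\varepsilon_K$ and $R^\varepsilon_K$ belong to the second Wiener chaos over $\mu_0$, so their squared $L^2$ norms reduce to deterministic quartic integrals. Using the Isserlis--Wick identities
$$\mathbb{E}_{\mu_0}\!\left[\wick{\overline{P_Ku(x)}P_Ku(y)}\,\wick{\overline{P_Ku(x')}P_Ku(y')}\right] = G_K(x-y')\,G_K(x'-y)$$
and
$$\mathbb{E}_{\mu_0}\!\left[\wick{|P_Ku(x)|^2}\,\wick{|P_Ku(x')|^2}\right] = |G_K(x-x')|^2,$$
together with the mixed analogues, one rewrites $\|I^\varepsilon_K\|_{L^2(\textnormal{d}\mu_0)}^2+\|R^\varepsilon_K\|_{L^2(\textnormal{d}\mu_0)}^2$ as a finite sum of deterministic integrals of the schematic form $\iiiint w^\varepsilon\cdot w^\varepsilon\cdot\prod G_K$. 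On the Fourier side, the pointwise bounds $0\leq\widehat{G_K}(k)\leq\widehat{G}(k)=(|k|^2+1)^{-1}$ and $\widehat{w^\varepsilon}(k)=\widehat w(\varepsilon k)\geq 0$ allow one to replace every $G_K$ by $G$ without loss, yielding a bound independent of $K$.

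The resulting $K$-independent majorant coincides with the quantity controlled in~\cite[Lemma~4.5]{FroKnoSchSoh-22}, shown there to decay polynomially in $\varepsilon$ up to logarithmic factors; taking $f(\varepsilon)$ to be (a multiple of) that bound yields the lemma. The only delicate point is bookkeeping: one must track every Wick contraction produced by the expansion and verify that all resulting terms support the Fourier-side domination by $\widehat G$, uniformly in $K$. Once this is carried out, the quantitative $\varepsilon$-decay estimate of~\cite[Lemma~4.5]{FroKnoSchSoh-22} applies without modification.
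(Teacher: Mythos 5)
Your overall route (expand the quartic Wick monomial over the truncated covariance $G_K$, isolate the finite-mode analogue $I^\varepsilon_K$ of \eqref{eq:neglect term}, compute $L^2(\textnormal{d}\mu_0)$ norms of the second-chaos pieces via Isserlis, and bound the resulting deterministic integrals on the Fourier side before invoking the decay estimates of~\cite{FroKnoSchSoh-22}) is exactly the ``straightforward adaptation of~\cite[Lemma~4.5]{FroKnoSchSoh-22}'' that the paper alludes to, and your Isserlis identities are correct.

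The gap is in the treatment of $R^\varepsilon_K$. The Fourier-side domination $0\le\widehat{G_K}(k)\le\widehat G(k)$ together with $\widehat{w^\varepsilon}\ge 0$ does replace every \emph{factor of $G_K$} by $G$ and hence bounds $\|I^\varepsilon_K\|_{L^2}$ uniformly in $K$. But $R^\varepsilon_K$ is, by your own description, proportional to $G-G_K$: concretely, since $W^\varepsilon_K$ in \eqref{eq: regularized renormalized Int} keeps the \emph{full} counterterms $\tau^\varepsilon$, $E^\varepsilon$ while the Wick expansion of $\wick{|P_Ku(x)|^2|P_Ku(y)|^2}$ produces the truncated ones $\tau^\varepsilon_K=\int w^\varepsilon G_K$ and $E^\varepsilon_K \propto \int w^\varepsilon G_K^2$, one finds
\begin{equation*}
R^\varepsilon_K = (\tau^\varepsilon-\tau^\varepsilon_K)\int_{\mathbb{T}^2}\wick{|P_Ku(x)|^2}\,\textnormal{d}x \;+\; \bigl(E^\varepsilon - E^\varepsilon_K\bigr),
\end{equation*}
which is second chaos \emph{plus} a zeroth-chaos constant, not purely second chaos as you claim. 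Neither piece is controlled by replacing $G_K$ by $G$: that substitution would make $G-G_K$ vanish, giving an upper bound of $0$, which is absurd. And the constant alone obstructs a genuinely $K$-uniform bound, since $E^\varepsilon-E^\varepsilon_K\ge 0$ and tends to $E^\varepsilon\sim|\log\varepsilon|^2$ when $K$ stays bounded, so $\|V^\varepsilon_K-W^\varepsilon_K\|_{L^2(\textnormal{d}\mu_0)}$ actually \emph{diverges} as $\varepsilon\to 0$ for fixed small $K$. What you can prove by your method is a $K$-uniform bound on $\|I^\varepsilon_K\|_{L^2}\lesssim f(\varepsilon)$ together with a separate estimate on $R^\varepsilon_K$ that decays as $K\to\infty$ (uniformly in $\varepsilon$ one only gets an $|\log\varepsilon|$-type growth); in the application (Proposition~\ref{propConvergence regularized renormalized partition function}) $K\simeq\Lambda=\varepsilon^{-\nu}\to\infty$, so both pieces are small. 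You should either restrict the lemma to $K$ large enough (depending on $\varepsilon$) or redefine the truncated $W^\varepsilon_K$ with the $K$-truncated counterterms $\tau^\varepsilon_K$, $E^\varepsilon_K$ — in which case $R^\varepsilon_K\equiv 0$ and your Fourier-domination argument closes the proof — but in its present form the step ``yielding a bound independent of $K$'' does not hold.
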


The convergence result for this section is then the following
\begin{proposition}[\textbf{Joint $K\to \infty$ and $\varepsilon \to 0$ limit}]\label{propConvergence regularized renormalized partition function}\mbox{}\\
    Let $\varepsilon > 0$  and $\Lambda = \varepsilon^{-\nu}$ for some $\nu > 0$. Consider the Laplacian $h = - \Delta+1$ acting on $\mathfrak{H} = L^2_\textnormal{s}(\mathbb{T}^2)$ and the orthogonal projector on low modes $P = \mathbf{1}_{h\leq \Lambda}$. Denote $K = \tr{P}$. Define the truncated renormalized interaction
    \begin{equation}
        W^\varepsilon_K(u) = \frac{1}{2}\iint_{\mathbb{T}^2 \times \mathbb{T}^2} w^\varepsilon(x-y) \wick{|Pu(x)|^2}\wick{|Pu(y)|^2} \textnormal{d}x\textnormal{d}y - \tau^\varepsilon \int_{\mathbb{T}^2}\wick{|Pu(x)|^2} \textnormal{d}x - E^\varepsilon
    \end{equation}
    and the cylindrical projection of the Gaussian measure $\mu_0$ on $P\mathfrak{H}$
    \begin{equation}
        \textnormal{d}\mu_{0,K}(u) = \frac{1}{\mathcal{Z}_{0,K}}e^{-\ps{u}{hu}} \textnormal{d}u \ . \\
    \end{equation}
    We have
    \begin{equation} \label{eqconvergence Phi42 in prop}
        \int_{P\mathfrak{H}}e^{-W^\varepsilon_K(u)}\textnormal{d}\mu_{0,K}(u) \underset{\varepsilon \rightarrow 0^+}{\longrightarrow}  \int e^{-V(u)}\textnormal{d}\mu_{0}(u) \ . \\
    \end{equation}
\end{proposition}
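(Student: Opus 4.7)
The plan is to lift the integral from $P\gH$ to the whole space against $\mu_0$, reduce the statement to an $L^2(\textnormal{d}\mu_0)$ convergence of the integrands, and then transfer it to the partition functions via uniform exponential integrability. Since $W^\varepsilon_K$ depends on $u$ only through $Pu$ and $\mu_{0,K}$ is precisely the cylindrical projection of $\mu_0$ onto $P\gH$, one has
\begin{equation*}
\int_{P\gH} e^{-W^\varepsilon_K(u)} \textnormal{d}\mu_{0,K}(u) = \int e^{-W^\varepsilon_K(u)} \textnormal{d}\mu_0(u),
\end{equation*}
so the task becomes showing that this last quantity tends to $\int e^{-V}\textnormal{d}\mu_0$ as $\varepsilon \to 0^+$ along the prescribed scaling $\Lambda = \varepsilon^{-\nu}$, $K = \tr{\mathbf{1}_{h\leq \Lambda}} \to \infty$.

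From the elementary inequality $|e^{-a}-e^{-b}| \leq |a-b|(e^{-a}+e^{-b})$ and the Cauchy--Schwarz inequality,
\begin{equation*}
\left| \int e^{-W^\varepsilon_K} \textnormal{d}\mu_0 - \int e^{-V} \textnormal{d}\mu_0 \right| \leq \left\| W^\varepsilon_K - V \right\|_{L^2(\textnormal{d}\mu_0)} \left( \left\| e^{-W^\varepsilon_K} \right\|_{L^2(\textnormal{d}\mu_0)} + \left\| e^{-V} \right\|_{L^2(\textnormal{d}\mu_0)} \right).
\end{equation*}
The exponential factors on the right are bounded uniformly in $\varepsilon$ and $K$ by~\cite[Proposition~4.3]{FroKnoSchSoh-22} (together with the analogous standard estimate for the pure $\Phi^4_2$ density $e^{-V}$), so everything reduces to proving $\|W^\varepsilon_K - V\|_{L^2(\textnormal{d}\mu_0)} \to 0$ along the joint limit.

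For this $L^2$-convergence, I would interpose $V^\varepsilon_K$ and $V^\varepsilon$ through the triangle inequality:
\begin{equation*}
\|W^\varepsilon_K - V\|_{L^2} \leq \|W^\varepsilon_K - V^\varepsilon_K\|_{L^2} + \|V^\varepsilon_K - V^\varepsilon\|_{L^2} + \|V^\varepsilon - V\|_{L^2}.
\end{equation*}
The first term is bounded by $f(\varepsilon)\to 0$ uniformly in $K$ by Lemma~\ref{lemmacontrol of V W K eps}. The third term tends to zero by~\eqref{eqI epsilon goes to zero}--\eqref{eqlimit epsilon classical interaction}, which encode the strong $L^2(\textnormal{d}\mu_0)$-convergence of the smeared Wick quartic to its local counterpart, established in~\cite{FroKnoSchSoh-22}. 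The middle term is, for each fixed $\varepsilon$, the truncation defect in the very definition of $V^\varepsilon$ as the $L^2(\textnormal{d}\mu_0)$-limit of its projected version; a direct Wick-product expansion plus hypercontractivity, in the spirit of~\cite[Lemma~5.3]{LewNamRou-20} or~\cite[Section~4]{FroKnoSchSoh-22}, should yield a quantitative bound of the form $\|V^\varepsilon_K - V^\varepsilon\|_{L^2} \lesssim \varepsilon^{-A}\Lambda^{-\beta+}$ for some explicit $A,\beta>0$ depending only on the Schatten exponents of $h^{-s}$. With $\Lambda = \varepsilon^{-\nu}$ this becomes $\varepsilon^{\nu \beta - A +}$, which is $o_\varepsilon(1)$ provided $\nu$ is taken large enough, as is allowed by the hypotheses.

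The main obstacle is thus the middle term: obtaining a quantitative rate for $V^\varepsilon_K \to V^\varepsilon$ in $L^2(\textnormal{d}\mu_0)$ with sufficiently mild $\varepsilon$-dependence to beat the diverging factors $\varepsilon^{-A}$ that arise from $w^\varepsilon$ and the Wick counterterms $\tau^\varepsilon, E^\varepsilon$. Once this quantitative version is pinned down, the three-term splitting combined with the uniform exponential integrability provided by FKSS completes the argument.
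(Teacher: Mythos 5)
Your overall strategy matches the paper's: lift to $\int e^{-W^\varepsilon_K}\,\textnormal{d}\mu_0$, linearize the difference of exponentials with uniform exponential integrability from~\cite[Proposition~4.3]{FroKnoSchSoh-22}, and split $\|W^\varepsilon_K - V\|_{L^2(\textnormal{d}\mu_0)}$ via $V^\varepsilon_K$ and $V^\varepsilon$. Your direct Cauchy--Schwarz step is actually a little cleaner than the paper's Hölder-plus-hypercontractivity route; the decompositions are otherwise the same.

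The genuine gap is the one you flagged: the quantitative control of $\|V^\varepsilon_K-V^\varepsilon\|_{L^2(\textnormal{d}\mu_0)}$. You predict a bound of the form $\varepsilon^{-A}\Lambda^{-\beta+}$ and then propose to win by choosing $\nu$ large. This is the wrong picture. The key point, supplied by~\cite[Lemma~4.5]{FroKnoSchSoh-22} (cited as~\eqref{eqcauchy control on VKvarepsilon} in the paper), is that
\begin{equation*}
\left\lVert V^\varepsilon_K - V^\varepsilon_L \right\rVert_{L^2(\textnormal{d}\mu_0)} \lesssim_\delta K^{-1+\delta}
\qquad\text{for all } 0<K\leq L\leq +\infty,
\end{equation*}
\emph{uniformly in $\varepsilon$}. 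There is no $\varepsilon^{-A}$ degradation: since $|\widehat{w^\varepsilon}(k)|=|\widehat{w}(\varepsilon k)|\leq \widehat{w}(0)=1$ uniformly, the $L^2$ norm of the Wick quartic tail is controlled purely by the tail of the spectrum of $h$ through $K=\tr{P}$, not by any power of $\varepsilon^{-1}$. (Note also that $\tau^\varepsilon$ and $E^\varepsilon$ appear only in $W^\varepsilon_K$, and they cancel exactly in the step $W^\varepsilon_K-V^\varepsilon_K$ handled by Lemma~\ref{lemmacontrol of V W K eps}, so they never contaminate the $V$-comparisons.) As a consequence, the middle term vanishes for \emph{any} $\nu>0$ once $K\to\infty$, and the conclusion does not require $\nu$ large — which is important, since when converted to the variables of Theorem~\ref{thmConvergence of the relative free-energy} the effective exponent is $\nu/\eta$, bounded above by $1/(3\eta)$. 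If your predicted $\varepsilon^{-A}\Lambda^{-\beta+}$ behaviour were the truth, the Proposition would carry a hidden constraint on $\nu$ that the paper's statement and downstream use do not impose. So your plan would succeed only after replacing your speculative estimate by the actual $\varepsilon$-uniform bound from~\cite{FroKnoSchSoh-22}.
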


\begin{proof}
    By definition of $\mu_{0,K}$, we have
    \begin{equation}
        \begin{split}
            \left \lvert \int_{P\mathfrak{H}}e^{-W^\varepsilon_K(u)}\textnormal{d}\mu_{0,K}(u) -  \int e^{-V(u)}\textnormal{d}\mu_{0}(u) \right \lvert &= \left \lvert \int \left( e^{-W^\varepsilon_K(u)} -  e^{-V(u)} \right) \textnormal{d}\mu_0(u) \right \lvert \\
            & \leq \left \lVert e^{-W^\varepsilon_K} -  e^{-V} \right \lVert_{L^1(\textnormal{d}\mu_0)} \ . \\
        \end{split}
    \end{equation}
    Since $\mu_0$ is a probability measure, we can write, using Hölder's inequality
    \begin{equation*}
        \begin{split}
            \left \lVert e^{-W^\varepsilon_K} -  e^{-V} \right \lVert_{L^1(\textnormal{d}\mu_0)} & \leq \left \lVert e^{-W^\varepsilon_K} -  e^{-V} \right \lVert_{L^2(\textnormal{d}\mu_0)} \\
            & \leq \int_0^1 \left \lVert (W^\varepsilon_K - V)e^{-t W^\varepsilon_K - (1-t)V} \right \lVert_{L^2(\textnormal{d}\mu_0)} \textnormal{d}t \\
            & \leq \left \lVert W^\varepsilon_K - V\right \lVert_{L^{2r}(\textnormal{d}\mu_0)} \int_0^1  \left \lVert e^{-t W^\varepsilon_K - (1-t)V} \right \lVert_{L^{2q}(\textnormal{d}\mu_0)} \textnormal{d}t \\
            & \leq \left \lVert W^\varepsilon_K - V\right \lVert_{L^{2r}(\textnormal{d}\mu_0)} \int_0^1  \left \lVert e^{- W^\varepsilon_K} \right \lVert^t_{L^{2q}(\textnormal{d}\mu_0)} \left \lVert e^{-V} \right \lVert^{1-t}_{L^{2q}(\textnormal{d}\mu_0)} \textnormal{d}t \\
            & \leq \left \lVert W^\varepsilon_K - V\right \lVert_{L^{2r}(\textnormal{d}\mu_0)} \int_0^1  C^t {\Tilde{C}}^{1-t} \textnormal{d}t \ . \\
        \end{split}
    \end{equation*}
    In the third and fourth lines, we used H\"older's inequality with respectively $1/r + 1/q = 1$ and 
    $$1/(1/t) + 1/(1/(1-t)) = 1 \ .$$ 
    In the fifth line, we used the fact that $\left \lVert e^{- W^\varepsilon} \right \lVert_{L^{p}(\textnormal{d}\mu_0)}$ is uniformly bounded in $\varepsilon$ for all $p \geq 1$ (see \cite[Lemma 4.6]{FroKnoSchSoh-22}), and so is $\left \lVert e^{- W_K^\varepsilon} \right \lVert_{L^{p}(\textnormal{d}\mu_0)}$. Next, we have from \eqref{eqint ren with extra term I}
    \begin{equation}
        \begin{split}
            W^\varepsilon & =  \frac{1}{2}\int_{\mathbb{T}^2} w^\varepsilon(x-y) \wick{|u(x)|^2|u(y)|^2} \textnormal{d}x \textnormal{d}y \\
            & \quad \quad + \iint_{\mathbb{T}^2 \times \mathbb{T}^2}w^\varepsilon(x-y)G(x-y) \left( \wick{\overline{u(x)}u(y)} - \wick{|u(x)|^2} \right)\textnormal{d}x \textnormal{d}y \ , \\
        \end{split}
    \end{equation}
    with a similar expression for $W^\varepsilon_K$. Hence, in view of \eqref{eqdef W V in FKSS}, we see that $W^\varepsilon_K - V$ lives in the fourth polynomial chaos (see \cite[Appendix~A]{FroKnoSchSoh-22}, or~\cite{Hairer-16} for a definition). Hence, using \cite[Lemma~1.4]{FroKnoSchSoh-22} gives 
    \begin{equation}
        \left \lVert W^\varepsilon_K - V\right \lVert_{L^{2r}(\textnormal{d}\mu_0)} \leq 4Cr^2 \left \lVert W^\varepsilon_K - V\right \lVert_{L^{2}(\textnormal{d}\mu_0)}
    \end{equation}
    for some universal constant $C > 0$. Let us now deal with the term $\left \lVert W^\varepsilon_K - V\right \lVert_{L^{2}(\textnormal{d}\mu_0)}$. Let $L > 0$ that we will choose later. By the triangle inequality, we have
    \begin{equation}
        \begin{split}
            \left \lVert W^\varepsilon_K - V\right \lVert_{L^{2}(\textnormal{d}\mu_0)} & \leq \left \lVert W^\varepsilon_K - V^\varepsilon_K\right \lVert_{L^{2}(\textnormal{d}\mu_0)} + \left \lVert V^\varepsilon_K - V\right \lVert_{L^{2}(\textnormal{d}\mu_0)} \\
            & \leq \left \lVert W^\varepsilon_K - V^\varepsilon_K\right \lVert_{L^{2}(\textnormal{d}\mu_0)} + \left \lVert V^\varepsilon_K - V^\varepsilon\right \lVert_{L^{2}(\textnormal{d}\mu_0)} + \left \lVert V^\varepsilon - V\right \lVert_{L^{2}(\textnormal{d}\mu_0)} \\
            & \leq \left \lVert W^\varepsilon_K - V^\varepsilon_K\right \lVert_{L^{2}(\textnormal{d}\mu_0)} + \left \lVert V^\varepsilon_K - V^\varepsilon_L\right \lVert_{L^{2}(\textnormal{d}\mu_0)} + \left \lVert V^\varepsilon_L - V^\varepsilon\right \lVert_{L^{2}(\textnormal{d}\mu_0)} + \left \lVert V^\varepsilon - V\right \lVert_{L^{2}(\textnormal{d}\mu_0)} \ . \\
        \end{split}
    \end{equation}
    The last two terms of the right hand side satisfy
    \begin{equation*}
        \begin{split}
            \left \lVert V^\varepsilon_L - V^\varepsilon\right \lVert_{L^{2}(\textnormal{d}\mu_0)} & \underset{L \rightarrow + \infty}{\longrightarrow} 0 \\
            \left \lVert V^\varepsilon - V\right \lVert_{L^{2}(\textnormal{d}\mu_0)} & \underset{\varepsilon \rightarrow 0}{\longrightarrow} 0 \ . \\
        \end{split}
    \end{equation*}
    by construction, and by \cite[Lemma 4.2]{FroKnoSchSoh-22}, respectively. Now, for the second term, using \cite[Lemma 4.5]{FroKnoSchSoh-22}, we have, for all $0 < K \leq L < + \infty$ and for any fixed $\delta > 0$, 
    \begin{equation} \label{eqcauchy control on VKvarepsilon}
        \left \lVert V^\varepsilon_K - V^\varepsilon_L \right \lVert_{L^{2}(\textnormal{d}\mu_0)} \lesssim_\delta K^{-1+\delta} \ . \\
    \end{equation}
    Now, 
    Taking $L > 0$ large enough, \eqref{eqcauchy control on VKvarepsilon} does hold for all fixed $K \leq L$. Thus, using Lemma \ref{lemmacontrol of V W K eps} we get the following estimate
    \begin{equation}
        \left \lVert e^{-W^\varepsilon_K} -  e^{-V} \right \lVert_{L^1(\textnormal{d}\mu_0)} \lesssim_\delta  f(\varepsilon) +  K^{-1+\delta} + \left \lVert V^\varepsilon_L - V^\varepsilon \right \lVert_{L^{2}(\textnormal{d}\mu_0)} + \left \lVert V^\varepsilon - V\right \lVert_{L^{2}(\textnormal{d}\mu_0)} \ . \\
    \end{equation}
    Taking $L$ to infinity in the above, we obtain
    \begin{equation}
         \left \lVert e^{-W^\varepsilon_K} -  e^{-V} \right \lVert_{L^1(\textnormal{d}\mu_0)} \lesssim_\delta  f(\varepsilon) +  K^{-1+\delta} + \left \lVert V^\varepsilon - V\right \lVert_{L^{2}(\textnormal{d}\mu_0)} \ . \\
    \end{equation}
    It now remains to take $\varepsilon$ to $0$, and use the facts that $\Lambda \underset{\varepsilon \rightarrow 0}{\longrightarrow} + \infty$, and
    \begin{equation}
        \begin{split}
            K  = \tr{P} = \sum_{k \in 2 \pi \mathbb{Z}^2} \mathbf{1}_{|k|^2+1 \leq \Lambda} \underset{\Lambda \rightarrow + \infty}{ \longrightarrow} + \infty \ .
        \end{split}
    \end{equation}
    We hence obtain
    \begin{equation}
        \left \lVert e^{-W^\varepsilon_K} -  e^{-V} \right \lVert_{L^1(\textnormal{d}\mu_0)} \underset{\lambda \rightarrow 0^+}{\longrightarrow} 0
    \end{equation}
    and the result follows. \\
\end{proof}

\newpage


\end{document}